\documentclass[reqno]{amsart}

\usepackage{amsmath,amssymb,amscd,accents,enumitem}
\usepackage{graphicx}
\usepackage[utf8]{inputenc} 

\usepackage[T2A,T1]{fontenc}

\usepackage{etoolbox}
\pretocmd{\section}{\numberwithin{Cor}{section}}{}{}
\pretocmd{\subsection}{\numberwithin{Cor}{subsection}}{}{}
\pretocmd{\subsubsection}{\numberwithin{Cor}{subsubsection}}{}{}

\DeclareSymbolFont{cyrillic}{T2A}{cmr}{m}{n}
\SetSymbolFont{cyrillic}{bold}{T2A}{cmr}{bx}{n}
\DeclareMathSymbol{\B}{\mathord}{cyrillic}{193}
\def\so{\raisebox{.5ex}{\scalebox{0.6}{\#}}\kern-.02em{}o}

\DeclareGraphicsExtensions{.eps} \usepackage{mathrsfs}
\usepackage[mathcal]{eucal} 
\usepackage{esint}
\usepackage[numbers,sort]{natbib}

\usepackage[pdfencoding=unicode, psdextra, pdfusetitle]{hyperref}

\usepackage{cleveref}

\usepackage{bookmark}
\renewcommand{\paragraph}[1]{\medskip\noindent#1}
\renewcommand{\subparagraph}[1]{\medskip\noindent\textit{#1}}
\newcommand{\mycite}[2]{\cite[#1]{#2}}

\newtheorem{Thm}{Theorem}{\bfseries}{\itshape}
{\bfseries}{\itshape}
\newtheorem{Cor}{Corollary}{\bfseries}{\itshape}
\newtheorem{ThmCor}[Cor]{Theorem}{\bfseries}{\itshape}
\newtheorem{Prop}[Cor]{Proposition}{\bfseries}{\itshape}
{\bfseries}{\itshape}
\newtheorem{Lem}[Cor]{Lemma}{\bfseries}{\itshape}
{\bfseries}{\itshape}
\newtheorem{Conj}[Thm]{Conjecture}{\bfseries}{\itshape}
\newtheorem{Def}[Cor]{Definition}{\bfseries}{\rmfamily}
{\bfseries}{\rmfamily}
{\scshape}{\rmfamily}
\newtheorem{Rem}[Cor]{Remark}{\scshape}{\rmfamily}
\newtheorem*{Rem*}{Remark}{\scshape}{\rmfamily}
{\scshape}{\rmfamily}
{\bfseries}{\itshape}

\newcommand{\iref}[2][]{%
  \def\tempA##1 ##2\relax{%
    \ifstrequal{##1}{Condition}%
        {Condition~(\ref{#1#2})}{%
    \ifstrequal{##1}{Assumption}%
        {Assumption~(\ref{#1#2})}{%
    \ifstrequal{##1}{Property}%
        {Property~(\ref{#1#2})}{%
    \ifstrequal{##1}{Case}%
        {Case~(\ref{#1#2})}{%
    \ifstrequal{##1}{Equation}%
        {\eqref{#1#2}}{%
    \ifstrequal{##1}{Section}
        {\S\ref{#1#2}}{%
    \ifstrequal{##1}{Subsection}
        {\S\ref{#1#2}}{%
    \ifstrequal{##1}{Subsubsection}
        {\S\ref{#1#2}}%
        {##1~\ref{#1#2}}%
  }}}}}}}}%
  \expandafter\tempA\string#2\relax
}

\def\size{{\operatorname{size}}}

\begin{document}

\title{Counting theorems for algebraic relations}

\author[G.~Binyamini]{Gal Binyamini}
\address{Weizmann Institute of Science, Rehovot, Israel}
\email{gal.binyamini@weizmann.ac.il}
\thanks{Funded by the European Union (ERC, SharpOS, 101087910), and by the ISRAEL SCIENCE FOUNDATION (grant No. 2067/23).}

\author[N.~Hirata-Kohno]{Noriko Hirata-Kohno}
\address{Nihon University, College of Science and Technology, Tokyo, Japan}
\email{hiratakohno.noriko@nihon-u.ac.jp}

\author[M.~Kawashima]{Makoto Kawashima}
\address{Meiji Gakuin University, Yokohama, Japan}
\email{kawashima\_makoto@mi.meijigakuin.ac.jp}

\author[Y.~Salant]{Yuval Salant}
\address{Weizmann Institute of Science, Rehovot, Israel}

\subjclass[2020]{Primary 11J85, 03C64; Secondary 11G50, 34M15, 14G05}

\keywords{o-minimality, algebraic independence, differential equations, Pila-Wilkie counting theorem, transcendence theory}

\date{\today}

\begin{abstract}
Let $X$ be a set definable in a sharply o-minimal structure. We consider the problem of counting the number of points where $X$ intersects algebraic varieties $V$ over $\mathbb Q$ of dimension $k<\operatorname{codim} X$, as a function of $T:=\deg V+\operatorname{h}(V)$, where $\operatorname{h}(V)$ is the \emph{log}-height of $V$. In particular, we conjecture that after removing a suitable "algebraic part", this number grows polynomially in $T$ -- a generalization of Wilkie's conjecture. We show that this full conjecture implies some open problems in algebraic independence theory. We also formulate a weaker conjecture stating that all intersections above are contained in a $\operatorname{poly}(T)$ amount of balls of radius $e^{-T}$.

  We then consider the case where $X\subseteq\mathbb C^n$ is a (compact piece of a) trajectory of a polynomial differential equation satisfying a variant of Nesterenko's $D$-property. Our main theorem is a proof of the weakened conjecture for such curves when $k<\sqrt n-1$.
\end{abstract}
\maketitle

\section{Introduction}

\subsection{Synopsis}

In \cite{pw:thm} Pila and Wilkie introduced the \emph{Pila-Wilkie
  counting theorem}. Roughly, this states that if a set definable in an
o-minimal structure contains no positive-dimensional semialgebraic
subsets, then it can only contain a small number of rational points ---
\emph{sub-polynomial} in the Height. This was later extended in many
directions, including generalizing rational points to algebraic points
of fixed degree \cite{pila:blocks}.  These counting theorems have had
numerous applications in arithmetic geometry and functional
transcendence, see~\cite{pila:book} for a modern account. The original
Pila-Wilkie paper also included a conjecture known as \emph{Wilkie's
  conjecture}, stating that the Pila-Wilkie asymptotic can be improved
in some natural structures (originally $\mathbb R_{\exp}$) to
\emph{polylogarithmic} in the Height. This has been proven in the
original context and in more general structures in
\cite{me:pfaff-wilkie}.

The intuition that underpins the various counting theorems
(Pila-Wilkie, Wilkie's conjecture and various generalizations) is that
\emph{geometry governs arithmetic}: we do not expect to see many
arithmetic (e.g. rational) solutions to a system of equations unless
there is a geometric reason. The same philosophy suggests a
conjectural strengthening of the counting theorems for sets of
codimension greater than one. Namely, in the absence of natural
geometric obstructions, one expects that a set $X\subseteq\mathbb R^n$ should
meet very few algebraic subvarieties of dimension $k<\operatorname{codim} X$, as a
function of the degree and log-height of the varieties. Such a conjecture
is to algebraic independence theory as the Wilkie conjecture is to
transcendence theory, with the classical case being recovered when
$k=0$.

In this paper we precisely formulate the conjecture above and some
weaker open variants. We show that the full conjecture implies some
classical open problems in algebraic independence theory of
multiplicative groups. Our main result, Theorem~\ref{thm:main},
is proof of the weaker variant of the conjecture where
$X$ is equal to $\Gamma\subseteq\mathbb C^n$, the image of a trajectory of a polynomial differential
equation with a tameness condition (see
Definition~\ref{def:arith-d-prop}) and $k<\sqrt n-1$. The proof of
this result rests upon a result from a previous paper,
Theorem~\ref{Theorem order-bound}, which provides an essentially optimal
lower bound for the maximum of the norm of a polynomial $P$ restricted to
$\Gamma$ in terms of $\deg(P)+\operatorname{h}(P)$, where $\operatorname{h}(P)$ is the \emph{log}-height of $P$.

\subsection{Counting theorems: known and conjectural}

Let $X\subseteq\mathbb R^n$ be definable in some sharp o-minimal (\so-minimal) structure. We refer the reader to \cite{me:pfaff-wilkie,me:sharp-os-cells} for an introduction to this notion. The notion of \so-minimality was introduced in \cite{me:pfaff-wilkie} to provide an axiomatic framework for o-minimal structures where one can expect to prove polylogarithmic counting theorems. Before this formalism existed, Wilkie conjectured that a polylogarithmic counting theorem should hold in the structure ${\mathbb R}_{\exp}$, and the unfamiliar reader may consider this as an illustrative example. Roughly speaking, \so-minimality assigns to each definable subset two parameters called \emph{format} and \emph{degree} and asserts certain bounds on the geometric complexity of a set -- for instance on the number of connected components -- in terms of its format and degree. Since our goal is to formulate conjectures concerning more general polylogarithmic counting theorems, \so-minimality is the natural context in which to formulate our conjectures. Wilkie's conjecture (now theorem \cite{me:pfaff-wilkie}) is formulated below as a particular case of our more general conjecture.

For
$k=0,\ldots,n-1$ we denote by $\mathcal C^n_k$ the collection of all
varieties of $\mathbb C^n$ of dimension $k$ and by
$\mathcal C^n_{k,g,h}\subseteq\mathcal C^n_k$ the subset consisting of varieties over $\overline{\mathbb Q}$ of
degree at most $g$ and log-height at most $h$. Denote
\begin{equation*}
  X(k,g,h) := \{ x\in X : x\in W \text{ for some } W\in \mathcal C^n_{k,g,h}\}.
\end{equation*}
We note that $X(0,g,h)$ is simply the collection of all algebraic
points of degree $\leq g$ and log-height $\leq h$ in $X$, usually denoted
$X(g,h)$. Our goal is to count, under appropriate restrictions, the
number of points in $X(k,g,h)$.

To state our conjectures we also need a variant of the
\emph{algebraic} and \emph{transcendental} parts used in the
Pila-Wilkie theorem. For $k=0,\ldots,n-\dim X-1$, let
\begin{multline}
  \quad X^{\operatorname{alg}(k)} := \{ x\in X : x\in W \text{ for some }\\
  W\in\mathcal C^n_{k+1} \text{ with } \dim_x(X\cap W) \ge 1 \}.\quad
\end{multline}
The dimension above is the real dimension of $X\cap W$ at $x$, in the
sense of \so-minimality. In other words, this is the locus where $X$
intersects a variety of dimension $k+1$ in a positive-dimensional
locus of unexpectedly high dimension. In particular
$X^{\operatorname{alg}(0)}=X^\mathrm{alg}$ is the classical algebraic part in the sense of
Pila-Wilkie. We similarly denote
\begin{equation*}
  X^{\operatorname{trans}(k)}:=X- X^{\operatorname{alg}(k)}.
\end{equation*}
With these notations in place we can now state our main conjecture. Even though
we state the conjecture in the language of \so-minimal structures, the reader unfamiliar with this setup may consider the \so-minimal structure $\mathbb R_{\exp}$ as a representative example.

\begin{Conj}\label{conj:strong}
  Let $X\subseteq\mathbb R^n$ be definable in a \so-minimal structure
  with sharp derivatives and let $k=0,\ldots,n-\dim X-1$. Then, for every
  $g,h\in\mathbb N$,
  \begin{equation}\label{eq:conj-strong}
    \# X^{\operatorname{trans}(k)}(k,g,h) = \operatorname{poly}_X(g,h).
  \end{equation}
\end{Conj}

\begin{Rem}\label{rem:conj-strong-exp}
    In analogy with the known cases of Wilkie's conjecture, it is natural to conjecture an even stronger condition: namely that if $X$ has format $\mathcal{F}$ and degree $D$ in the \so-minimal structure then the polynomial appearing in the right hand side of~\eqref{eq:conj-strong} can be replaced by $\operatorname{poly}_{\mathcal{F}}(D,g,h)$. To keep in line with Wilkie's original formulation we have chosen to state a less precise form in Conjecture~\ref{conj:strong}.
\end{Rem}

Conjecture~\ref{conj:strong} with $k=0$ reduces to the
polylogarithmic counting theorem
\begin{equation*}
  \#X^{\operatorname{trans}}(g,h)= \operatorname{poly}_X(g,h).
\end{equation*}
This case was proved in \cite{me:pfaff-wilkie}, and is itself a mild generalization of
Wilkie's original conjecture. Another motivation for
Conjecture~\ref{conj:strong} is the following theorem, where we assert
polynomial dependence on $h$ but not on $g$.

\begin{Thm}\label{thm:count-fixed-g}
  Suppose $X\subseteq\mathbb R^n$ is definable in a \so-minimal structure $\mathcal{S}$ with sharp
  derivatives. Then for fixed $g\in\mathbb N$ and $k=0,\ldots,n-\dim X-1$ it
  must satisfy
  \begin{equation}\label{Equation count-fixed-g}
    \#X^{\operatorname{trans}(k)}(k,g,h) \le \operatorname{poly}_{X,g}(h).
  \end{equation}
\end{Thm}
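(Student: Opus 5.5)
The idea is to reduce to the known case $k=0$, the polylogarithmic counting theorem of \cite{me:pfaff-wilkie} in its block/family form, by parametrizing the varieties in $\mathcal C^n_{k,g,h}$ through their Chow coordinates. Fix $g$. The varieties of dimension $k$ and degree at most $g$ in $\mathbb C^n$ form finitely many families, and the number of families depends only on $n,k,g$. Each family is parametrized by a constructible subset $C$ of some $\mathbb C^N$ of Chow forms, with $N=N(n,k,g)$. A variety $W$ over $\overline{\mathbb Q}$ of log-height at most $h$ has Chow point $c_W$ which is an algebraic point of $C$. Its degree over $\mathbb Q$ is bounded in terms of $g,n,k$ only, provided we restrict to $W$ defined over a number field of bounded degree; since the statement allows $\operatorname{poly}_{X,g}$, we may treat this degree as a constant. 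Its height is $O_{g}(h)$, using the standard comparison between the height of a variety and the height of its Chow form.

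Next I form the definable incidence set
\[ Z:=\{(x,c)\in X\times C_{\mathbb R}: x\in W_c\}, \]
using real and imaginary parts, so that $Z$ is definable in $\mathcal S$ with format and degree depending on $X$ and $g$. A point $x\in X(k,g,h)$ yields $(x,c_W)\in Z$ with $c_W$ algebraic of bounded degree and height $O(h)$. However, $x$ itself need not be algebraic, so I cannot count algebraic points of $Z$ directly. Instead I count the parameters: let
\[ \pi(Z)_h:=\{c: W_c\cap X\neq\emptyset\}. \]
Because $k<\operatorname{codim}X$, for generic $c$ the intersection $W_c\cap X$ is empty. For $c$ in the image, the fibre $W_c\cap X$ is finite away from $X^{\operatorname{alg}(k)}$. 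Indeed, if $\dim_x(W_c\cap X)\ge1$, then $x$ lies on $W_c\subset W'$ for some $(k+1)$-dimensional $W'$, for instance a cone or join over $W_c$, and hence $x\in X^{\operatorname{alg}(k)}$. By uniform finiteness in o-minimal structures, together with the sharp bounds of \so-minimality, the number of isolated points of $W_c\cap X$ is bounded by a constant $M_{X,g}$. It therefore suffices to count algebraic points of bounded degree and height $O(h)$ in the definable set $P:=\pi(Z)\subset\mathbb R^{2N}$, after removing an algebraic part, and to multiply the result by $M_{X,g}$.

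Applying the $k=0$ theorem to $P$ gives $\#P^{\operatorname{trans}}(O_g(1),O(h))=\operatorname{poly}_{X,g}(h)$. The main obstacle is the algebraic part $P^{\mathrm{alg}}$. Here one has to show that if $c$ lies on a real semialgebraic arc $\gamma\subset P$, then the points of $W_c\cap X$ not in $X^{\operatorname{alg}(k)}$ are controlled. The plan for this step is as follows. Along the arc, the union $\bigcup_{c'\in\gamma}W_{c'}$ is contained in the Zariski closure of a one-parameter algebraic family of $k$-dimensional varieties, which is a variety $V$ of dimension $k+1$. The intersection $X\cap V$ contains the points $x(c')$ for $c'\in\gamma$, chosen definably. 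If $x(c')$ is nonconstant, then $\dim(X\cap V)\ge1$ near these points, so they lie in $X^{\operatorname{alg}(k)}$. If $x(c')$ is constant, only a single point of $X$ is involved, and it is counted once. Making this rigorous requires choosing a definable selection of intersection points, which is possible by definable choice, and handling the finitely many branches. This is where I expect the real work: one must verify that the $k=0$ counting theorem in its block form \cite{pila:blocks,me:pfaff-wilkie} covers $P^{\mathrm{alg}}$ via $\operatorname{poly}(h)$ many semialgebraic blocks of bounded complexity. One must then argue that each block contributes either points already in $X^{\operatorname{alg}(k)}$ or at most $M_{X,g}$ points.
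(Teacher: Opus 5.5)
Your overall architecture is the paper's. You use Chow coordinates, the definable set $P$ of parameters $c$ with $W_c\cap X\neq\emptyset$, the block form of the counting theorem \cite[Theorem~2]{me:pfaff-wilkie} applied to the points of $P$ of height at most $h$, and a semialgebraic arc in a block sweeping out a $(k+1)$-dimensional variety. The gap is in the step you defer. The dichotomy you propose there (each block contributes either points of $X^{\operatorname{alg}(k)}$ or at most $M_{X,g}$ points) is false once $c$ ranges over the whole block. The bound $M_{X,g}$ holds fibrewise. However, a definable selection $c\mapsto x(c)$ is continuous only on the cells of a decomposition of $B$. Cells of dimension $<\dim B$ need not be semialgebraic, so no semialgebraic arc stays inside them, and your arc argument says nothing there.

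For instance, take $n=3$, $k=g=1$, and $X$ the image of $t\mapsto(t,e^t-1,e^{e^t-1}-1)$, $t\in[0,1]$. No algebraic surface contains an arc of $X$, so $X^{\operatorname{trans}(1)}=X$. The lines through $p=0\in X$ form a semialgebraic family $L_p\subseteq P$ containing many rational lines. Nothing in the block theorem prevents a block $B$ from being an open piece of $L_p$. Inside $B$, the secant lines through $p$ form a non-semialgebraic curve $S$, along which a second intersection point moves continuously through $X^{\operatorname{trans}(1)}$. So $B$ accounts for infinitely many such points. Yet any semialgebraic arc meets $S$ in only finitely many points, and along it the only continuous selection is the constant one, $x\equiv p$.

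What is missing is the paper's induction on dimension. By \so-minimality, cover the relevant intersection points over $B$ by the images of $\operatorname{poly}_{X,g}(h)$ continuous maps $f_\sigma:S_\sigma\to X$. Here the $S_\sigma\subseteq B$ are smooth definable cells, and everything has format $O_{X,g}(1)$ and degree $\operatorname{poly}_{X,g}(h)$.

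If $\dim S_\sigma=\dim B$, then $S_\sigma$ is locally open in a smooth semialgebraic set. Your arc argument then puts every $f_\sigma(s_0)$ with $f_\sigma$ not locally constant at $s_0$ into $X^{\operatorname{alg}(k)}$. The remaining values number $\operatorname{poly}_{X,g}(h)$ by counting connected components, not $M_{X,g}$.

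If $\dim S_\sigma<\dim B$, you must return to arithmetic. Only parameters of height at most $h$ matter, so apply the block counting theorem to $S_\sigma$ itself and recurse on the dimension of the parameter set. The recursion has depth at most $\dim P$. Each $S_\sigma$ has bounded format and polynomial degree, and the sharp counting theorem is polynomial in the degree, so the total stays $\operatorname{poly}_{X,g}(h)$. In the example above, this is exactly the step that counts the rational secant lines through $p$, by applying the counting theorem to the curve $S$.
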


This theorem is proved in \iref{Section count-fixed-g}. However, the
full Conjecture~\ref{conj:strong} appears to be far more
challenging. In particular we show in \iref{Section conj-trans-applications} that it
implies some open problems in algebraic independence theory for
multiplicative groups. Since the full conjecture seems to be out of
reach, we formulate a weaker form of the conjecture as a benchmark for
progress in this direction.

\begin{Conj}\label{conj:weak}
  Let $X\subseteq\mathbb R^n$ be definable in a \so-minimal structure
  with sharp derivatives and $k=0,\ldots,n-\dim X-1$. Then, for every
  $g,h\in\mathbb N$,
  \begin{equation*}
    X^{\operatorname{trans}(k)}(k,g,h) \subseteq \bigcup_\eta B_\eta
  \end{equation*}
  where $\{B_\eta\}_\eta$ is a collection of a $\operatorname{poly}_X(g,h)$ amount of balls of radius
  $e^{-(g+h)}$.
\end{Conj}

Existing techniques in algebraic independence theory seem to be more
suitable for establishing results in the spirit of
Conjecture~\ref{conj:weak} than the full
Conjecture~\ref{conj:strong}. Nevertheless, even
Conjecture~\ref{conj:weak} seems to be quite challenging.

\subsection{Preliminaries}
\label{Subsection prelim}

Before stating our main results we introduce some notations. Let $K$
be a number field and let $\xi$ denote a polynomial vector field on
$\mathbb C^n$ defined over $K$,
\begin{equation*}
  \xi = \sum \xi_i \frac{\partial}{\partial x_i}, \qquad \xi_i\in K[x_1,\ldots,x_n].
\end{equation*}
Let $D_r:=\{z\in\mathbb C:|z|<r\}$ be the open disk in $\mathbb C$ and let $\gamma:\overline{D_1}\to\mathbb C^n$ be a compact trajectory of $\xi$ which is analytic on $D_1$ and continuous on $\overline{D_1}$ (for our purposes, being a trajectory means that $\forall z\exists\alpha_z\in\mathbb C$ such that $\xi(\gamma(z))=\alpha_z\cdot\gamma'(z)$). We define $\Gamma:=\gamma(\overline{D_\frac1e})$ to be the radius $\frac1e$ image of this trajectory.

For $p\in\mathbb Z[x_1,\ldots,x_n]$ and $V\subseteq\mathbb C^n$ a pure-dimensional variety defined over $K$ (equivalently, we may look at the ideal $I_V$ that generates $V$), we denote
\begin{align}
  \operatorname{t}(p) &:= \deg p+\operatorname{h}(p), & \operatorname{t}(V)&:=\deg V+\operatorname{h}(V)
\end{align}
where $\deg(\cdot),\operatorname{h}(\cdot)$ denote the degree and the log of the Height (log-height) respectively (the Height of a polynomial over $\mathbb Z$ is the maximum modulus of its coefficients, and the Height of a variety is the Height of its Chow form). We may sometimes use these notations when $p^\mathrm{proj}\in\mathbb Z[X_0,...,X_n]$ is homogeneous or $V^\mathrm{proj}\subseteq\mathbb{P}^n(\mathbb C)$ as well (preserving the following identifications between projective and affine: $p(x_1,...,x_n):=p^\mathrm{proj}(1,x_1,...,x_n)$ and $V^\mathrm{proj}:=$ the projective closure of $V$). In the following definition we use the notation $|V(\omega)|$ with $\omega\in\mathbb C^n$ for the \emph{absolute value of evaluating} $V$ \emph{at} $\omega$, see \cite[Chapter 3, Definition 4.6]{NesterenkoPhilippon2001IntroductionToAlgebraicIndependenceTheory} for the definition (for the projectivization of the corresponding ideal that generates $V$, evaluated at $(1:\omega)$). This is an elimination theoretic notion that generalizes the idea of substituting $\omega$ in the polynomial generating a hypersurface to varieties of larger codimension (independently of the normalization of this polynomial).

\begin{Def}[Order along $\Gamma$]\label{def:order}
  We define the \emph{order} of a polynomial or variety along $\Gamma$ as
  \begin{align}
    \operatorname*{ord}_\Gamma p &:= -\log \max_{\omega\in\Gamma}\{|p(\omega)|\}, &  \operatorname*{ord}_\Gamma V &:= -\log \max_{\omega\in\Gamma}\{|V(\omega)|\}.
  \end{align}
\end{Def}

Our result will apply to vector fields satisfying the following
property inspired by Nesterenko's $D$-property.

\begin{Def}[Arithmetic $D$-property]\label{def:arith-d-prop}
  We say that $\xi,\Gamma$ satisfy the \emph{arithmetic $D$-property over $K$} if
  there exists a constant $C$ such that for every irreducible
  $K$-variety $V\subseteq\mathbb C^n$ which is contained in a non-trivial
  $\xi$-invariant variety,
  \begin{equation*}
    \operatorname*{ord}_\Gamma V \le C\cdot \operatorname{t}(V)^{\frac{n}{\operatorname{codim} V}}.
  \end{equation*}
\end{Def}

The arithmetic $D$-property automatically holds, for instance, when the
invariant subvarieties of $\xi$ are contained in finitely many
hypersurfaces, as was shown in \cite[\S 6.1.2]{BinyaminiSalant2026Technical}. This is the setup that appears
in Nesterenko's more classical $D$-property. More generally we have the
following criterion.

\begin{Prop}[\mycite{Proposition 6.1.2.1}{BinyaminiSalant2026Technical}]\label{Proposition d-prop-criterion}
  Suppose there exists a constant $C$ such that for every irreducible $K$-variety $V\subseteq\mathbb C^n$ which is contained in a non-trivial $\xi$-invariant variety,
  \begin{equation*}
    -\log\max_{\omega\in\Gamma}\operatorname{dist}(\omega,V) \leq C \cdot \operatorname{t}(V)^{\frac n {\operatorname{codim} V}-1}.
  \end{equation*}

where $\operatorname{dist}(\omega,V):=\min_{x\in V}\|x-\omega\|_{\infty}$.

Then $\xi,\Gamma$ satisfy the arithmetic $D$-property over $K$.
  
\end{Prop}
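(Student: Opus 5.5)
The plan is to use a Liouville-type inequality for the evaluation $|V(\omega)|$ in terms of the distance from $\omega$ to $V$. This reduces the distance hypothesis to the order bound in Definition~\ref{def:arith-d-prop}. The natural tool is the comparison between $|V(\omega)|$ and $\operatorname{dist}(\omega,V)$ from elimination theory (Philippon, Jadot; see \cite[Chapter 3]{NesterenkoPhilippon2001IntroductionToAlgebraicIndependenceTheory}). For a pure-dimensional $V$ of codimension $r$ and a point $\omega$ in a bounded region, it gives
\begin{equation*}
  \log|V(\omega)| \ge \deg V\cdot\log\operatorname{dist}(\omega,V) - c_n\bigl(\operatorname{h}(V)+\deg V\bigr),
\end{equation*}
up to normalization: projective versus affine distance and the factor $\log\max(1,\|\omega\|)$. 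Since $\Gamma=\gamma(\overline{D_{1/e}})$ is compact, $\|\omega\|$ is bounded uniformly on $\Gamma$, so this normalization only contributes a factor $O_\Gamma(\deg V)$. A useful sanity check is the hypersurface case: $|p(\omega)|$ is bounded below by the distance to the zero set raised to the power $\deg p$, times $H(p)^{-O(1)}$.

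Fix an irreducible $K$-variety $V$ contained in a nontrivial invariant variety, and pick $\omega_0\in\Gamma$ maximizing $\operatorname{dist}(\omega,V)$. By hypothesis,
\begin{equation*}
  -\log\operatorname{dist}(\omega_0,V)\le C\,\operatorname{t}(V)^{n/\operatorname{codim} V-1}.
\end{equation*}
Applying the inequality at $\omega_0$ gives
\begin{equation*}
  \operatorname*{ord}_\Gamma V\le -\log|V(\omega_0)| \le \deg V\cdot C\,\operatorname{t}(V)^{n/\operatorname{codim}V-1}+O(\operatorname{t}(V)).
\end{equation*}
Using $\deg V\le\operatorname{t}(V)$, the first term is at most $C\,\operatorname{t}(V)^{n/\operatorname{codim}V}$. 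Since $n/\operatorname{codim}V\ge1$, the error term $O(\operatorname{t}(V))$ is absorbed after enlarging $C$, which yields the arithmetic $D$-property with a new constant.

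Two technical points remain. First, $V$ is a $K$-variety that may be reducible over $\mathbb C$, and $|V(\omega)|$ is defined via the ideal over $K$. For a product over conjugates or components, the Liouville inequality is applied to the component nearest $\omega$, and the other factors are bounded above trivially by $e^{O(\operatorname{t}(V))}$. Alternatively, the Philippon form of the inequality already handles $K$-irreducible cycles directly. Second, the heights used here must match the normalization of $\operatorname{h}(V)$ via the Chow form, including the archimedean contributions when $[K:\mathbb Q]>1$. This only changes constants, which depend on $K$ and $n$.

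The main obstacle is citing the correct form of the distance–evaluation inequality with the right exponent. In some versions, $\log|V(\omega)|\ge \deg V\log\operatorname{dist}-O(\operatorname{t}(V))$ holds only when $\operatorname{dist}$ is small relative to $e^{-\operatorname{t}(V)}$. In the complementary regime, $\operatorname{dist}(\omega_0,V)$ is large, and I would instead use a cruder bound: $-\log|V(\omega_0)|=O(\operatorname{t}(V))$ once $\omega_0$ is at least $e^{-O(\operatorname{t}(V))}$ away from $V$. The exponent $n/\operatorname{codim}V\ge1$ again absorbs this.
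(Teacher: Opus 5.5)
The paper itself gives no proof of this proposition; it is quoted from the companion paper, so there is nothing in the text to compare with line by line. Your main line is correct, and it is almost certainly the intended one. You take $\omega_0\in\Gamma$ maximizing $\operatorname{dist}(\omega,V)$ and apply a lower bound of the form $\log|V(\omega)|\ge c_1\deg V\cdot\log\min(1,\operatorname{dist}(\omega,V))-c_2\operatorname{t}(V)$, valid on the compact set $\Gamma$. This loses exactly one factor $\deg V\le\operatorname{t}(V)$, which is what the ``$-1$'' in the exponent of the hypothesis pays for. Only $c_1=O_n(1)$ matters, so a version with exponent $(\dim V+1)\deg V$ works just as well.

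Two of your side remarks are wrong and should be dropped.

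\emph{The fallback in your last paragraph is false.} The bound $\operatorname{dist}(\omega_0,V)\ge e^{-O(\operatorname{t}(V))}$ does not give $-\log|V(\omega_0)|=O(\operatorname{t}(V))$, because many sheets of $V$ can sit at that distance from $\omega_0$. For example, $p=x_1^{m^2}-2(3^mx_1-1)^m$ is irreducible (Eisenstein at $2$) and has $\operatorname{t}(p)\asymp m^2$. It has exactly $m$ roots within $3^{-m}/2$ of $3^{-m}$, all at distance $\asymp 3^{-m(m+1)}$ from it. At $\omega=(3^{-m},0,\ldots,0)$ one therefore gets $\operatorname{dist}(\omega,V)=e^{-O(\operatorname{t}(p))}$, but $-\log|V(\omega)|\asymp m^3\asymp\operatorname{t}(p)^{3/2}$. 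In that regime you only get $O(\deg V\cdot\operatorname{t}(V))$, which is the same inequality again.

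\emph{The treatment of components runs the wrong way.} For a $K$-irreducible $V$ that splits over $\mathbb C$, bounding the other factors \emph{above} cannot help a lower bound for the product. What works is that every $\mathbb C$-component lies at distance $\ge\operatorname{dist}(\omega,V)$. Each factor then obeys the same lower bound, and the exponents add up to $\deg V$.

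Neither patch is needed, because the inequality holds for every $\omega$ and for cycles directly. Fix all but one group of variables of the Chow form of $\overline V$. The form then factors as $c\prod_{j=1}^{\deg V}\langle u,\beta_j\rangle$ with $\beta_j\in\overline V$. On the hyperplane of linear forms vanishing at $\boldsymbol\omega=(1:\omega)$, the functional $u\mapsto\langle u,\beta_j\rangle$ has norm $\|\beta_j\|\operatorname{Dist}(\boldsymbol\omega,\beta_j)$. By the Mahler measure of a product of linear forms, restricting that group to this hyperplane therefore costs at most a factor $e^{-O(\deg V)}\operatorname{Dist}(\boldsymbol\omega,\overline V)^{\deg V}$ in sup-norm. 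Repeating this for each of the $\dim V+1$ groups, and comparing sup-norms with heights of coefficients, gives $\log|V(\omega)|\ge(\dim V+1)\deg V\cdot\log\operatorname{Dist}(\boldsymbol\omega,\overline V)-O(\operatorname{t}(V))$. Combined with your affine-to-projective comparison on $\Gamma$, this completes your argument with no case distinction.
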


\iref{Proposition d-prop-criterion} can be used to verify the
arithmetic $D$-property in various cases where a good description of the
$\xi$-invariant varieties is available thanks to Ax-Schanuel type
functional transcendence statements.

\subsection{Main result on counting relations}

Our main theorem can now be stated as follows.

\begin{Thm}[see \iref{Theorem main theorem} and \iref{Remark main theorem} for more details]\label{thm:main}
  Let $\xi,\gamma$ be as in \iref{Subsection prelim} satisfying the
  arithmetic $D$-property over $K$. Then for any $\rho\geq 1,k<\sqrt n-1$,
  \begin{equation*}
    \Gamma(k,g,h) \subseteq \bigcup_\eta B_\eta
  \end{equation*}
  where $\{B_\eta\}_\eta$ is a collection of a $(g+h)^{\rho\cdot C_{\xi,\gamma,K}}$ amount of balls of radius $e^{-(g+h)^\rho}$ for some constant $C_{\xi,\gamma,K}$ depending only on $\xi,\gamma,K$.
  
\end{Thm}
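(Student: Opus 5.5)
Set $T:=g+h$ and fix $\rho\ge 1$. The plan is to turn every point of $\Gamma(k,g,h)$ into a point of $\Gamma$ where some integer polynomial of controlled size is extremely small. After that, the lower bound on orders along $\Gamma$ (the result quoted as Theorem~\ref{Theorem order-bound}) together with the arithmetic $D$-property should limit how many such "deep" points exist, up to clustering at scale $e^{-T^\rho}$.

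\textbf{Step 1: from varieties to small polynomials.} Let $\omega=\gamma(z)\in\Gamma$ lie on some $W\in\mathcal C^n_{k,g,h}$. Replacing $W$ by the union of its Galois conjugates gives a $\mathbb Q$-variety (indeed a $K$-variety) with $\operatorname{t}(W)=\operatorname{poly}(T)$. Its codimension is $n-k$, and $n-k>n-\sqrt n+1$ is large because $k<\sqrt n-1$.

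By a Siegel-lemma / Chow-form argument, for a parameter $N$ to be chosen we construct integer polynomials $P$ vanishing on $W$ with $\deg P\le N$ and $\operatorname{h}(P)\le \operatorname{poly}(T)\,N$. The count of available polynomials is roughly $N^n$, against about $N^{k}\deg W$ conditions, so such $P$ exist. Since $P(\omega)=0$, Cauchy estimates for $P\circ\gamma$ on $D_1$ show that $|P\circ\gamma|$ is at most $e^{-c\cdot\operatorname{ord}}$ on a disc around $z$. The relevant order is controlled by multiplicity and zeros estimates, which requires that $\Gamma\not\subseteq W$. This holds because the trajectory is not contained in a proper invariant variety, a situation already handled by the $D$-property clause.

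\textbf{Step 2: elimination / transference (the main obstacle).} The aim is to show that if many well-separated points (separated at scale $e^{-T^\rho}$) each lie on such a $W$, we obtain a contradiction. I would use a Philippon-type criterion for algebraic independence, run in elimination-theoretic form with the quantities $|V(\omega)|$ of Nesterenko--Philippon.

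Start from the polynomial $P$ of Step 1, whose zero set passes near many points of $\Gamma$. Intersect this hypersurface successively with further small polynomials. At each stage, pass to an irreducible component $V$ of codimension $j$ that is close to $\Gamma$, and keep track of $\operatorname{t}(V)$ and $\operatorname{ord}_\Gamma V$. The induction runs over codimension up to $n-k$.

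At each step there are two cases:
\begin{enumerate}
\item $V$ is contained in a $\xi$-invariant variety. Then the arithmetic $D$-property gives $\operatorname{ord}_\Gamma V\le C\operatorname{t}(V)^{n/\operatorname{codim}V}$.
\item $V$ is not so contained. Then the derivatives $\xi^m P$ supply a new polynomial that does not vanish identically on $V$, and we intersect with it. This is the usual Nesterenko multiplicity-lemma step.
\end{enumerate}

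Tracking exponents, each step multiplies $\operatorname{t}$ by roughly a power of $N$, while the order must remain at least of size $T^\rho\cdot(\text{number of clusters})$. Comparing this with the upper bound $\operatorname{t}^{n/\operatorname{codim}}$ and with Theorem~\ref{Theorem order-bound} gives an inequality. I expect this inequality to close only when $(k+1)^2<n$, which is exactly the condition $k<\sqrt n-1$. Getting these exponents right is where I expect the difficulty.

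\textbf{Step 3: counting clusters.} Cover $\overline{D_{1/e}}$ by discs of radius $e^{-T^\rho}/\operatorname{Lip}(\gamma)$. Call a disc \emph{bad} if it contains a point of $\gamma^{-1}(\Gamma(k,g,h))$. Feeding all bad discs simultaneously into Step 2 bounds their number: the total order grows linearly in the number of bad discs, since the zeros in separated discs add up by Jensen's formula applied to $P\circ\gamma$. That total is bounded above by $\operatorname{poly}(\operatorname{t})=T^{O(\rho)}$. So there are at most $T^{\rho C_{\xi,\gamma,K}}$ bad discs. Their images are the required balls, after enlarging the radius by the Lipschitz constant of $\gamma$, which can be absorbed by adjusting constants.
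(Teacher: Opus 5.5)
\textbf{There is a genuine gap.} The decisive step is left as an expectation in Step~2, and the mechanism you use in its place in Steps~1 and~3 cannot work.

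A polynomial $P$ built from a single $W\ni\omega$ only vanishes at $\omega$, generically to first order. So it is not small on $\Gamma$ in any useful sense.

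The Jensen count in Step~3 is circular. Suppose one $P$ is to vanish on $s$ varieties $W_i$ with $\operatorname{t}(W_i)\le T$. The Hilbert-function count then forces $\operatorname{t}(P)\gtrsim (sT)^{1/(n-k)}$. Jensen's formula together with Theorem~\ref{Theorem order-bound} only gives $s\lesssim\operatorname{t}(P)^n\approx (sT)^{n/(n-k)}$, which is vacuous. Zero counting alone never bounds the number of clusters, so your claim that the total order is $T^{O(\rho)}$ is unsupported.

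Your Step~2 also misses the target. The induction over codimension, with the $D$-property dichotomy and the derivatives $\xi^mP$, is the proof of the Lower Bound Theorem itself. That result is already available as a black box (Theorem~\ref{Theorem order-bound}, Lemma~\ref{Lemma lower bound on polynomial with disks}), and it does not replace the transference step.

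\textbf{What is missing} is the following chain from the paper.

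\emph{Auxiliary polynomial.} The abundance of points is used not for a contradiction but to \emph{construct} one polynomial. For each $N$ in a polynomial range $[R^{c_1},R^{c_2}]$, a pigeonhole argument with Nesterenko's Hilbert-function bound gives $p_N\in\mathbb Z[x]$ with $\operatorname{t}(p_N)\le N$. This $p_N$ vanishes simultaneously at about $N^{n-k-\alpha}$ points of $\Gamma(k,g,h)$. The Ilyashenko--Yakovenko zero lemma then makes it \emph{globally} tiny: $\max_\Gamma|p_N|\le e^{-cN^{n-k-\alpha}}$.

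\emph{Calibration.} Set $Q_N:=p_N^{i_N}$ with $i_N\lesssim N^{k+\alpha}$. Then $|Q_N|<e^{-N^n}$ on $\Gamma$ and $\operatorname{t}(Q_N)\lesssim N^{k+1+\alpha}$. Lemma~\ref{Lemma lower bound on polynomial with disks} with $h=e^{-N^\theta/C}$, combined with Theorem~\ref{Theorem order-bound}, gives $|Q_N(\omega)|>e^{-N^{n+\theta}}$ outside $\operatorname{poly}(N)$ exceptional discs of total diameter $e^{-N^\theta/C}$.

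\emph{Criterion.} A quantitative Philippon criterion (Proposition~\ref{Proposition modified transcendence criterion}) is applied pointwise to every $\omega$ off these discs. It shows that $\omega$ lies on no $k$-dimensional $\mathbb Q$-variety of degree and height at most $R$. Its hypothesis is roughly $\operatorname{t}(Q_N)^{k+1}\ll-\log|Q_N(\omega)|\approx N^n$, i.e.\ $(k+1)(k+1+\alpha)<n$. This is exactly where $k<\sqrt n-1$ enters.

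\emph{The balls.} The $B_\eta$ are these exceptional discs, summed over $N$, not bad cells of a fixed grid. The radius $e^{-(g+h)^\rho}$ then comes from re-indexing $T\mapsto T^a$ as in Remark~\ref{Remark main theorem}.
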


We note that since $\Gamma^{\operatorname{trans}(k)}\subseteq\Gamma$ then Theorem~\ref{thm:main}
establishes Conjecture~\ref{conj:weak} for the trajectory $\Gamma$
with $k<\sqrt n-1$.

The proof of Theorem~\ref{thm:main} is based on an application of
Philippon's criterion for algebraic independence
\cite[Chapter 8, Main criterion for algebraic independence (CIA)]{NesterenkoPhilippon2001IntroductionToAlgebraicIndependenceTheory}. As is often the case, a key to being able to
apply this criterion is proving very sharp lower bounds for
polynomials restricted to $\Gamma$. In the following section we
describe an essentially optimal result of this type which plays the
main role in the proof of Theorem~\ref{thm:main}.

\subsection{Order estimates for polynomials}

The result from our previous paper is an essentially optimal estimate for the
order of a polynomial along $\Gamma$. Throughout this section we let
$\xi,\Gamma$ be as in \iref{Subsection prelim} and assume that they satisfy
the arithmetic $D$-property over $K$. We also let $p$ denote a polynomial in
$K[x_1,\ldots,x_n]$, with a coefficient that is $\geq 1$ in absolute value.

\begin{ThmCor}[The Lower Bound Theorem, see \mycite{Corollary 6.3.2}{BinyaminiSalant2026Technical}]\label{Theorem order-bound}
  There is a constant $C$ depending only on $\xi,\gamma,K$ such that
  \begin{equation*}
    \operatorname*{ord}_\Gamma p \le C\cdot \operatorname{t}(p)^n.
  \end{equation*}
\end{ThmCor}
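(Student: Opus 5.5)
The plan is the classical Nesterenko-type multiplicity/order estimate argument, run in the arithmetic setting with the arithmetic $D$-property in place of the geometric one. Write $T=\operatorname{t}(p)$. We may assume $p$ is irreducible over $K$: if $p=\prod p_j$ then $\operatorname{ord}_\Gamma p\le\sum_j\operatorname{ord}_\Gamma p_j+O(T)$, since a maximum over $\Gamma$ of a product is at least the product of maxima up to Gelfond-type height corrections. Moreover $\sum_j \operatorname{t}(p_j)^n\le (\sum_j\operatorname{t}(p_j))^n\lesssim T^n$. Let $V_1=Z(p)$, a hypersurface with $\operatorname{t}(V_1)\lesssim T$ and $\operatorname{ord}_\Gamma V_1\approx\operatorname{ord}_\Gamma p$.

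We then run a descending induction on varieties $V_1\supset V_2\supset\dots$ with $\operatorname{codim}V_i=i$. We aim for the inductive bounds
\[
\operatorname{t}(V_i)\lesssim T^{i},\qquad \operatorname{ord}_\Gamma V_i\gtrsim \operatorname{ord}_\Gamma p - O(T^{n}).
\]
More precisely, the order at each step should drop by at most a controlled amount. At step $i$ we ask whether $V_i$ is contained in a nontrivial $\xi$-invariant variety. If it is, the arithmetic $D$-property gives
\[
\operatorname{ord}_\Gamma V_i\le C\operatorname{t}(V_i)^{n/i}\lesssim T^{n},
\]
and we stop. Otherwise, by the standard lemma, $V_i$ is not $\xi$-invariant, so some derivative $\xi^{m}q$ of a defining polynomial $q$ of $V_i$ (Chow-form chart) is not identically zero on $V_i$, with $m$ bounded by $\deg V_i$. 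We intersect $V_i$ with $Z(\xi^{m}q)$ and extract a component $V_{i+1}$ on which the order is still large. Here we use the Nesterenko–Philippon intersection estimate
\[
\operatorname{ord}V_{i+1}\gtrsim \operatorname{ord}V_i-(\text{order lost by differentiation}).
\]
The Cauchy estimate on the disk $D_1$ versus $D_{1/e}$ lets us transfer smallness of $q$ along $\Gamma$ to smallness of $\xi^m q$, losing only $O(m+\operatorname{t})$. Degrees and heights multiply by $O(T)$ per step, via the arithmetic Bézout theorem.

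If we reach $V_n$, a zero-dimensional variety with $\operatorname{t}(V_n)\lesssim T^n$, we apply Liouville's inequality for the evaluation of a $0$-dimensional cycle: its absolute value at a point of $\Gamma$ cannot be smaller than $e^{-O(\operatorname{t}(V_n))}$ unless $\Gamma$ passes through it, which is handled by a separate distance argument. This gives $\operatorname{ord}_\Gamma p\lesssim T^n$.

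The main obstacle is the bookkeeping in the intersection step. Differentiation on $\Gamma$ must not lose more order than we can afford, and we must choose the component carrying most of the order. This requires the "max over $\Gamma$" formulation, where the point realizing the maximum may move between steps. I would first try to fix a single point $\omega\in\Gamma$ nearly maximizing $|V_i(\omega)|$ and use analyticity of $\gamma$ on the larger disk to control all derivatives uniformly there, mirroring Nesterenko's proof of the multiplicity estimate.
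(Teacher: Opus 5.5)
Your outline has the right architecture: a Nesterenko-type descent $Z(p)=V_1\supset V_2\supset\cdots$, stopped either by the arithmetic $D$-property or by a Liouville estimate in dimension zero. However, the step that is supposed to give the sharp exponent $n$ is not justified, and as written it fails. You intersect $V_i$ with $Z(\xi^m q)$ where $q$ is a Chow-form chart of $V_i$. Such a $q$ has degree $\deg V_i$ and height comparable to $\operatorname{h}(V_i)$, so arithmetic B\'ezout only gives $\operatorname{t}(V_{i+1})\lesssim \operatorname{t}(V_i)^2$, not $T\cdot\operatorname{t}(V_i)$. Iterating gives $\operatorname{t}(V_i)\lesssim T^{2^{i-1}}$ and an exponent of order $2^{n-1}$ instead of $n$; the two agree only for $n\le 2$.

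The assertion that ``degrees and heights multiply by $O(T)$ per step'' is exactly where the content of the theorem lies. To keep $\operatorname{t}(V_{i+1})\lesssim T^{i+1}$ you need, at each step, a hypersurface of size $O(T)=O(\operatorname{t}(V_i)^{1/i})$ that does not contain $V_i$ but is still small along $\Gamma$. Neither natural candidate comes for free. The derivatives $\xi^j p$ of the original polynomial may vanish on $V_i$ for all $j$ up to an index that nothing in your argument bounds by $O(T)$; the $D$-property only guarantees that some such $j$ exists. And the elements of $I(V_i)$ of degree $O(T)$ may all cut out a strictly larger variety to which $\xi$ is tangent along $V_i$.

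This is the real difficulty of Nesterenko's method. It is why the natural inductive statement is the scale-invariant bound $\operatorname{ord}_\Gamma V\le C\,\operatorname{t}(V)^{n/\operatorname{codim}V}$ for \emph{all} irreducible $V$: note that $\operatorname{t}(V)^{(i+1)/i}$ raised to the power $n/(i+1)$ equals $\operatorname{t}(V)^{n/i}$. This is the same exponent as in Definition~\ref{def:arith-d-prop}. The paper obtains the present statement as the codimension-one case of that bound, Theorem~\ref{thm:variety-order-bound}, rather than by a single descent at the fixed scale $T$.

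The archimedean bookkeeping, which you correctly flag as the main obstacle, is also not in place, and your proposed remedy would not supply it. Because $\operatorname{ord}_\Gamma$ is defined through a maximum, it is not a valuation. For a cycle $V_i\cdot Z(Q)=\sum_j m_jW_j$ one has, up to normalization terms, only $\operatorname{ord}_\Gamma\bigl(\sum_j m_jW_j\bigr)\ge\sum_j m_j\operatorname{ord}_\Gamma W_j$. That is the wrong direction for ``choosing the component carrying most of the order'', since different components can be small on different parts of $\Gamma$. The reverse inequality needs a Cartan/Harnack-type estimate whose loss is controlled by Bernstein indices, i.e.\ by the orders themselves; this is the tool the paper says it applies systematically.

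Freezing one near-maximizing point $\omega$ does not help. You must show that $|V_{i+1}|$ is small at every point of $\Gamma$, and running the argument pointwise makes the chosen component depend on the point, which is the same non-additivity problem in another form. It also discards the uniform smallness on $\Gamma$ that any derivative estimate needs. A Cauchy estimate using $D_1$ only gives the upper bound $e^{O(\operatorname{t})}$, not smallness. One must first push smallness from $D_{1/e}$ to a slightly larger disc by a three-circle (Bernstein) inequality, which costs a multiplicative factor in the order rather than an additive $O(m+\operatorname{t})$.

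The same issue affects your reduction to irreducible $p$. Gelfond's inequality controls coefficients, not $\max_\Gamma$. The correct loss there is $O\bigl(\sum_j(\operatorname{ord}_\Gamma p_j+\operatorname{t}(p_j))\bigr)$, which still happens to suffice.
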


This is a specific case of a more general statement for orders of varieties along
$\Gamma$:

\begin{ThmCor}[The Lower Bound Theorem, \mycite{Theorem 7}{BinyaminiSalant2026Technical}]\label{thm:variety-order-bound}
  There is a constant $C$ depending only on $\xi,\gamma,K$ such that for
  any irreducible $K$-variety $V$,
  \begin{equation*}
    \operatorname*{ord}_\Gamma V \le C\cdot \operatorname{t}(p)^{\frac{n}{\operatorname{codim} V}}.
  \end{equation*}
\end{ThmCor}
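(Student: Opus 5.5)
The natural strategy is to split according to whether $V$ is contained in a non-trivial $\xi$-invariant variety, and to reduce the second case to the hypersurface estimate of \iref{Theorem order-bound} by elimination. (I read the right-hand side as $C\cdot\operatorname{t}(V)^{n/\operatorname{codim}V}$; the $\operatorname{t}(p)$ in the display appears to be a typo.)

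\emph{Invariant case.} If $V$ lies in a non-trivial $\xi$-invariant variety, the bound is literally the arithmetic $D$-property of Definition~\ref{def:arith-d-prop}, so nothing is needed. In particular the constant $C$ of the statement must be at least the constant from that definition.

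\emph{Generic case, $\operatorname{codim}V=1$.} Here $V=Z(p)$ for an irreducible $p$ with $\operatorname{t}(p)\asymp\operatorname{t}(V)$. By the standard comparison between the Chow form of a hypersurface and its equation, $|V(\omega)|$ equals $|p(\omega)|$ up to a factor $e^{O(\operatorname{t}(V))}$. Hence \iref{Theorem order-bound} gives the claim with exponent $n$.

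\emph{Generic case, $r:=\operatorname{codim}V\ge2$.} I would argue by induction on $r$.
\begin{enumerate}
\item Since $V$ is not contained in any invariant variety, the ideal $I_V$ is not $\xi$-stable. So for a generator $q\in I_V$ of degree $\le\deg V$ (chosen via Chow-form bounds) some iterate $\xi^jq$ with $j\le\deg V$ does not vanish identically on $V$. This $\xi^jq$ is a polynomial $Q$ with $\operatorname{t}(Q)\lesssim\operatorname{t}(V)$.
\item Intersect with this polynomial. By Philippon's elimination lemma (see \cite[Chapter 3]{NesterenkoPhilippon2001IntroductionToAlgebraicIndependenceTheory}), either $|Q(\omega)|$ is already very small, or the cycle $V\cdot Z(Q)$, of codimension $r+1$, has a component $V'$ satisfying
\begin{equation*}
\operatorname*{ord}_\Gamma V'\gtrsim\operatorname*{ord}_\Gamma V-O(\operatorname{t}(V)).
\end{equation*}
\item Conversely, to bound $\operatorname*{ord}_\Gamma V$ from above one needs a polynomial that is \emph{small} on $\Gamma$ and nonzero on $V$, so the elimination must run in the other direction. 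Assuming $\operatorname*{ord}_\Gamma V=S\gg\operatorname{t}(V)^{n/r}$, I would use Siegel's lemma on $\mathbb Q[x]/I_V$ (whose Hilbert function is $\asymp\deg V\cdot\delta^{n-r}$) to construct $P\notin I_V$ of degree $\delta$ and height $\lesssim\delta\,\operatorname{t}(V)$ with $|P|_\Gamma\le e^{-cS}$. Taylor expansion along $\gamma$, combined with the smallness of $V$ near $\Gamma$, supplies the extra vanishing needed here.
\item Then $\operatorname{t}(P)$ is of size $\delta\operatorname{t}(V)$. Choosing $\delta\approx(S/\operatorname{t}(V))^{1/n}$ balances the dimension count, and comparing with \iref{Theorem order-bound} applied to $P$ yields a contradiction once $S>C\operatorname{t}(V)^{n/r}$.
\end{enumerate}

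\emph{Main obstacle.} The hardest step is the bookkeeping in step~3: obtaining the exponent exactly $n/r$ rather than a lossy one. This requires the auxiliary polynomial's parameters to match the Hilbert function of $V$, namely $\deg V\,\delta^{n-r}$ free coefficients against $\approx\delta^{n}$ conditions. I would first try Nesterenko's approach of working with the Chow form and its $\xi$-derivatives directly, so as to avoid an extra loss.
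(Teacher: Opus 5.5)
The invariant case (literally Definition~\ref{def:arith-d-prop}) and the codimension-one case are fine if Theorem~\ref{Theorem order-bound} is taken as a black box, and you are right to read $\operatorname{t}(p)$ as $\operatorname{t}(V)$. Note, however, that the paper gives no proof of this statement. It imports it from \cite{BinyaminiSalant2026Technical}, where the dependence runs the other way: the variety bound is proved by adapting Nesterenko's inductive machinery to the archimedean order, and Theorem~\ref{Theorem order-bound} is its codimension-one case.

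The real content is codimension $r\ge2$, and there your steps 3--4 do not work.
\begin{itemize}
\item The requirement $\max_\Gamma|P|\le e^{-cS}$ is not a condition on the class of $P$ in $\mathbb Q[x]/I_V$, since $\Gamma\not\subset V$. The ``extra vanishing'' meant to fix this is never specified.
\item Step 4 gives no contradiction. With $\operatorname{t}(P)\approx\delta\operatorname{t}(V)$ and $\delta\approx(S/\operatorname{t}(V))^{1/n}$, Theorem~\ref{Theorem order-bound} yields only $\operatorname*{ord}_\Gamma P\le C\,S\operatorname{t}(V)^{n-1}$, which is compatible with $\operatorname*{ord}_\Gamma P\ge cS$.
\item No tuning of parameters rescues the strategy. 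Suppose forcing $\max_\Gamma|P|\le e^{-cS}$ costs $\gtrsim S$ linear conditions, as the Taylor-coefficient construction does. Siegel's lemma with $\approx\deg V\,\delta^{n-r}$ unknowns then needs $\deg V\,\delta^{n-r}\gtrsim S$. A contradiction with Theorem~\ref{Theorem order-bound} needs $S\gg\operatorname{t}(P)^n\ge\delta^n$. Together these force $\delta^r\ll\deg V$, hence $S\ll(\deg V)^{n/r}$.
\end{itemize}
So your auxiliary polynomial exists only when $S$ is already below the bound you want. Comparing a single auxiliary polynomial with the hypersurface bound cannot detect the exponent $n/r$.

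What is missing is to run your steps 1--2 \emph{forward}, inside an induction on $\dim V$ starting from points (exponent $1$). You then contradict the induction hypothesis for the components of $V\cdot Z(Q)$, not Theorem~\ref{Theorem order-bound}. The polynomial $Q=\xi^jq$ of step 1 is exactly the ``small on $\Gamma$, nonzero on $V$'' polynomial you say you lack. Indeed, $q\in I_V$ is small on $\Gamma$ because $\Gamma$ is close to $V$, and its derivatives along the trajectory stay small on a slightly smaller disc by Cauchy estimates. Two archimedean issues arise here: the loss of radius, and the fact that $\operatorname*{ord}_\Gamma$ is only superadditive, so a large order of the cycle $V\cdot Z(Q)$ does not automatically pass to a component. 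These are the kind of difficulties that, per the introduction, the source controls with the Bernstein index.

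The degree bookkeeping of step 1 must also change. The cycle has order $\gtrsim S$ and $\operatorname{t}(V\cdot Z(Q))\lesssim\operatorname{t}(V)\operatorname{t}(Q)$, so the induction hypothesis gives $S\lesssim(\operatorname{t}(V)\operatorname{t}(Q))^{n/(r+1)}$. With your $\operatorname{t}(Q)\lesssim\operatorname{t}(V)$ this is $\operatorname{t}(V)^{2n/(r+1)}$, strictly weaker than $\operatorname{t}(V)^{n/r}$ for every $r\ge2$. To close the exponents, $Q\notin I_V$ must come from some $q\in I_V$ with $\operatorname{t}(q)\lesssim\operatorname{t}(V)^{1/r}$. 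Such $q$ exist by Proposition~\ref{Proposition polynomial in intersection of ideals} or its analogue over $K$, and they give $(\operatorname{t}(V)^{1+1/r})^{n/(r+1)}=\operatorname{t}(V)^{n/r}$. For such a low-degree $q$, however, non-containment of $V$ in an invariant variety only guarantees that \emph{some} iterate $\xi^jq$ leaves $I_V$. Controlling $j$, and with it $\operatorname{t}(Q)$ and the order loss, is part of the genuine technical work your sketch does not address.
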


The following corollary of Theorem~\ref{Theorem order-bound}, proved using
standard complex variable methods, is particularly useful in the
context of algebraic independence theory.

\begin{Cor}[The Lower Bound Theorem, \mycite{Corollary 6.3.1}{BinyaminiSalant2026Technical}]\label{cor:pointwise-bound}
  There is a constant $C$ depending only on $\xi,\gamma,K$ such that for any $0<h<1$,
  \begin{equation*}
    \log |p(\omega)| \ge - C\cdot\log\frac1h \cdot \operatorname{t}(p)^n
  \end{equation*}
  for any $\omega$ in $\Gamma$ outside of the image of a union of $C\cdot \operatorname{t}(p)^n$ discs in $\mathbb C$ with the sum of the radii not exceeding $h$.
  
\end{Cor}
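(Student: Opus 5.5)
We derive this from Theorem~\ref{Theorem order-bound} by the Cartan-type estimate for analytic functions of one variable. Set $f:=p\circ\gamma$, which is holomorphic on $D_1$ and continuous on $\overline{D_1}$. We may assume $f\not\equiv0$. This follows from the arithmetic $D$-property: if $p$ vanished on the trajectory, then $\{p=0\}$ would contain the Zariski closure of the trajectory, which is a nontrivial $\xi$-invariant variety. Its irreducible components of this hypersurface containing the trajectory would then have infinite order along $\Gamma$, contradicting Definition~\ref{def:arith-d-prop}. Alternatively, this case is already excluded by Theorem~\ref{Theorem order-bound}, which gives a finite order.

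The plan has three steps.

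\textbf{Step 1 (upper bound).} Bound $M:=\max_{\overline{D_1}}|f|$ from above by $\log M\le C_1\operatorname{t}(p)$. This holds because $\gamma(\overline{D_1})$ is a fixed compact set. Hence $|p(x)|$ there is at most $(\text{number of monomials})\cdot H(p)\cdot R^{\deg p}$, and up to field-embedding constants for $K$ this is $e^{O(\operatorname{t}(p))}$.

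\textbf{Step 2 (lower bound and zero count).} By Theorem~\ref{Theorem order-bound}, there is a point $z_0\in\overline{D_{1/e}}$ with $\log|f(z_0)|\ge -C\operatorname{t}(p)^n$. Jensen's formula, applied on a disc centred at $z_0$ of radius about $1/2$ (which lies inside $D_1$), bounds the number of zeros of $f$ in a slightly smaller concentric disc covering $\overline{D_{1/e}}$. The bound is $O(\log M-\log|f(z_0)|)=O(\operatorname{t}(p)^n)$.

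\textbf{Step 3 (Cartan estimate).} Apply the Cartan--Boutroux lemma, in the form of the standard lower bound for holomorphic functions (e.g.\ Levin's theorem on the minimum of the modulus). For $f$ holomorphic on a disc $|z-z_0|\le 2eR$ with $|f(z_0)|\ge1$ after normalization, and for $0<\eta<3e/2$, we have
\[\log|f(z)|\ge -\Bigl(2+\log\tfrac{3e}{2\eta}\Bigr)\log\max|f|\]
for $|z-z_0|\le R$, outside a family of discs with radii summing to at most $4\eta R$. Apply this to $f/f(z_0)$ with $\eta\asymp h$. The normalized maximum has logarithm at most $\log M-\log|f(z_0)|=O(\operatorname{t}(p)^n)$. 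This gives $\log|f(z)|\ge -C\log\frac1h\operatorname{t}(p)^n$ off exceptional discs of total radius at most $h$. In Cartan's lemma these discs are centred around the zeros, so their number is bounded by the zero count from Step~2, namely $C\operatorname{t}(p)^n$.

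The main obstacle is geometric bookkeeping: the point $z_0$ may lie anywhere in $\overline{D_{1/e}}$, so the concentric discs used in Cartan's lemma must fit inside $D_1$. Since $1/e<1/2$, the required radius ratio is still achievable. If not, one can cover $\overline{D_{1/e}}$ by finitely many small discs and propagate the lower bound via Harnack/Jensen chaining, losing only constants. A secondary point is making the count of discs explicit rather than only their total radius, which is why I use the zero-counting version of Cartan's lemma.
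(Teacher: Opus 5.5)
Your argument follows the paper's route. The corollary comes from inserting the order bound of Theorem~\ref{Theorem order-bound} into a Cartan-type estimate, which the paper records as Lemma~\ref{Lemma lower bound on polynomial with disks} (both the number of discs and the lower bound there are controlled by $\operatorname*{ord}_\Gamma p+\operatorname{t}(p)$). Your Steps~1--3 are the standard Jensen/Cartan--Boutroux proof of that estimate.

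\textbf{The one-shot geometry is wrong, so your fallback is required.} The claim that the concentric argument works ``since $1/e<1/2$'' fails. If the point $z_0$ supplied by Theorem~\ref{Theorem order-bound} lies near the circle $|z|=1/e$, then:
\begin{itemize}
\item any disc centred at $z_0$ containing $\overline{D_{1/e}}$ has radius close to $2/e\approx 0.74$;
\item any disc centred at $z_0$ inside $D_1$ has radius at most $1-1/e\approx 0.63$.
\end{itemize}
So the Jensen disc in Step~2 does not cover $\overline{D_{1/e}}$ for general $z_0$. Levin's form of Cartan's lemma needs holomorphy on $|z-z_0|\le 2eR$ with $R\ge 1/e$, so it cannot be applied in one step even when $z_0=0$.

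\textbf{A clean repair.} By Hadamard's three circles theorem,
\begin{equation*}
\log\max_{|z|\le r}|f|\ge -\log\tfrac1r\,\bigl(C_1\operatorname{t}(p)+\operatorname*{ord}_\Gamma p\bigr)
\end{equation*}
for any fixed $r<1/e$. So you may take $|z_0|\le r=1/10$, for which $|z-z_0|\le 1/2$ covers $\overline{D_{1/e}}$ and $|z-z_0|\le 0.85$ lies inside $D_1$. Then do one of the following:
\begin{itemize}
\item use a Cartan estimate with a general radius ratio (Poisson--Jensen plus Harnack, constants depending on the ratio) on $|z-z_0|\le 1/2$ inside $|z-z_0|\le 0.85$; or
\item cover $\overline{D_{1/e}}$ by $O(1)$ small discs whose centres receive a lower bound after $O(1)$ chaining steps.
\end{itemize}
Either way you lose only constant factors. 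The number of exceptional discs is $O(1)$ times the Jensen zero counts, hence $O(\operatorname{t}(p)^n)$.

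\textbf{A minor point on $h$.} Taking $\eta\asymp h$ produces the factor $2+\log(\mathrm{const}/h)$, which is $\le C\log\frac1h$ only for $h$ bounded away from $1$. This is harmless, because the statement is vacuous for $h\ge 1/e$: the single disc $\overline{D_{1/e}}$ is then admissible. This matches the restriction $h<1/e$ in Lemma~\ref{Lemma lower bound on polynomial with disks}, and you should state it explicitly.
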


And the following corollary generalizes the classical multiplicity
estimate of Nesterenko
\cite{nesterenko:mult-nonlinear}, \cite{Nesterenko1996ModularFunctionsAndTranscendenceQuestions}, which gives the
upper bound $(\deg p)^n$ for the \emph{order of zero}
$\operatorname*{ord}_{\omega_0} p|_\Gamma$ at a given point $\omega_0\in\Gamma$:

\begin{Cor}[\mycite{Corollary 9}{BinyaminiSalant2026Technical}] \label{cor:zero-bound}
  There is a constant $C$ depending only on $\xi,\gamma, K$ such that

  \begin{equation*}
    \# \text{zeroes of }p\text{ in }\Gamma\text{ counted with multiplicities}  \le C\cdot \operatorname{t}(p)^n.
  \end{equation*}

\end{Cor}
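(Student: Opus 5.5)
The plan is to derive Corollary~\ref{cor:zero-bound} from Theorem~\ref{Theorem order-bound} with a Jensen-type argument on the disc, using the fact that $\gamma$ is defined on the larger disc $\overline{D_1}$ while $\Gamma$ is the image of the smaller disc $\overline{D_{1/e}}$.

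Set $f:=p\circ\gamma$. It is holomorphic on $D_1$ and continuous on $\overline{D_1}$, and we may assume $f\not\equiv0$. If $f\equiv0$, then $\Gamma$ would lie in the hypersurface $\{p=0\}$. In that case $\operatorname*{ord}_\Gamma p=+\infty$, which contradicts Theorem~\ref{Theorem order-bound}. The zeros of $p$ on $\Gamma$, counted with multiplicity, are the zeros of $f$ in $\overline{D_{1/e}}$, so it suffices to count those.

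The argument has three steps.

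\textbf{Upper bound on the big disc.} Since $\gamma(\overline{D_1})$ is compact, $\|\gamma\|_\infty\le R$ for some $R$ depending only on $\gamma$. The polynomial $p$ has at most $\binom{n+\deg p}{n}$ monomials, each coefficient has modulus at most $e^{\operatorname{h}(p)}$ up to a constant depending on $K$ (handling the archimedean embedding), and each monomial is bounded by $\max(1,R)^{\deg p}$. Hence
\begin{equation*}
\log\max_{|z|\le 1}|f(z)|\le C_1\operatorname{t}(p),
\end{equation*}
and a fortiori this holds at every radius $r<1$.

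\textbf{Lower bound on the small disc.} By Theorem~\ref{Theorem order-bound} there is a point $z_0$ with $|z_0|\le 1/e$ and
\begin{equation*}
\log|f(z_0)|\ge -C\operatorname{t}(p)^n.
\end{equation*}

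\textbf{Jensen's formula.} Let $N$ be the number of zeros of $f$ in $\overline{D_{1/e}}$. Choose an intermediate radius, say $r=e^{-1/2}$, and apply Jensen's formula centred at $z_0$ on the disc $D(z_0,r-|z_0|)$. This disc contains $\overline{D_{1/e}}$ after a fixed shrinking. To avoid geometric fiddling, we instead compose with the Möbius automorphism of $D_r$ sending $z_0$ to $0$. It maps $\overline{D_{1/e}}$ into a disc of radius $\delta<1$ that is uniformly bounded away from $1$, because $|z_0|\le1/e<r$ is a fixed gap. Jensen's inequality then gives
\begin{equation*}
N\log(1/\delta)\le \log\max_{D_r}|f|-\log|f(z_0)|\le C_1\operatorname{t}(p)+C\operatorname{t}(p)^n.
\end{equation*}
Since $\operatorname{t}(p)\ge1$, this yields $N\le C'\operatorname{t}(p)^n$, where $C'$ depends only on $\xi,\gamma,K$.

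The main potential obstacle is the bookkeeping. First, "multiplicity" in the statement should mean the order of vanishing of $f$ in the parameter $z$; if $\gamma$ is not injective, points of $\Gamma$ are counted with the multiplicity of their preimages, which is consistent with the above count. Second, the height normalization over $K$ must be controlled: the coefficient assumption of the preceding subsection guarantees $\operatorname{h}(p)\ge0$, so that all constants absorb into $\operatorname{t}(p)^n$. Otherwise the argument is routine once Theorem~\ref{Theorem order-bound} is available.
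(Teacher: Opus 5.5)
Your proof is correct and follows essentially the paper's route (the paper cites the technical companion, and the same mechanism appears in Proposition~\ref{Proposition polynomial with many zeroes is small}): an $O(\operatorname{t}(p))$ upper bound for $\log\max_{\overline{D_1}}|p\circ\gamma|$, the lower bound $\max_{\overline{D_{1/e}}}|p\circ\gamma|\ge e^{-C\operatorname{t}(p)^n}$ from Theorem~\ref{Theorem order-bound}, and a Jensen-type conversion of this ratio into a zero count. Your Möbius--Jensen step just reproves the Ilyashenko--Yakovenko lemma (Theorem~\ref{Theorem ilyashenko yakovenko bound on number of zeroes of analytic function}), which you could have quoted directly; your passing remark that $D(z_0,r-|z_0|)$ covers $\overline{D_{1/e}}$ is false in general, but the Möbius version you actually use is sound.
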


This can be seen as a global version of Nesterenko's result; Corollary~\ref{cor:zero-bound}
counts the total number of zeroes in $\Gamma$ hence generalizing the
local estimate. We note however that Corollary~\ref{cor:zero-bound}
does not seem to be sufficient for deriving algebraic independence
results such as Theorem~\ref{thm:main}, which requires the stronger
estimate from Corollary~\ref{cor:pointwise-bound}.

A lot of work in the literature has been dedicated to the problem of
upper bounding the number of zeroes of a polynomial on the trajectory
of a vector field. Novikov and Yakovenko \cite{ny:chains} give a bound
for arbitrary polynomial vector fields but with exponential dependence
on $\deg p$. Yomdin and Comte \cite{yc:zeros-rational} have given
bounds polynomial in $\deg p$ when the trajectory passes through a
rational point. Binyamini \cite{me:poly-zeros} gives a bound depending
polynomially on $\deg p$ under a condition similar to the $D$-property,
with the exponent roughly $2n^2$. For the special class of
\emph{Pfaffian curves}, sharp bounds are available as a consequence of
Khovanskii's Fewnomial theory \cite{khovanskii:fewnomials}.

To our knowledge, Corollary~\ref{cor:zero-bound} is the first to
establish a bound with the optimal exponent $\operatorname{t}(p)^n$, albeit at the
cost of introducing dependence on the log-height of $p$. In algebraic
independence applications, this additional dependence does not
generally cause issues, while the optimal exponent $n$ plays a very
important role.

Our proof for Theorem~\ref{Theorem order-bound} is inspired by
Nesterenko's proof in the local case, and consists of generalizing
Nesterenko's machinery to our more global notion of order. This
introduces some difficulties associated to passing from Nesterenko's
non-archimedean order valuation to an archimedean valuation. We
address this by systematic application of the notion of the \emph{Bernstein index} (see \cite[\S 2.1]{BinyaminiSalant2026Technical}).

\subsection{Sketch of the proof of Theorem~\ref{thm:main}}

Our basic approach relies on the algebraic independence criterion of
Philippon \cite[Chapter 8,  Main criterion for algebraic independence (CIA)]{NesterenkoPhilippon2001IntroductionToAlgebraicIndependenceTheory}. The precise form we use is stated
in \iref{Subsection modified criterion}. Roughly speaking, in order to show that a point
$\omega\in\Gamma$ does not lie on \emph{any} algebraic variety $V$ of dimension $k$ over $\mathbb Q$, this criterion requires that we construct a sequence of polynomials $\{Q_N\}_{N\in\mathbb N}$ with $\operatorname{t}(Q_N)\to\infty$ with some regularity condition, such that
\begin{equation}\label{Equation sketch-cia}
  \operatorname{t}(Q_N)^{k+1} < -\log |Q_N(\omega)| < \operatorname{t}(Q_N)^{k+1+\varepsilon}
\end{equation}
for some sufficiently small $\varepsilon$ and $N\gg1$. We adapt Philippon's proof to give a quantitative form of this criterion: to avoid meeting varieties $V$ with $\operatorname{t}(V)<T$ it suffices to construct the sequence with $T^{c_1}<N<T^{c_2}$ for some suitable constants $c_1,c_2$.

Let $k$ by any integer that is smaller than $\sqrt n-1$. Suppose that for some $T\gg1$,
\begin{equation*}
  \Gamma(k,T^\alpha,T^\alpha) \ge T^\ell
\end{equation*}
where $\ell$ is some large constant to be determined and $\alpha$ is some small constant to be determined. If this is not
the case then the claim is proved. We construct polynomials vanishing
identically on these $T^\ell$ points: this is done by a pigeonhole
argument similar to the proof of Siegel's lemma, using an upper bound
for the Hilbert function of prime ideals in $\mathbb Q[x_1,\ldots,x_n]$ due
to Nesterenko \cite{Nesterenko1985EstimatesForTheCharacteristicFunctionOfAPrimeIdeal}. This enables us to construct polynomials $p_N$ with $\operatorname{t}(p_N)\simeq N$ and roughly $N^{n-k-\alpha}$ zeroes. It then follows from standard complex estimates that
\begin{equation}\label{Equation sketch-aux-upper}
  \log \max_{\omega\in\Gamma}\{|p_N(\omega)|\}\lesssim -N^{n-k-\alpha}.
\end{equation}
For the lower bound, we apply Corollary~\ref{cor:pointwise-bound} to
see that outside a $\operatorname{poly}(N)$ amount of discs of log-radius $N^\theta$ (for some sufficiently small $\theta$) we have
\begin{equation}\label{Equation sketch-aux-lower}
  \log |p_N(\omega)| \gtrsim -N^{n+\theta}.
\end{equation}
Since the upper bound \iref{Equation sketch-aux-upper} and lower
bound \iref{Equation sketch-aux-lower} do not match as required
in \iref{Equation sketch-cia}, we have to modify $p_N$ to get matching
bounds. Roughly, we take $Q_N:=p_N^{i_N}$ where $i_N$ is the minimal
required power needed to ensure that
\begin{equation*}
  \log \max_{\omega\in\Gamma}\{|Q_N(\omega)|\}\lesssim -N^n.
\end{equation*}
According to \iref{Equation sketch-aux-upper} we need at most $i_N=N^{k+\alpha}$ so that $\operatorname{t}(Q_N)\leq N^{k+1+\alpha}$. Applying the algebraic independence criterion now to $Q_{T^{c_1}},\ldots,Q_{T^{c_2}}$ we see that $\Gamma$ meets no algebraic varieties of dimension $\leq\frac{n}{k+\alpha+1}-1$ and degree and log-height bounded by $T$
whenever \iref{Equation sketch-aux-lower} holds for each $p_N$. For $\alpha$ small enough, Our choice
of $k$ insures that $k<\frac{n}{k+\alpha+1}-1$, and we conclude that $\Gamma(k,T^\alpha,T^\alpha)$ is
contained in the $\operatorname{poly}(T)$ amount of small exceptional balls of \iref{Equation sketch-aux-lower} for $Q_{T^{c_1}},\ldots,Q_{T^{c_2}}$. A small re-indexing argument then allows us to arrange that these balls have size $e^{-T}$ rather than $e^{-T^\theta}$ and to replace $\Gamma(k,T^\alpha,T^\alpha)$ with $\Gamma(k,T,T)$, which finishes the proof.

\begin{Rem}
  In the argument above, the precise lower
  bound \iref{Equation sketch-aux-lower} plays a crucial role. It is useful
  to compare this to the bounds used for the proof of Wilkie's
  conjecture in \so-minimality. In \so-minimality, we do not generally
  have lower bounds in the spirit of
  Corollary~\ref{cor:pointwise-bound}: the axioms of \so-minimality provides upper bounds for numbers of zeros of functions, but not lower bounds for the absolute values of non-vanishing functions. As a first approximation more suitable for comparison with \so-minimality,
  one may consider the bound on the number of zeroes as given in
  Corollary~\ref{cor:zero-bound}. Similar bounds do exist in
  \so-minimality: the number of zeroes of $p_N$ on $\Gamma$ is bounded
  by some polynomial in $N$. However the exponent need not be
  precisely $n$, and a larger exponent would lead to a much weaker
  result (if any) in Theorem~\ref{thm:main}.
\end{Rem}

\section{Examples}

In this section we present a few examples illustrating that as in the
case of Wilkie's conjecture, the $\operatorname{poly}(g,h)$ asymptotic of
Conjectures~\ref{conj:strong} and~\ref{conj:weak} is generally the
optimal asymptotic for the number of unlikely intersections (at least
up to determining the precise degree of the polynomial). We also
present some applications of Theorem~\ref{thm:main} for proving lower
bounds on transcendence degrees of fields generated by exponential
functions.

\subsection{\texorpdfstring{The curve $(z,e^z,\ldots,e^{z^n})$}{The curve (z,exp(z),...,exp(z\^n))}}.

We consider the curve
\begin{equation*}
  \Gamma=\{(z,e^z,\ldots,e^{z^n}) : |z|\leq R\}
\end{equation*}
for some fixed $R$. Denoting the coordinates by $(z,y_1,\ldots,y_n)$ this curve is a
trajectory for the vector field
\begin{equation*}
  \xi:=\frac{\partial}{\partial z} + y_1\frac{\partial}{\partial y_1}+\cdots+nz^{n-1}y_n\frac{\partial}{\partial y_n}.
\end{equation*}
Every invariant (irreducible) variety for this vector field is contained in some
$\{y_j=0\}$, and according to \iref{Proposition d-prop-criterion}
the pair $\xi,\Gamma$ satisfies the arithmetic $D$-property. Whenever
$z\in\mathbb Q$ the corresponding point on $\Gamma$ has
transcendence degree $\leq 1$: more specifically, if $z=\frac{p}q$ then $\Gamma$
intersects the curve
\begin{equation*}
  W_{\frac{p}q} = \{ z=\frac pq, y_2^q=y_1^p,\ldots,y_n^q=y_{n-1}^p \}.
\end{equation*}
We have $\deg W_{\frac pq}=\max(p,q)^{n-1}$ and
$\operatorname{h}(W_{\frac pq})\simeq\log\max(p,q)$. Therefore
\begin{equation*}
  \#\Gamma^{\operatorname{trans}(1)}(1,T,T) \gtrsim T^{\frac2{n-1}}
\end{equation*}
and we see that the counting function indeed grows like some (fractional) power of $T$.

\subsection{\texorpdfstring{The curve $(z,\log z,\log (1+z))$}{The curve (z, log z, log (1+z))}}

We consider the curve
\begin{equation*}
  \Gamma=\{(z,\log z,\log(1+z)) : z\in S\}
\end{equation*}
for some fixed simply-connected compact $S\subsetneq\mathbb C-\{0,-1\}$. Denoting the
coordinates by $(z,y_1,y_2)$ this curve is an image of a trajectory for the vector
field
\begin{equation*}
  \xi:=z(1+z) \big[ \frac{\partial}{\partial z} + \frac1z \frac{\partial}{\partial y_1}+\frac1{1+z}\frac{\partial}{\partial y_2} \big].
\end{equation*}
Once again the arithmetic $D$-property is easy to verify, and
$\Gamma^{\operatorname{trans}(1)}=\Gamma$.

We say $z\in\mathbb C$ is a $T$-algebraic if it satisfies a polynomial of degree and log-height $\leq T$ over $\mathbb Q$. Our goal now will be to count the amount of points in $\Gamma(1,T,T)$ that come a from $T$-algebraic $z$:

\[
    A_T:=\{(z,\log z,\log(1+z))\in\Gamma(1,T,T)):z\text{ is }T\text{-algebraic}\}.
\]

We notice that whenever $z$ is a solution for
\begin{equation*}
  z^n=(1+z)^m
\end{equation*}
then the corresponding point $(z,\log z,\log(1+z))$ belongs to $A_{\max(n,m)}$ as $z$ is algebraic and $n\log z=m\log(1+z)$. Thus, $\#A_T$ is bounded from below by $\operatorname{poly}(T)$. Furthermore, we deduce from Theorem~\ref{thm:main} that $\Gamma(1,T,T)$ is contained in a union of at most a $\operatorname{poly}(T)$ amount of balls of radius $e^{-T^2}$. However, no two points in $A_T$ can lie in the same ball and therefore, $\#A_T$ is also bounded from above by $\operatorname{poly}(T)$. Thus, the estimate of $\operatorname{poly}(T)$ is tight for the amount of points in $\Gamma(1,T,T)$ that come from a $T$-algebraic $z$.

\subsection{Implications of Conjecture~\ref{conj:strong}}
\label{Section conj-trans-applications}

We show that Conjecture~\ref{conj:strong} implies lower bounds for transcendence degrees of fields generated by values of the exponential function that go beyond the current state of the art. This demonstrates that progress towards this conjecture would probably require completely new ideas. We note that this type of deduction is a standard idea in transcendental number theory, and we sketch this here to demonstrate how this area connects to Conjecture~\ref{conj:strong} rather than to present novel ideas.

Let $F\subset\mathbb C$ be a field finitely generated over $\mathbb Q$ and denote its transcendence degree by $q$. Following \cite[V.2]{lang:intro-trans} we fix a transcendence basis $x_1,\ldots,x_q\in F$ and an element $y\in F$ such that 
\[
    F = \mathbb{Q}(x_1,\ldots,x_q,y)
\]
and moreover $y$ is integral over $\mathbb{Z}[x_1,\ldots,x_q]$. For an element $z\in F$ we refer the
reader to \cite[V.2]{lang:intro-trans} for the notion of the \emph{size}\footnote{We note that Lang's notion of word ``size'' in this context differs from the standard usage in Diophantine geometry; we use it here to keep the notation consistent with the referenced results.} of $z$, denoted $\size(z)$. Essentially this measures the degrees and logarithms of the absolute values of the coefficients involved in expressing $z$ as an element in the fraction field of $\mathbb{Z}[x_1,\ldots,x_q,y]$. Lang \cite[V.2~Lemma~1]{lang:intro-trans} proves the following lemma.

\begin{Lem}
    Fix $F,x_1,\ldots,x_q,y$ as above. Then there exists a constant $c>0$
    such that if $\alpha_1,\ldots,\alpha_m\in F$ then
    \begin{align*}
        \size(\alpha_1+\cdots+\alpha_m) &\le c\big(\size(\alpha_1)+\cdots+\size(\alpha_m)\big), \\
        \size(\alpha_1\cdots\alpha_m) &\le c\big(\size(\alpha_1)+\cdots+\size(\alpha_m)\big).
    \end{align*}
\end{Lem}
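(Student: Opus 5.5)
The statement is Lang's lemma on sizes in a finitely generated field $F=\mathbb{Q}(x_1,\ldots,x_q,y)$. We work with Lang's normalization. Let $d=[F:\mathbb{Q}(x_1,\ldots,x_q)]$. Every $z\in F$ can be written as
\begin{equation*}
  z=\frac{1}{Q(x)}\sum_{j=0}^{d-1}P_j(x)\,y^j,\qquad P_j,Q\in\mathbb{Z}[x_1,\ldots,x_q],
\end{equation*}
and $\size(z)$ is the minimum, over such representations, of $\max(\deg P_j,\deg Q)+\log\max(\text{coefficients})$, up to harmless normalizations. The plan is to verify both inequalities directly on these representations. We fix, once and for all, the monic relation $y^d=\sum_{j<d}a_j(x)y^j$ with $a_j\in\mathbb{Z}[x]$. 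Its degrees and heights supply the constants, and $c$ will depend only on this relation and on $d,q$.

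\textbf{Sums.} We first treat two summands, then iterate with some care. Given optimal representations of $\alpha_1,\ldots,\alpha_m$, we use the common denominator $Q=\prod_i Q_i$. The numerator coefficients are then $\sum_i P_{i,j}\prod_{l\neq i}Q_l$.
\begin{itemize}
  \item Degrees add: $\deg Q\le\sum_i\deg Q_i\le\sum_i\size(\alpha_i)$.
  \item For log-heights of products of integer polynomials in $q$ variables we use the standard bound $\operatorname{h}(AB)\le \operatorname{h}(A)+\operatorname{h}(B)+\log\binom{\deg A+q}{q}$. The extra term is $O(\deg)$, so it is absorbed into $c\cdot\size$.
  \item Summing $m$ terms costs $\log m$. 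This is at most $\sum_i\size(\alpha_i)$ as long as every size is $\ge 1$, which we may assume by normalization, since size is at least $1$ for nonzero elements in Lang's convention.
\end{itemize}
Hence the total is bounded by $c\sum_i\size(\alpha_i)$, without the compounding that a naive pairwise iteration would produce.

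\textbf{Products.} Multiplying the representations gives the denominator $\prod_i Q_i$ and a numerator that is a polynomial in $y$ of degree $\le m(d-1)$. Its coefficients are controlled as above by $\sum_i\size(\alpha_i)$ plus $O(\sum_i\deg)$. The remaining task, and the main obstacle, is reducing powers $y^N$ with $N<md$ back to degree $<d$ using the monic relation.

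Write $y^N=\sum_{j<d}b_{N,j}(x)y^j$. Then $b_{N,j}$ satisfies a linear recurrence with coefficients $a_j$. An induction gives $\deg b_{N,j}\le N\max_j\deg a_j$ and $\operatorname{h}(b_{N,j})\le N\cdot c_0$, with $c_0$ depending on $d$, $q$ and the $a_j$. Since $N\le md\le d\sum_i\size(\alpha_i)$, these contributions are linear in $\sum_i\size(\alpha_i)$.

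The care needed is to reduce once at the end, rather than after each multiplication, so that the linear bound is not multiplied by itself. Substituting the $b_{N,j}$ then adds only degrees and heights linear in $\sum_i\size(\alpha_i)$, which yields the product bound. Since size is defined as a minimum over representations, exhibiting one representation with these bounds suffices for both inequalities.
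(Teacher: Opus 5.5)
Your proof is correct. The paper gives no argument of its own and simply quotes \cite[V.2~Lemma~1]{lang:intro-trans}; your direct verification is the standard argument behind that citation: one common denominator for sums, and for products a complete expansion followed by a single reduction of the powers $y^N$, $N\le m(d-1)$, via the recurrence for $b_{N,j}$, whose degrees and log-heights grow linearly in $N$.

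One point deserves to be stated explicitly. The $\log m$ term for sums, and the $m\log d$ term coming from the number of products $\prod_i P_{i,j_i}$ in the expanded product, are absorbed only because every size is bounded below by a positive constant. So this normalization is genuinely necessary, not cosmetic: under a convention with $\size(1)=0$, taking $\alpha_1=\cdots=\alpha_m=1$ would contradict the first inequality.
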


We also require the following. Extend the notion of size to vectors by taking $\size(\alpha)=\max_i\size(\alpha_i)$.

\begin{Lem}
    Let $\alpha\in(F^*)^n$. Then there exists an algebraic variety 
    $V\subset(\mathbb C^*)^n$ defined over $\mathbb Q$ such that $\alpha\in V$ with $\dim V=q$ and
    \[
        \deg(V)+\operatorname{h}(V) \le C\cdot\size(\alpha)^D
    ,\]
    where $C$ is a constant depending only on $F,x_1,\ldots,x_q,y$ and $D$ depends only on $n,q$.
\end{Lem}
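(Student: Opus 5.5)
Write $\alpha=(\alpha_1,\ldots,\alpha_n)$ and express each coordinate in the fraction field of $\mathbb Z[x_1,\ldots,x_q,y]$ as $\alpha_i=A_i(x,y)/B_i(x,y)$. Here $A_i,B_i\in\mathbb Z[X_1,\ldots,X_q,Y]$ have degree and log-height $O(\size(\alpha))$, which is what the definition of size in \cite[V.2]{lang:intro-trans} provides. We also fix once and for all the minimal relation $f(x_1,\ldots,x_q,Y)=0$ of $y$: it is monic in $Y$ with coefficients in $\mathbb Z[x_1,\ldots,x_q]$, and its degree and height depend only on $F,x,y$.

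\textbf{Step 1 (the incidence variety).} Consider the variety $\widetilde W\subset\mathbb C^{q+1}\times\mathbb C^n$ with coordinates $(X,Y,Z)$, cut out by
\begin{equation*}
f(X,Y)=0,\qquad Z_iB_i(X,Y)-A_i(X,Y)=0 \quad (i=1,\ldots,n),
\end{equation*}
localized at $\prod_i B_i\ne0$, i.e.\ with the components inside $\{\prod_iB_i=0\}$ discarded. Over the open set where $\prod_iB_i\neq0$ it is the graph of a map from the hypersurface $\{f=0\}\subset\mathbb C^{q+1}$, which has dimension $q$. So $\widetilde W$ has dimension exactly $q$. It is defined over $\mathbb Q$, and the point $(x,y,\alpha)$ lies on it because $B_i(x,y)\neq0$.

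\textbf{Step 2 (projection).} Let $V$ be the Zariski closure of the projection of $\widetilde W$ to the $Z$-coordinates. Then $V$ is defined over $\mathbb Q$, contains $\alpha$, and has $\dim V\le q$. If $\dim V<q$, we replace $V$ by a $q$-dimensional variety containing it, for instance $V$ joined with a generic rational linear space through a point. It is simpler to note that the statement only needs some $V$ of dimension $q$, so we may intersect $V\times$ (nothing) appropriately. The cleanest route is to allow $\dim V\le q$ and then take the union with a suitable coordinate-type linear subvariety of controlled degree and height; this increases $\operatorname{t}$ by at most a bounded amount. We take the component containing $\alpha$ and intersect with $(\mathbb C^*)^n$.

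\textbf{Step 3 (degree and height bounds).} The degree bound follows from Bézout. The equations defining $\widetilde W$ have degrees $O(\size(\alpha))$, so after saturation the degree of $\widetilde W$ is $O(\size(\alpha))^{n+1}$. Projection does not increase degree, so $\deg V\le C\size(\alpha)^{n+1}$.

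For the height, we use the arithmetic Bézout theorem of Philippon / Bost--Gillet--Soulé (or the elimination estimates in \cite{NesterenkoPhilippon2001IntroductionToAlgebraicIndependenceTheory}). It bounds the height of each component of an intersection of hypersurfaces by a sum of products of the form $\operatorname{h}(H_j)\prod_{l\ne j}\deg H_l$, plus $O(\prod\deg)$. This gives $\operatorname{h}(\widetilde W)\le C\size(\alpha)^{n+2}$. Since the height of a linear projection of a variety is controlled by its degree and height, the same polynomial bound, with $D$ depending on $n$ and $q$, transfers to $V$. Passing to the component containing $\alpha$ also only decreases degree and height, up to additive constants.

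\textbf{Main obstacle.} The delicate step is the saturation by $\prod_iB_i$, i.e.\ discarding the spurious components of $\{f=0,\ Z_iB_i=A_i\}$ lying in $\{B_i=0\}$. These can have dimension larger than $q$. Their removal must be done while keeping height control; for example one can add the equation $t\prod_iB_i=1$ with an auxiliary variable $t$ and then project it away. That equation has degree $O(n\size(\alpha))$, so Bézout-type bounds still yield polynomial dependence on $\size(\alpha)$.
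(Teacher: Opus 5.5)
Your argument is the paper's argument made explicit. The paper only says that one writes each $\alpha_i$ as a rational function of $x_1,\ldots,x_q,y$ and then eliminates these variables. Your incidence variety $\widetilde W$, its projection, and the (arithmetic) B\'ezout bounds in Steps 1 and 3 are a reasonable way to carry out that elimination.

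The saturation you single out as the main obstacle is in fact harmless. $\widetilde W$ is a union of irreducible components of $\{f=0,\ Z_iB_i=A_i\}$. The refined B\'ezout inequality and its arithmetic analogue bound the degrees and heights of all components of such an intersection, excess ones included. So the Rabinowitsch variable works, but it is not needed.

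The step that does not hold together is the dimension padding in Step 2. You are right that elimination only gives $\dim V=\operatorname{trdeg}_{\mathbb Q}\mathbb Q(\alpha)\le q$; the paper's sketch silently claims equality. But the route you call cleanest fails, for two reasons:
\begin{itemize}
\item A union $V\cup L$ with $L$ linear is not pure-dimensional, whereas the paper defines $\operatorname{t}(\cdot)$ for pure-dimensional varieties, and the classes $\mathcal C^n_k$ are meant in that sense.
\item Your final instruction to take the component containing $\alpha$ simply returns the original lower-dimensional $V$, which undoes the padding.
\end{itemize}

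Your first suggestion, a cone, is the right one. Concretely, suppose $d:=\dim V<q\le n$, as in the application, where $q<n-1$.
\begin{itemize}
\item Pick $d$ coordinates that are algebraically independent on $V$.
\item Let $\pi:\mathbb C^n\to\mathbb C^{n-q+d}$ forget $q-d$ of the remaining coordinates, so that $\pi|_V$ is generically finite.
\item Replace $V$ by the cylinder $\pi^{-1}\bigl(\overline{\pi(V)}\bigr)\cap(\mathbb C^*)^n$.
\end{itemize}
This cylinder is irreducible over $\mathbb Q$ of dimension exactly $q$, contains $\alpha$, and has degree at most $\deg V$. Its height is controlled polynomially by that of $\overline{\pi(V)}$, hence by $\operatorname{t}(V)$. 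So the bound $C\cdot\size(\alpha)^D$ survives.
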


The proof of this lemma is essentially a resultant computation using the definition of size: one expresses each coordinate $\alpha_i$ as a rational function in $x_1,\ldots,x_q,y$ and and then eliminates the variables $x_1,\ldots,x_q,y$ to obtain an algebraic variety $V$ of dimension $q$ in the $\alpha_i$ variables.

We can now state a transcendence consequence of Conjecture~\ref{conj:strong}. Let $\lambda_1,\ldots,\lambda_n\in\mathbb R$ be linearly independent over $\mathbb Q$ and denote by $G_\lambda\subset(\mathbb R^*)^n$ the group
\begin{equation}\label{eq:Gl-def}
    G_\lambda := \{ (e^{\lambda_1 t},\ldots,e^{\lambda_n t}) : t\in\mathbb R\}.
\end{equation}
Denote by $G_\lambda(F)$ the subgroup of points of $G_\lambda$ whose coordinates lie in $F$.
\begin{Prop}\label{prop:trans-application}
    Suppose $q<n-1$. If Conjecture~\ref{conj:strong} holds then the multiplicative rank of $G_\lambda(F)$ is bounded by a constant independent of $F$. Assuming the stronger form given in Remark~\ref{rem:conj-strong-exp} one can take the constant depending only on $n$.
\end{Prop}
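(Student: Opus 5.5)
We argue by contradiction with a counting argument in the style of the classical six exponentials theorem. Suppose $G_\lambda(F)$ has large multiplicative rank $r$, and pick points $\alpha^{(1)},\ldots,\alpha^{(r)}\in G_\lambda(F)$ that are multiplicatively independent, say $\alpha^{(j)}=(e^{\lambda_1 t_j},\ldots,e^{\lambda_n t_j})$. The set $X:=G_\lambda\cap\{|t|\le 1\}$, or rather a compact piece of it rescaled so that it contains the relevant points, is a curve definable in $\mathbb R_{\exp}$, which is \so-minimal with sharp derivatives. Since $\dim X=1$ and $q<n-1=\operatorname{codim}X$, Conjecture~\ref{conj:strong} applies with $k=q$.

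\textbf{Step 1: produce many points of $X(q,g,h)$.} Consider the products $\beta_{m}=\prod_j(\alpha^{(j)})^{m_j}$ for $m\in\mathbb Z^r$ with $|m_j|\le M$. These are $(2M+1)^r$ distinct points of $G_\lambda(F)$, distinct because the $\alpha^{(j)}$ are multiplicatively independent. By Lang's lemma, $\size(\beta_m)\le c' M$ with $c'$ depending on the $\alpha^{(j)}$. By the second lemma, each $\beta_m$ lies on a $q$-dimensional variety $V_m$ over $\mathbb Q$ with $\operatorname{t}(V_m)\le C M^{D}$.

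The points $\beta_m$ correspond to parameters $t=\sum m_j t_j$, which need not lie in a bounded region. To fix this, I would use $\mathbb Q$-linear independence of the $\lambda_i$ and the group structure. One option is to restrict to those $m$ with $|\sum m_j t_j|\le 1$. By pigeonhole (Dirichlet), among $(2M+1)^r$ vectors there are at least $\gtrsim M^{r-1}$ whose $t$-values lie in a fixed unit interval. Translating by one of them, i.e.\ multiplying by the inverse point, keeps size $O(M)$ and places these points in the fixed compact piece $X$ corresponding to $|t|\le 1$. So $\#X(q,g,h)\gtrsim M^{r-1}$ with $g,h\le CM^D$.

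\textbf{Step 2: the transcendental part.} We must show these points lie in $X^{\operatorname{trans}(q)}$, or at least that $X^{\operatorname{alg}(q)}$ is small. Here I expect the main obstacle. $X^{\operatorname{alg}(q)}$ consists of points where $X$ meets a $(q+1)$-dimensional variety $W$ in a positive-dimensional set. Since $X$ is a real-analytic irreducible curve, this means $X\subseteq W$, so $X^{\operatorname{alg}(q)}$ is either empty or all of $X$. Thus it suffices to show that the Zariski closure of $G_\lambda$ has dimension $>q+1$.

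By Ax's theorem, or more elementarily because the $\lambda_i$ are $\mathbb Q$-linearly independent, the Zariski closure of the one-parameter subgroup $\{e^{\lambda t}\}$ is the full torus $(\mathbb C^*)^n$. Indeed, a polynomial vanishing on it gives a vanishing exponential sum $\sum c_a e^{\langle a,\lambda\rangle t}$ with distinct frequencies, which is impossible. Hence the dimension is $n>q+1$, and $X^{\operatorname{trans}(q)}=X$.

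\textbf{Step 3: compare growth.} Conjecture~\ref{conj:strong} gives $\#X^{\operatorname{trans}(q)}(q,g,h)\le C_X (g+h)^{E}$ for some exponent $E=E_X$. Combined with Step 1, this gives $M^{r-1}\lesssim M^{DE}$. Letting $M\to\infty$ yields $r\le DE+1$. Here $D$ depends only on $n,q$ and $E$ depends only on $X$, that is on $\lambda$, hence independent of $F$; and $q<n$ takes finitely many values.

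Two points need care. The implied constants depend on $F$ and the $\alpha^{(j)}$, but they disappear as $M\to\infty$. Under Remark~\ref{rem:conj-strong-exp}, the exponent depends only on the format and degree of $X$, which are bounded in terms of $n$ alone (the curve $t\mapsto e^{\lambda t}$ has format and degree $O_n(1)$ in $\mathbb R_{\exp}$, uniformly in $\lambda$). This gives the bound depending only on $n$.
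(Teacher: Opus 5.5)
Your proposal is correct and follows the paper's argument: take products of $r$ multiplicatively independent points of $G_\lambda(F)$, bound their size by Lang's lemma, place each on a $q$-dimensional variety with $\operatorname{t}\le CM^{D}$, observe that $X^{\operatorname{trans}(q)}=X$, and compare $M^{r}$ (you get $M^{r-1}$) against $\operatorname{poly}_X(M^{D})$. The only differences are minor. First, you restrict to a compact piece $|t|\le 1$ via a pigeonhole translation, which costs one in the exponent, whereas the paper works on all of $G_\lambda$ with exponents in $\{1,\ldots,N\}$. Second, you justify $X^{\operatorname{trans}(q)}=X$ by the identity principle together with Zariski density of $G_\lambda$ (distinct frequencies $\langle a,\lambda\rangle$), where the paper cites Ax--Schanuel.
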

\begin{proof}
    The set $X:=G_\lambda\cap(\mathbb R^*)^n$ is a definable subset in the restricted sub-Pfaffian structure to which Conjecture~\ref{conj:strong} applies.
    Suppose $G_\lambda(F)$ contains $r$ multiplicatively independent elements $X_1,\ldots,X_r$. Let $N\in \mathbb N$ and consider the set $S_N\subset G_\lambda(F)$ given by
    \[
        S_N := \{ X_1^{a_1}\cdots X_r^{n_r} : n_1,\ldots,n_r\in\{1,\ldots,N\}\}.
    \]
    According to the lemmas above, each element of $S_N$ is contained in an algebraic variety of dimension $q$ with degree and log-height bounded by $s_N:=C N^{D(n,q)}$ for appropriate constants $C,D$. 

    According to the Ax-Schanuel theorem, $X^{\operatorname{trans}(q)}=X$. Thus using Conjecture~\ref{conj:strong} we have
    \[
        N^r = \# S_N \le X^{\operatorname{trans}(q)}(q,s_N,s_N)\le\operatorname{poly}_{G_\lambda}(s_N)=
        \operatorname{poly}_{G_\lambda}(N^{D(n,q)})
    \]
    and we see that indeed for $r$ large enough (depending only on $G_\lambda$) we get a contradiction for $N\gg1$. Assuming the stronger form from Remark~\ref{rem:conj-strong-exp}, the exponent of the polynomial depends only on the format of $G_\lambda$, that is, on $n$.
\end{proof}

With some minor additional effort one can remove the assumption that $G_\lambda$ is a real group and handle the general complex case.

The conclusion of Proposition~\ref{prop:trans-application} implies that 
if $\{x_i\}\subset\mathbb R$ and $\{y_j\}\subset\mathbb R$ are $\mathbb Q$-linearly independent tuples then the transcendence degree of
\[
    \{ e^{x_i y_j} : i=1,\ldots,r \text{ and } j=1,\ldots,s\}
\]
tends to infinity as $r,s\to\infty$. Indeed, this can be seen by choosing $\{\lambda_i:=x_i\}_i$ and $t=y_j$ in~\eqref{eq:Gl-def}. Establishing such lower bounds for transcendence degrees is currently an open problem in transcendental number theory, and is generally known unconditionally only under some additional technical assumptions on the sets of generators, see \cite[Chapter~14]{NesterenkoPhilippon2001IntroductionToAlgebraicIndependenceTheory} for the best known bounds and \cite[Chapter~14, Definition 2.6]{NesterenkoPhilippon2001IntroductionToAlgebraicIndependenceTheory} for the technical condition. These technical conditions are generally of the type that enables one to repeat the proof above using the weaker Conjecture~\ref{conj:weak} in place of Conjecture~\ref{conj:strong}: they guarantee that the sets $S_N$ constructed above are not only big, but are moreover not contained in a small number of small balls in $G_\lambda$. In particular, making suitable technical assumptions of this sort one can establish a transcendence lower bound using our Theorem~\ref{thm:main} . We note however that these results would certainly be less sharp than the best known ones which are established using more accurate methods specific to group varieties rather than by very general counting principles.

\section{Proof of Theorem~\ref{thm:count-fixed-g}}
\label{Section count-fixed-g}

The idea, quite standard following \cite{pila:blocks}, is to translate
the problem of counting $X^{\operatorname{trans}(k)}(g,h)$ into counting rational
points in a larger-dimensional space depending on $g$. More
specifically, choose some algebraic realization of the space
$\mathcal C^n_{k,g}$ of pure $k$-dimensional subvarieties of
$\mathbb C^n$ with degree at most $g$. For example one may use the Chow
coordinates on this space. Then the following set is $\mathcal S$-definable
\begin{equation*}
  Z := \{ W\in\mathcal C^n_{k,g} : X\cap W\neq\emptyset \}.
\end{equation*}
According to the polylogarithmic counting theorem in the blocks
formulation \cite[Theorem~2]{me:pfaff-wilkie} we have that
\begin{equation*}
  Z(1,h) \subseteq \cup_\eta B_\eta
\end{equation*}
where the union is taken over a collection of $\operatorname{poly}_{X,g}(h)$ blocks
$B_\eta$ of format $O_{X,g}(1)$ and degree $\operatorname{poly}_{X,g}(h)$. It will
suffice to show that for each block $B:=B_\eta$, we have
$\# X_B(k,g,h) = \operatorname{poly}_{X,g}(h)$ where
\begin{equation}\label{Equation X_B-bound}
  X_B:=\{ x\in X^{\operatorname{trans}(k)} : x\in W \text{ for some }W\in B\}.
\end{equation}
First, if for some $W\in B$ the intersection $X\cap W$ has positive
dimension at $x$ then by definition $x\in X^{\operatorname{alg}(k)}$ so such points
are irrelevant for \iref{Equation X_B-bound}. By \so-minimality the
remaining points of $X_B$ lie in the union of the images of a
$\operatorname{poly}_{X,g}(h)$ amount of continuous functions $f_\sigma:S_\sigma\to X$ where
$S_\sigma\subseteq B$ are definable subsets
and both $S_\sigma,f_\sigma$ have format $O_{X,g}(1)$ and degree
$\operatorname{poly}_{X,g}(h)$. By \cite[Proposition~6]{me:pfaff-wilkie} one may further suppose that the $S_\sigma$ are smooth submanifolds. For all $\sigma$ such that $\dim S_\sigma<\dim B$ we
replace $Z$ by $S_\sigma$ and proceed by induction on $\dim Z$ to
obtain the bound
\begin{equation*}
  \# [f_\sigma(S_\sigma)]^{\operatorname{trans}(k)}(g,h) = \operatorname{poly}_{X_,g}(h).
\end{equation*}
Note that $[f_\sigma(S_\sigma)]^{\operatorname{alg}(k)}\subseteq X^{\operatorname{alg}(k)}$ since
$f_\sigma(S_\sigma)\subseteq X$ so we do not miss any points relevant
to \iref{Equation X_B-bound}.

It remains to consider the case $\dim S_\sigma=\dim B$. The set
\begin{equation*}
  \mathrm{LC}_\sigma := \{ f_\sigma(s) : f_\sigma \text{ is locally constant at } s\in S_\sigma\}
\end{equation*}
is finite of size $\operatorname{poly}_{X,g}(h)$ by \so-minimality. We claim
that
$\mathrm{NC}_\sigma:=f_\sigma(S_\sigma)-\mathrm{LC}_\sigma$
satisfies $\mathrm{NC}_\sigma\subseteq X^{\operatorname{alg}(k)}$ so these are again
irrelevant for \iref{Equation X_B-bound}. Indeed, let
$x_0=f_\sigma(s_0)\in\mathrm{NC}_\sigma$. Since $\dim S=\dim B$ and
$B$ is a block, then there is a neighborhood of $s_0$ where $S_\sigma$ is
an open subset of a smooth semialgebraic set. Since $f_\sigma$ is
continuous and not locally constant at $s_0$, it follows that it is
also not locally constant when restricted to a generic semialgebraic
curve $\Gamma$ contained in $S_\sigma$ and passing through $s_0$. Then
the union
\begin{equation*}
  W_0 := \{ x\in W : W\in\Gamma \}
\end{equation*}
has Zariski closure of dimension $k+1$, and intersects $X$ in the
curve $f_\sigma(\Gamma)$ which has positive dimension at
$f_\sigma(s_0)$. Thus $f_\sigma(s_0)\in X^{\operatorname{alg}(k)}$ as claimed.

\section{Transcendence Criteria for Bounded Height and Degree}

In this section our goal is to modify Philippon's classical criterion for algebraic independence (for the original see \cite[Chapter 8, Main criterion for algebraic independence (CIA)]{NesterenkoPhilippon2001IntroductionToAlgebraicIndependenceTheory} and for the modified version see \iref{Proposition modified transcendence criterion}).

\subsection{Definitions}\label{Subsection definitions philippon's criterion}

Let $Q^{\mathrm{proj}} \in \mathbb{C}[X_0, \ldots, X_n]$ be a nonzero homogeneous polynomial, and let $\boldsymbol{x}=(x_0:\ldots,x_n), \boldsymbol{y}=(y_0:\ldots:y_n) \in \mathbb{P}^n(\mathbb{C})$.
\begin{itemize}
\item The \emph{norm of $Q^{\mathrm{proj}}$} is defined by
\[
    \operatorname{norm}(Q^{\mathrm{proj}})
    :=
    \sqrt{\sum_{\alpha} \frac{|Q^{\mathrm{proj}}_\alpha|^2}{\binom{\deg Q^{\mathrm{proj}}}{\alpha}} }
\]
where the sum is over all multi-indices $\alpha = (\alpha_0, \ldots, \alpha_n)$ with $\sum\alpha_i= \deg Q^{\mathrm{proj}}$, and $Q^{\mathrm{proj}}_\alpha$ denotes the corresponding coefficient of the monomial $X^\alpha$.

\medskip

\item The \emph{norm of $Q^{\mathrm{proj}}$ at $\boldsymbol{x}$} is defined by
\[
    \operatorname{norm}(Q^{\mathrm{proj}}, \boldsymbol{x})
    :=
    \frac{ |Q^{\mathrm{proj}}(\boldsymbol{x})| }{ \sqrt{\sum_{i=0}^{n} |x_i|^2 }^{\deg Q^{\mathrm{proj}}} }.
\]

\medskip

\item The \emph{$\operatorname{h_1}$-log-height of $Q^{\mathrm{proj}}$} is defined by
\[
    \operatorname{h}_1(Q^{\mathrm{proj}})
    :=
    \log \left( \operatorname{norm}(Q^{\mathrm{proj}}) \right).
\]

\medskip

\item The \emph{projective distance} between $\boldsymbol{x}$ and $\boldsymbol{y}$ is defined by
\[
    \operatorname{Dist}(\boldsymbol{x}, \boldsymbol{y})
    =
    \frac{ \sqrt{ \sum_{0 \le i < j \le n} |x_i y_j - x_j y_i|^2 } }{ \sqrt{ \sum_{i=0}^n |x_i|^2 } \cdot \sqrt{ \sum_{i=0}^n |y_i|^2 } }.
\]

\medskip

\item For $r > 0$, the \emph{closed projective ball of radius $r$ centered at $\boldsymbol{x}$} is denoted by
\[
    \overline{B_{\boldsymbol{x}, \operatorname{Dist}}(r)}
    =
    \left\{ \boldsymbol{y} \in \mathbb{P}^n(\mathbb{C}) \,\middle|\, \operatorname{Dist}(\boldsymbol{x}, \boldsymbol{y}) \le r \right\}.
\]
\end{itemize}

\subsection{Philippon's criterion}

\begin{ThmCor}[Philippon's Criterion for Algebraic Independence, \mycite{Chapter 8, Main criterion for algebraic independence (CIA)}{NesterenkoPhilippon2001IntroductionToAlgebraicIndependenceTheory}]\label{Theorem Phillipon's criterion}

Let $\boldsymbol{x}\in\mathbb{P}^{n}(\mathbb{C}), k\in\{0,...,n\}$ and $\delta_0,\tau_0,\sigma_0,U_0$ real numbers satisfying $\delta_0\geq1,\sigma_0\geq1,\sigma_0^{k+1}<\tau_0<U_0$. 
Assume that for any $S$ satisfying $\frac{\tau_0}{\sigma_0^{k+1}}<S\le\frac{U_0}{\sigma_0^{k+1}}$, there exists a homogeneous polynomial $Q^{\mathrm{proj}}\in\mathbb Z[X_0,...,X_n]$ satisfying

\begin{enumerate}
    \item $\deg\,Q^{\mathrm{proj}}\le\delta_0$.\label{Condition 1}
    \item $\operatorname{h_1}(Q^{\mathrm{proj}})\le\tau_0$.\label{Condition 2}
    \item $\frac{\operatorname{norm}(Q^{\mathrm{proj}},\boldsymbol{x})}{\operatorname{norm}(Q^{\mathrm{proj}})}\le e^{-S\sigma_0^{k+1}}$.\label{Condition 3}
    \item $Q^{\mathrm{proj}} \;\text{does not have any zero in the closed ball} \;\overline{B_{\boldsymbol{x},\operatorname{Dist}}(e^{-S\sigma_0^{k+2}})}$.\label{Condition 4}
\end{enumerate}

Then, for any projective variety $V$ over $\mathbb Q$ of dimension at most $k$ such that
\begin{enumerate}[resume]
    \item $(k+1)(\operatorname{h}(V)\delta_0+\deg(V)((k+1)\tau_0+3\,\log(n+1)\delta_0))\delta_0^k\sigma_0^{k+1}
        <
    U_0,$\label{Condition assumption V}
\end{enumerate}

we have that $\boldsymbol{x}$ does not lie on $V$.

\end{ThmCor}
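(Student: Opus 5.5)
This is Philippon's criterion as cited from Nesterenko--Philippon, so the plan is to reproduce the standard proof by descending induction on dimension via intersection with the auxiliary polynomials. Argue by contradiction: suppose $\boldsymbol{x}\in V$ with $V$ a $\mathbb Q$-variety of dimension $\le k$ satisfying \iref{Condition assumption V}. It suffices to treat $V$ irreducible of dimension exactly $k$. Any component of lower dimension $k'<k$ satisfies \iref{Condition assumption V} with $k'$ in place of $k$, because $\sigma_0\ge1$ and $\delta_0\ge1$. Moreover, for $\dim V=0$, being over $\mathbb Q$, $V$ is a Galois orbit of points of $\mathbb P^n(\overline{\mathbb Q})$.

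\textbf{Key tool.} I would use Philippon's generalization of the Liouville inequality together with the \emph{absolute value} $|W(\boldsymbol{x})|$ of a cycle at a point, as in \cite[Chapter 3]{NesterenkoPhilippon2001IntroductionToAlgebraicIndependenceTheory}. Two facts about it are needed.
\begin{enumerate}
\item A Liouville-type lower bound for a $0$-dimensional $\mathbb Q$-cycle $Z$:
\[
\log|Z(\boldsymbol{x})|\ge -c\bigl(\operatorname{h}(Z)+\deg(Z)\log(n+1)\bigr).
\]
\item The intersection step. Let $W$ be irreducible of dimension $r\ge1$ with $W\not\subseteq Z(Q)$. Set $W'=W\cdot Z(Q)$. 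Then
\[
\deg W'\le \deg W\,\deg Q,\qquad \operatorname{h}(W')\le \operatorname{h}(W)\deg Q+\deg W\,\operatorname{h}_1(Q)+c\,\deg W\deg Q\log(n+1).
\]
In addition, the key metric inequality holds: if $\operatorname{Dist}(\boldsymbol{x},W)$ is small compared to the zero-free radius of $Q$, then $\log|W'(\boldsymbol{x})|\lesssim \log|W(\boldsymbol{x})|+\deg W\log\bigl(\operatorname{norm}(Q,\boldsymbol{x})/\operatorname{norm}(Q)\bigr)$, up to correction terms. If instead $\boldsymbol{x}$ is far from $W$, we get $|W(\boldsymbol{x})|$ bounded below by a distance term.
\end{enumerate}

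\textbf{Induction.} Start with $W_0=V$, where $|V(\boldsymbol{x})|=0$ since $\boldsymbol{x}\in V$. At step $j$ we have a cycle $W_j$ of dimension $k-j$ with controlled degree $\le \deg V\,\delta_0^{j}$. Its height is bounded by the recursion above, and we need $\log|W_j(\boldsymbol{x})|\le -S_j\sigma_0^{k+1-j}\cdot(\text{stuff})$. The parameter $S$ is chosen adaptively in the allowed window $(\tau_0/\sigma_0^{k+1},U_0/\sigma_0^{k+1}]$ so that $e^{-S\sigma_0^{k+2}}$ is at the scale of $\operatorname{Dist}(\boldsymbol{x},W_j)$. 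Condition \iref{Condition 4} then guarantees that $Q$ does not vanish on the component of $W_j$ nearest $\boldsymbol{x}$, so the proper intersection is possible. Condition \iref{Condition 3} transfers smallness, gaining a factor $e^{-S\sigma_0^{k+1}\deg W_j}$. The lower endpoint $\sigma_0^{k+1}<\tau_0$ ensures this gain dominates the height loss $\deg W_j\,\tau_0$. The extra power $\sigma_0$ in the ball radius versus the smallness exponent is what lets us pay for replacing $|W(\boldsymbol x)|$ by a distance at each of the $k+1$ steps.

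\textbf{Conclusion.} After $k+1$ steps we get a $0$-dimensional cycle $Z$ with
\[
\deg Z\le \deg V\,\delta_0^{k+1},\qquad \operatorname{h}(Z)\lesssim (k+1)\bigl(\operatorname{h}(V)\delta_0+\deg V((k+1)\tau_0+3\log(n+1)\delta_0)\bigr)\delta_0^{k}.
\]
It also satisfies $\log|Z(\boldsymbol{x})|\le -U_0/\sigma_0^{k+1}\cdot\sigma_0^{k+1}$-type smallness, i.e. essentially $-U_0$ after accounting for the $\sigma_0^{k+1}$ factor. Liouville then contradicts \iref{Condition assumption V}.

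The main obstacle is the bookkeeping of the adaptive choice of $S$ at each step. In particular, we must handle the case where the distance from $\boldsymbol{x}$ to $W_j$ is either too large (direct Liouville contradiction) or too small (outside the window, needing $U_0$). The exact constants $(k+1)$, $(k+1)\tau_0$ and $3\log(n+1)$ come from summing the height recursion. I would follow the proof in \cite[Chapter 8]{NesterenkoPhilippon2001IntroductionToAlgebraicIndependenceTheory} verbatim for this part.
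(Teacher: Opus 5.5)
The paper does not prove this statement: it quotes it from Nesterenko--Philippon, so the benchmark is the standard argument you are sketching. Your overall scheme is the right one: successive intersections with auxiliary polynomials, (4) making them proper, (3) supplying smallness, and an arithmetic contradiction at the end. However, both of your key tools are misstated in ways that break the argument.

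\textbf{Tool (2).} The metric inequality cannot hold as written. You start from $W_0=V\ni\boldsymbol{x}$, so $\log|W_0(\boldsymbol{x})|=-\infty$. An additive bound $\log|W_1(\boldsymbol{x})|\lesssim\log|W_0(\boldsymbol{x})|+\deg W_0\cdot\log\bigl(\operatorname{norm}(Q,\boldsymbol{x})/\operatorname{norm}(Q)\bigr)$ would then force $\boldsymbol{x}\in W_1\subseteq\{Q=0\}$. That contradicts (4), whose ball is centred at $\boldsymbol{x}$, before (5) is ever used. The correct estimate is of max type: roughly, height terms plus a multiple of $\max\{\log(\operatorname{norm}(Q,\boldsymbol{x})/\operatorname{norm}(Q)),\ \log\operatorname{Dist}(\boldsymbol{x},W)+O(\delta_0)\}$. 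So smallness does not accumulate from step to step. Instead, if $\boldsymbol{x}$ is $e^{-E}$-close to $W_j$, you can only take $S\sigma_0^{k+2}\approx E$, because the ball in (4) must reach $W_j$. That buys smallness $S\sigma_0^{k+1}\approx E/\sigma_0$. What survives to the end is therefore $U_0$ divided by a power of $\sigma_0$, not ``essentially $U_0$'', and this loss is exactly what the factor $\sigma_0^{k+1}$ in (5) pays for. This propagation, together with the comparison between $|W_j(\boldsymbol{x})|$ and the distance to the relevant component, is the real content of the proof, and you defer it.

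\textbf{Tool (1) and the endgame.} Tool (1) is false. For a $0$-dimensional $\mathbb Q$-cycle $Z$ there is no lower bound for $|Z(\boldsymbol{x})|$ at an arbitrary $\boldsymbol{x}\in\mathbb P^n(\mathbb C)$ in terms of $\operatorname{h}(Z)$ and $\deg Z$: $|Z(\boldsymbol{x})|\to0$ as $\boldsymbol{x}$ tends to a point of $Z$ (take $Z$ a single rational point). Liouville bounds algebraic quantities from below, never the proximity of the transcendental point $\boldsymbol{x}$ to a cycle.

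Your count is also inconsistent. With $\dim W_j=k-j$, $k$ steps give the $0$-cycle $W_k$, and a $(k+1)$-st intersection is empty, i.e.\ a rational integer $N$ (the resultant of $W_k$ and $Q$). Your bounds $\deg V\,\delta_0^{k+1}$ and $(k+1)(\ldots)\delta_0^{k}$ are the bookkeeping of that $(k+1)$-fold intersection, not of a $0$-cycle.

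The missing endgame uses the hypothesis once more:
\begin{itemize}
\item Choose $S$ in the window so that the ball in (4) contains the point $\beta\in W_k$ nearest to $\boldsymbol{x}$. The recursion must guarantee $-\log\operatorname{Dist}(\boldsymbol{x},\beta)>\tau_0\sigma_0$ for such an $S$ to exist.
\item Then $Q(\beta)\neq0$, so by Galois $Q$ vanishes at no conjugate of $\beta$. Hence $N\neq0$ and $\log|N|\ge0$; this is the only place Liouville enters.
\item Conversely, $|Q(\beta)|\le|Q(\boldsymbol{x})|+e^{O(\delta_0+\tau_0)}\operatorname{Dist}(\boldsymbol{x},\beta)$ together with (3) bounds $\log|N|$ by the accumulated heights minus $S\sigma_0^{k+1}$.
\item Condition (5) makes this upper bound negative, which is the contradiction.
\end{itemize}
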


\begin{Rem}\label{Remark algebraic independence criterion with dist}

The theorem is actually slightly stronger -- under these conditions, we have that

\[
    \log\operatorname{Dist}(\boldsymbol{x},V)
        \ge
    -U_0
\]

for some appropriate definition of $\operatorname{Dist}(\boldsymbol{x},V)$, see \cite[Chapter 8, Main criterion for algebraic independence (CIA)]{NesterenkoPhilippon2001IntroductionToAlgebraicIndependenceTheory}.

\end{Rem}

\subsection{The modified criterion}\label{Subsection modified criterion}

\begin{Prop}[Modified Criterion for Algebraic Independence]\label{Proposition modified transcendence criterion}
Let $\omega= (\omega_1, \ldots, \omega_n) \in \mathbb{C}^n$.  
Assume there exist integers $a_0, k \in \mathbb{N}$, non-decreasing functions  
\[
\delta, \tau, \sigma, U : \mathbb{N}_{\ge a_0} \to \mathbb{R}_{\ge 2},
\]
and positive constants $\gamma_1, \gamma_2, \gamma_3 \in \mathbb{R}^{>0}$ such that for all integers $a \ge a_0$, the following assumptions are satisfied:

\begin{enumerate}
    \item \label{Assumption tau + ndelta > sigma}$\sigma(a)^{k+1}< \tau(a)+n\delta(a)$.
    \item \label{Assumption growth of U}$a^{\gamma_1} < \frac{U(a)}{(\delta(a) + \tau(a)) \delta(a)^k \sigma(a)^{k+1}} < a^{\gamma_2}$.
    \item \label{Assumption U is polynomial}$U(\lceil a^{\gamma_3} \rceil)\le \tau(a)$.
\end{enumerate}

Let $c_1,c_2,c_3$ be some positive constants satisfying:
\begin{equation} 
c_1<\frac{\gamma_3}{\gamma_2}, \quad c_2>\frac1{\gamma_1}, \quad c_3>1. \label{Equation c_i}
\end{equation}

denote

\[
    A(\omega):= 2n+1 + \log\left(1+c_3^2\cdot n\cdot\max_i\{|\omega_i|^2\}\right).
\]

Then, for any sufficiently large positive real number $T$, the following holds:

Assume that for every integer $N$ satisfying $T^{c_1}\le N\le T^{c_2}$, there exists a polynomial
\[
Q_N \in \mathbb{Z}[x_1, \ldots, x_n]
\]
with degree bounded by $\delta(N)$ and log-height bounded by $\tau(N)$, such that
\begin{equation} \label{Equation assumption QN}
    e^{A(\omega)\cdot(\delta(N) + \tau(N)) - \frac12 U(N-1) \sigma(N)}
        <
    \left| Q_N(\omega) \right| < e^{-U(N)}.
\end{equation}

Then, given any algebraic variety  $V \subseteq \mathbb{C}^n$ defined over $\mathbb Q$ of dimension at most $k$ with
\[
\deg V\le T, \quad \operatorname{h}(V) \le T,
\]
we have that $\omega$ does not lie on $V$.

\end{Prop}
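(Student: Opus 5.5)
The plan is to deduce the statement from Philippon's criterion (Theorem~\ref{Theorem Phillipon's criterion}), applied to the point $\boldsymbol{x}=(1:\omega_1:\ldots:\omega_n)$. The polynomials used are the homogenizations $Q_N^{\mathrm{proj}}$ of the given $Q_N$. Fix $V$ over $\mathbb Q$ with $\dim V\le k$ and $\deg V,\operatorname{h}(V)\le T$.

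\textbf{Parameters.} I will choose a single index $a$, a suitable power of $T$ (essentially $a\approx T^{c_2}$), and set
\[
\delta_0=\delta(a),\quad \tau_0\approx\tau(a)+n\delta(a),\quad \sigma_0=\sigma(a),\quad U_0=U(a).
\]
The shift in $\tau_0$ absorbs the passage from the log-height to $\operatorname{h}_1$, which costs $O(\delta\log(n+1))$. By monotonicity, every $Q_N$ with $N\le a$ then satisfies Conditions (1)--(2).

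\textbf{Condition (5).} Since $\deg V,\operatorname{h}(V)\le T$, the left-hand side of (5) is $O(T(\delta_0+\tau_0)\delta_0^k\sigma_0^{k+1})$. By Assumption~\ref{Assumption growth of U} this is $<U_0$ once $a^{\gamma_1}\gg T$, which holds because $c_2>1/\gamma_1$ and $T$ is large. Assumption~\ref{Assumption tau + ndelta > sigma} gives $\sigma_0^{k+1}<\tau_0$, as the criterion requires.

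\textbf{Choice of $N$ for each $S$.} Given $S\in(\tau_0/\sigma_0^{k+1},U_0/\sigma_0^{k+1}]$, take $N$ minimal with $U(N)\ge S\sigma_0^{k+1}+O(\delta_0)$. Then $N\le a$, up to the harmless $O(\delta)$ slack. Also $U(N-1)<S\sigma_0^{k+1}+O(\delta_0)$. For the lower range, $S\sigma_0^{k+1}>\tau_0\ge\tau(a)\ge U(\lceil a^{\gamma_3}\rceil)$ by Assumption~\ref{Assumption U is polynomial}, so $N>a^{\gamma_3}$. The conditions $c_1<\gamma_3/\gamma_2$ and $c_2>1/\gamma_1$ are exactly what should make $a^{\gamma_3}\ge T^{c_1}$ and $a\le T^{c_2}$ compatible. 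I would fix $a$ as the power of $T$ balancing these, possibly using the upper bound $a^{\gamma_2}$ of Assumption~\ref{Assumption growth of U} to relate $U_0$ to $T$.

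\textbf{Condition (3).} The denominator of $\operatorname{norm}(Q^{\mathrm{proj}},\boldsymbol{x})$ is at least $1$. Since $Q_N$ has integer coefficients, $\operatorname{norm}(Q^{\mathrm{proj}})\ge(n+1)^{-\delta/2}$. Hence the ratio in (3) is at most $|Q_N(\omega)|(n+1)^{\delta/2}\le e^{-U(N)+O(\delta)}\le e^{-S\sigma_0^{k+1}}$.

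\textbf{Condition (4).} Using the left inequality of \eqref{Equation assumption QN}, $|Q_N(\omega)|>e^{A(\omega)(\delta+\tau)-\frac12U(N-1)\sigma(N)}$. By minimality of $N$ and $\sigma(N)\le\sigma_0$, the exponent is at least $A(\omega)(\delta+\tau)-\frac12 S\sigma_0^{k+2}$ up to lower-order terms.

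Next I bound the variation of $Q_N$ on the affine ball around $\omega$ corresponding to projective distance $e^{-S\sigma_0^{k+2}}$; this ball stays inside $\|y\|\le c_3\sqrt n\max|\omega_i|$-type bounds. The gradient of $Q_N$ there is at most $e^{\tau}\cdot(\text{number of monomials})\cdot(1+c_3^2n\max|\omega_i|^2)^{\delta/2}$, which is $\le e^{A(\omega)(\delta+\tau)}$. This is precisely why $A(\omega)$ contains $2n+1$ and the $\log(1+c_3^2 n\max|\omega_i|^2)$ term. So on that ball
\[
|Q_N(y)-Q_N(\omega)|\le e^{A(\omega)(\delta+\tau)}e^{-S\sigma_0^{k+2}}<|Q_N(\omega)|,
\]
and $Q_N$ has no zero there. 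Zeros at infinity are excluded since points near $\boldsymbol{x}$ have $y_0\neq0$, with $c_3>1$ giving the room. One must also convert projective distance into affine distance, which costs a factor bounded in terms of $|\omega|$.

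\textbf{Conclusion and main obstacle.} Philippon's criterion then gives $\omega\notin V$. I expect the main obstacle to be the exponent bookkeeping: choosing $a$ as a power of $T$ so that every $N$ produced lies in $[T^{c_1},T^{c_2}]$ while (5) still holds. I would first try $a=\lceil T^{c}\rceil$ with $c$ strictly between $1/\gamma_1$ and $c_1/\gamma_3$-type thresholds, and adjust using Assumption~\ref{Assumption growth of U}'s upper bound if these are incompatible. The second delicate point is making sure the $O(\delta)$ losses in (3) and (4) are dominated using Assumption~\ref{Assumption tau + ndelta > sigma}.
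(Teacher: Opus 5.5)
Your architecture is the paper's: Philippon's criterion at $(1:\omega)$ applied to the homogenized $Q_N$, one index fixing $(\delta_0,\tau_0,\sigma_0,U_0)$ with $\tau_0=\tau+n\delta$, a minimal $N$ for each $S$, Assumption~(3) for the lower end of the $N$-range, $\operatorname{norm}(Q^{\mathrm{proj}})\ge(n+1)^{-\delta/2}$ for Condition~(3), and a Taylor/gradient bound with projective-to-affine conversion for Condition~(4).

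\textbf{Choice of the index.} This is where you differ. The paper takes $i_0$ \emph{minimal} such that Condition~(5) holds with $U_0=\frac12U(i_0)$. Both sides of Assumption~(2) then give $T^{1/\gamma_2}\lesssim i_0\lesssim T^{1/\gamma_1}$, so $N_0>i_0^{\gamma_3}\gtrsim T^{\gamma_3/\gamma_2}$; this is where $c_1<\gamma_3/\gamma_2$ is used. Your fixed choice also works, and the ``main obstacle'' you anticipate does not arise. Take $a=\lfloor T^{c}\rfloor$ with $1/\gamma_1<c\le c_2$; your window ``between $1/\gamma_1$ and $c_1/\gamma_3$'' is misstated, since $c_1/\gamma_3<1/\gamma_2<1/\gamma_1$. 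Then:
\begin{itemize}
\item $a^{\gamma_1}\gg T$ gives Condition~(5);
\item every $N$ used satisfies $N\le a\le T^{c_2}$;
\item $N>a^{\gamma_3}\ge T^{c_1}$ for large $T$, because $c\gamma_3>\gamma_3/\gamma_1>\gamma_3/\gamma_2>c_1$. Here $\gamma_1<\gamma_2$ is forced by Assumption~(2) at any $a\ge 2$.
\end{itemize}
So you only need $c_1<c\gamma_3$, which is weaker than the stated hypothesis. The paper's minimal $i_0$ instead keeps $U_0$ as small as possible (about $U(T^{1/\gamma_1})$). That matters only for the quantitative distance version in the subsequent Remark.

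\textbf{The step that fails.} You set $U_0=U(a)$ and take $N$ minimal with $U(N)\ge S\sigma_0^{k+1}+O(\delta_0)$. At the top of the range, $S\sigma_0^{k+1}=U(a)$, so you need $U(N)>U(a)$, hence $N>a$. Since $U$ is only non-decreasing, the hypotheses guarantee $U(b)>U(a)$ only once $b^{\gamma_1}\ge a^{\gamma_2}$. So $N$ can leave $[T^{c_1},T^{c_2}]$, where no $Q_N$ is given, and the slack is not harmless.

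\textbf{The fix} is the paper's factor of two: set $U_0=\frac12U(a)$ and take $N$ minimal with $U(N)\ge 2S\sigma_0^{k+1}$. Then:
\begin{itemize}
\item $N\le a$ exactly;
\item the surplus $U(N)-S\sigma_0^{k+1}\ge S\sigma_0^{k+1}>\tau_0\ge n\delta_0$ absorbs the $\frac{\log(n+1)}2\delta$ loss in Condition~(3);
\item $U(N-1)<2S\sigma_0^{k+1}$ gives $\frac12U(N-1)\sigma(N)<S\sigma_0^{k+2}$, exactly what Condition~(4) needs.
\end{itemize}
Condition~(5) then asks for $a^{\gamma_1}>2C_{n,k}T$, which still holds automatically. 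The criterion also requires $\tau_0<U_0$, which you did not check; it follows from Assumption~(2) once $a$ is large.
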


\begin{Rem}

Similarly to \iref{Remark algebraic independence criterion with dist}, one can show that under these conditions:
\[
    \log\operatorname{Dist}(\omega,V)
        \ge
    -U(T^{c_2}).
\]

\end{Rem}

\begin{Rem}

Our definition of $A(\omega)$ is not optimal and can be slightly improved. We work with this definition for simplicity of the calculations (see the proof of \iref{Lemma 2}).

\end{Rem}

\subsection{Proof of the modified criterion}
\begin{proof}[\textbf{Proof of \iref{Proposition modified transcendence criterion}}]
Let us take a positive real number $T$. Denote $a_1:=a_0^{\max\left(1,\frac1{\gamma_3}\right)}$.
We define $i_0$ to be the smallest natural number in $\mathbb N_{\geq a_1}$ which satisfies the following inequality:
\begin{equation}\label{Equation main criterion bound on T}
    2(k+1)T(\delta(i_0)+(k+1)(\tau(i_0)+n\cdot\delta(i_0))+3\,\log(n+1)\delta(i_0))
    <
    \dfrac{U(i_0)}{\delta(i_0)^k\sigma(i_0)^{k+1}}.
\end{equation}
Note that the existence of $i_0$ is ensured by \iref{Assumption growth of U}. We put
\begin{equation} \label{Equation four numbers}
\delta_0:=\delta(i_0), \quad \tau_0:=\tau(i_0)+n\cdot\delta(i_0), \quad \sigma_0:=\sigma(i_0), \quad U_0:=\frac12U(i_0).
\end{equation}
Since the left hand side of \iref{Equation main criterion bound on T} is equal to
\[
    \mathrm{const}\cdot T\cdot\delta(i_0) + \mathrm{const}\cdot T\cdot\tau(i_0)
\]
(for constants that depend only on $k,n$), we have that \iref{Equation main criterion bound on T} implies and is implied (for different constants) by the following inequality:
\[
    \mathrm{const}\cdot T
    <
    \frac{U}{(\delta+\tau)\delta^k\sigma^{k+1}}(i_0).
\]
Combining this inequality with \iref{Assumption growth of U} yields
\begin{equation}\label{Equation bound on i0}
    \frac1{\mathrm{const}}\cdot T^\frac1{\gamma_2}
        \le
    i_0
        \le
    \mathrm{const}\cdot T^{\frac1{\gamma_1}}.
\end{equation}

Let $S$ be any real number in the range $\left(\frac{\tau_0}{\sigma_0^{k+1}},\frac{U_0}{\sigma_0^{k+1}}\right]$. We define $N_0\in (0, i_0]$ to be the smallest natural number such that:
\begin{equation} \label{Equation choice of N0}
    2S
        \le
    \frac{U(N_0)}{\sigma_0^{k+1}}.
\end{equation}
Notice that the choice of $N_0$ implies
\begin{align}
    \tau_0&<\dfrac{U(N_0)}{2}, \label{Equation tau0 and U(N_0)}\\
    \frac{U(N_0-1)}{\sigma_0^{k+1}}
        &<
    2S, \label{Equation U(N_0-1)}
\end{align}
and
\begin{equation} \label{Equation upper bound of N_0}
    N_0
        \le
    i_0
        \le
    \mathrm{const}\cdot T^\frac1{\gamma_1}.
\end{equation}
Using \iref{Assumption U is polynomial} and \iref{Equation tau0 and U(N_0)}, we get
\[
U(\lceil i_0^{\gamma_3}\rceil)\le \tau(i_0)<U(N_0),
\] 
and, since $U$ is an increasing function,
\begin{equation} \label{Equation lower bound of N0}
     \frac1{\mathrm{const}}\cdot T^\frac{\gamma_3}{\gamma_2}
      \le
      i_0^{\gamma_3}
      <
      N_0.
\end{equation}

Using \iref{Equation upper bound of N_0} and \iref{Equation lower bound of N0} together with the choice of $c_1,c_2$, we get that for sufficiently large $T$,
\begin{equation*}
    T^{c_1}
        \le
    N_0
        \le
    T^{c_2}.
\end{equation*}

\medskip

By the statement of the proposition, we assume that there exists a family of polynomials $(Q_N)_{T^{c_1} \le N \le T^{c_2}}$ of degree bounded by $\delta(N)$ and log-height bounded by $\tau(N)$, satisfying \iref{Equation assumption QN}.

Let us show that the quadruple $(\delta_0, \tau_0, \sigma_0, U_0)$ determined by $T$ (see \eqref{Equation four numbers}) satisfies Conditions \eqref{Condition 1}, \eqref{Condition 2}, \eqref{Condition 3} and \eqref{Condition 4}  of \iref{Theorem Phillipon's criterion} for the point $\boldsymbol{\omega}:=(1:\omega)$, with the aid of \iref{Lemma 1} and \iref{Lemma 2}.

\medskip

We keep the notation above and put 
    \[
     Q^{\mathrm{proj}}_N
        :=
    X^{{\mathrm{deg}}\,Q_N}_0Q_N(\frac{X_1}{X_0},\ldots,\frac{X_n}{X_0}).
    \]
Since $\delta$, $\tau$, and $\sigma$ are increasing functions and $N_0 \le i_0$, we have
\begin{align*}
\delta(N_0) &\le \delta_0,\\
\sigma(N_0) &\le \sigma_0,\\
\tau(N_0) &\le \tau(i_0) \le \tau_0,
\end{align*}
so that the degree of $Q^{\mathrm{proj}}_{N_0}$ is bounded by $\delta_0$, and its log-height is bounded by $\tau_0$. 
This shows $Q^{\mathrm{proj}}_{N_0}$ satisfies \iref{Condition 1} of \iref{Theorem Phillipon's criterion}.
We use \iref{Lemma 1} \iref{Part 1 of lemma 1} for $Q^{\mathrm{proj}}_{N_0}$ to get, 
  \[
  {\mathrm{h}}_1(Q^{\mathrm{proj}}_{N_0})\le \tau(i_0)+n\delta(i_0)=\tau_0.
  \]
This shows $Q^{\mathrm{proj}}_{N_0}$ satisfies \iref{Condition 2} of \iref{Theorem Phillipon's criterion}.
Combining \eqref{Equation assumption QN} and \iref{Equation choice of N0}, we see that the polynomial $Q_{N_0}$ satisfies
  \[
  |Q_{N_0}(\omega)|
        <
    e^{-U(N_0)}
        \le
    e^{-\frac12 U(N_0)-S\sigma_0^{k+1}}.
  \]
The above inequality together with \iref{Lemma 1} \iref{Part 2 of lemma 1} ensures that as long as $T$ is sufficiently large, 
  \[
  \dfrac{{\mathrm{norm}}(Q^{\mathrm{proj}}_{N_0},\boldsymbol{\omega})}{{\mathrm{norm}}(Q^{\mathrm{proj}}_{N_0})}\le  e^{-\frac12 U(N_0)-S\sigma_0^{k+1}}e^{\frac{\log(n+1)}2\delta(N_0)}\le e^{-S\sigma^{k+1}_0}.
  \]
  
This shows $Q^{\mathrm{proj}}_{N_0}$ satisfies \iref{Condition 3} of \iref{Theorem Phillipon's criterion}.
The lower bound of \iref{Equation assumption QN} for $Q_{N_0}$ together with \iref{Equation U(N_0-1)} implies
\[
    e^{A(\omega)\cdot(\delta(N_0)+\tau(N_0))-S\sigma_0^{k+2}}
    \le 
    e^{A(\omega)\cdot(\delta(N_0)+\tau(N_0))-S\sigma_0^{k+1}\sigma(N_0)}
    <
    |Q_{N_0}(\omega)|.   
\]
Finally the choice of $A(\omega)$ and \iref{Lemma 2} for $r:=e^{-S\sigma^{k+2}_0}$ and $d:=c_3$ imply the polynomial $Q^{\mathrm{proj}}_{N_0}$ satisfies \iref{Condition 4} of \iref{Theorem Phillipon's criterion} for sufficiently large $T$.

\medskip 
 
Let $V$ be an affine variety defined over $\mathbb Q$, of dimension at most $k$, satisfying
\[
\dim V \le T, \quad \mathrm{h}(V) \le T.
\]
Applying \iref{Theorem Phillipon's criterion} to the quadruple $(\delta_0, \tau_0, \sigma_0, U_0)$ and the sequence of polynomials $(Q^{\mathrm{proj}}_N)_N$, we observe that, in view of \iref{Equation main criterion bound on T}, the quadruple $(\delta_0, \tau_0, \sigma_0, U_0)$ satisfies \iref{Condition assumption V} of \iref{Theorem Phillipon's criterion} for the projective closure of $V$ in $\mathbb{P}^n(\mathbb{C})$, and thus, the conclusion follows.
\end{proof}

\section{Construction of Small Polynomials from Low Transcendence Points}

\subsection{Statement of the proposition}

The aim of this section is to prove the following proposition.

\begin{Prop}\label{Proposition small polynomial}
Let $k,n\in \mathbb N$ with $k\le n-2$ and let $c\in\mathbb R_{>0}$ and $\alpha\in(0,n-k-1)$ be some constants. Then, for any sufficiently large integer $N$, the following holds.

Assume that there exists a collection of different points $\omega_1,...,\omega_{\lceil N^{n-k-\alpha}\rceil}\in\Gamma$ such that each $\omega_i$ belongs to a prime ideal $\mathfrak p_i\subseteq\mathbb Q[x_1,\ldots,x_n]$ of dimension $k$ and with $\operatorname{t}(\mathfrak p_i)\le c\cdot N^\alpha$. Then, there exists a non-zero polynomial $p\in\mathbb Z[x_1,\ldots,x_n]$ that satisfies:
\begin{align}
    \operatorname{t}(p)&\le N, \label{tp le}\\
    \max_{\omega\in \Gamma}\{|p(\omega)|\}&\le e^{-\frac1C\cdot N^{n-k-\alpha}}. \label{|p(omega)|}
\end{align}
where $C$ is a constant depending only on $\Gamma$ and on $c,\alpha$.

\end{Prop}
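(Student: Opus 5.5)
We construct $p$ by a Siegel-lemma pigeonhole argument that forces $p$ into each ideal $\mathfrak p_i$ (so $p$ vanishes exactly at every $\omega_i$), and then turn these many zeros on $\Gamma$ into a uniform smallness bound by standard complex analysis (Jensen/Schwarz on the disc). Fix a degree $d=\lfloor N/2\rfloor$ and a height bound $H$ with $\log H\le N/2$, so any $p$ with $\deg p\le d$ and coefficients bounded by $H$ satisfies \iref{tp le}.

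\textbf{Step 1: counting conditions via Hilbert functions.} For each $i$, Nesterenko's upper bound for the Hilbert function of a prime ideal of dimension $k$ \cite{Nesterenko1985EstimatesForTheCharacteristicFunctionOfAPrimeIdeal} shows that the number of independent $\mathbb Q$-linear conditions on polynomials of degree $\le d$ needed for $p\in\mathfrak p_i$ is
\[
\dim\big(\mathbb Q[x]_{\le d}/(\mathfrak p_i)_{\le d}\big)\lesssim \deg\mathfrak p_i\, d^{k}+(\text{lower order terms involving } \operatorname{h}(\mathfrak p_i)).
\]
This is at most $\operatorname{poly}(N^\alpha)\,d^k$. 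The key point is that the exponent of $d$ is only $k$.

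We then do not impose membership in all the $\mathfrak p_i$. Instead we select a subfamily $J$ of size $|J|\approx N^{n-k-\alpha}/\operatorname{poly}$, so that the total number of conditions is at most half of $\binom{d+n}{n}\sim N^n$. We need the linear conditions to have integer coefficients of controlled size. To get them, we express $p\bmod\mathfrak p_i$ via a Gröbner-type or Chow-form normal form with coefficients of log-size $\operatorname{poly}(N^\alpha)\cdot d$. Siegel's lemma then yields a nonzero integer $p$ with $\log\|p\|\lesssim \frac{M}{L-M}\cdot(\text{log-size})$, which is at most $N/2$ once the constants are tuned. This is the main obstacle: obtaining an effective integral presentation of $\mathfrak p_i$ in degree $d$ with height polynomial in $\operatorname{t}(\mathfrak p_i)$ and in $d$. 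If the exponent bookkeeping of $\alpha$ against $N^{n-k-\alpha}$ is tight, I would first sacrifice a constant fraction (absorbed into $C$) of the points, and possibly use several $\omega_i$ lying on the same prime.

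\textbf{Step 2: from zeros to smallness.} $p\circ\gamma$ is holomorphic on $D_1$, and $\log\max_{\overline{D_1}}|p\circ\gamma|\le \operatorname{t}(p)+O(\deg p)=O(N)$, since $\gamma(\overline{D_1})$ is compact. The preimages $z_i\in\overline{D_{1/e}}$ of the $\omega_i$ with $i\in J$ are distinct zeros of $p\circ\gamma$, and there are at least $m\gtrsim N^{n-k-\alpha}$ of them. Dividing by the Blaschke product $B$ of these $m$ zeros and applying the maximum principle on a disc of radius $r$ slightly less than $1$ gives, for $|z|\le 1/e$,
\[
|p\circ\gamma(z)|\le \max|p\circ\gamma|\cdot\max|B|\le e^{O(N)}\cdot\Big(\tfrac{2/e}{1+1/e^2}\Big)^{m}.
\]
Since $m\gg N$ (this uses $\alpha<n-k-1$), this is at most $e^{-m/C}$, which gives \iref{|p(omega)|}. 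The constant $C$ depends only on $\gamma$ and on $c,\alpha$, through the loss in Step 1.
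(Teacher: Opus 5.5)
Your two-step structure is the paper's. First force $p$ into every $\mathfrak p_i$ by a pigeonhole argument with $\operatorname{t}(p)=O(N)$. Then turn the $\gtrsim N^{n-k-\alpha}$ distinct zeros of $p\circ\gamma$ in $\overline{D_{1/e}}$ into a uniform smallness bound. Step~2 is fine: dividing by the Blaschke product is an equivalent substitute for the Ilyashenko--Yakovenko lemma the paper uses. Discarding a constant fraction of the points to get $\operatorname{t}(p)\le N$ on the nose is also harmless; the paper's own argument only yields $\operatorname{t}(p)\le \mathrm{const}\cdot N$.

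\textbf{The gap is in Step~1, and it is worse than the ``effective presentation'' you flag.} Even with an integral presentation of $(\mathfrak p_i)_{\le d}$ whose entries have log-size $\operatorname{poly}(N^\alpha)\cdot d$, as you propose, your estimate does not close.
\begin{itemize}
\item You have $L\asymp d^n$ unknowns and $M\asymp |J|\cdot N^\alpha d^k$ conditions. With $M\le L/2$, the bound $\log\|p\|\lesssim \frac{M}{L-M}\cdot(\text{log-size})$ gives $\log\|p\|\gtrsim N^{1+\alpha}$, not $N/2$. No tuning of constants removes a power of $N$.
\item If you compensate by shrinking $M/L$ to about $N^{-\alpha}$, you need $|J|\lesssim N^{n-k-2\alpha}$. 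Step~2 then only gives $e^{-N^{n-k-2\alpha}/C}$, which is weaker than the statement.
\item There is no slack to absorb this. The arithmetic freedom is $\log\#\{p:\operatorname{t}(p)\le N\}\asymp N^{n+1}$. The total constraint must be $\lesssim \sum_i \operatorname{t}(\mathfrak p_i)\,N^{k+1}\asymp N^{n-k}\cdot N^{k+1}$. So the height of $\mathfrak p_i$ may enter only additively, amortized over all $\asymp \deg\mathfrak p_i\, d^k$ conditions. It can never be multiplied by their number.
\item Relatedly, the height plays no role in the dimension count $\dim \mathbb Q[x]_{\le d}/(\mathfrak p_i)_{\le d}$. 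That count is fine only because it is linear in $\deg\mathfrak p_i$. The height belongs entirely to the arithmetic count, which is exactly where you must be sharp.
\end{itemize}

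\textbf{The missing ingredient is an arithmetic Hilbert-function estimate.} The paper takes it from Nesterenko \cite{Nesterenko1985EstimatesForTheCharacteristicFunctionOfAPrimeIdeal}. Let $\mathfrak q$ be the projectivization of $\mathfrak p\cap\mathbb Z[x]$, and let $\mathfrak M_{\mathfrak q}(T,t)$ be the set of residues modulo $\mathfrak q$ of homogeneous integer polynomials of degree $t$ and Height $\le T$. By his Theorem~3, taking $T=e^t$ gives $\log|\mathfrak M_{\mathfrak q}(e^t,t)|\le \mathrm{const}\cdot \operatorname{t}(\mathfrak p)\,t^{k+1}$ once $t\gtrsim \operatorname{t}(\mathfrak p)^{1/n}$.

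The paper then pigeonholes directly on the map $p\mapsto (p \bmod \mathfrak q_i)_i$:
\begin{itemize}
\item The source is the set of homogeneous $p$ with $\operatorname{t}(p)\le t$, of cardinality $\ge e^{t^{n+1}/n^n}$.
\item The target is $\prod_i \mathfrak M_{\mathfrak q_i}(e^t,t)$, of cardinality $\le e^{\mathrm{const}\cdot t^{k+1}\sum_i \operatorname{t}(\mathfrak p_i)}$.
\item Two colliding polynomials exist as soon as $t^{n-k}\gtrsim \sum_i\operatorname{t}(\mathfrak p_i)\asymp N^{n-k}$, i.e.\ for $t\asymp N$. The dehomogenized difference of the two is the required $p$ (Proposition~\ref{Proposition polynomial in intersection of ideals}).
\end{itemize}
This never requires an explicit integral presentation of $\mathfrak p_i$.

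If you want to stay within linear algebra, the corresponding fix is the Bombieri--Vaaler form of Siegel's lemma. There the bound is controlled by the height of the subspace $\bigcap_i (\mathfrak p_i)_{\le d}$, which is at most the product of the heights of the $(\mathfrak p_i)_{\le d}$. Each of those must be bounded by an arithmetic estimate of the same kind, of size $e^{O(\operatorname{t}(\mathfrak p_i)d^{k+1})}$, rather than by a row count times a maximal entry size.
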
 
\subsection{Auxiliary results}
To prove \iref{Proposition small polynomial}, we first prepare several auxiliary results.
The upcoming proposition ensures the existence of a non-zero homogeneous polynomial $p$ with moderately bounded $\operatorname{t}(p)$ contained in a given family of homogeneous prime ideals of fixed dimension defined over a number field. This result relates to Bound~\eqref{tp le} in \iref{Proposition small polynomial} and is achieved by using the following:

\begin{ThmCor}[\mycite{Theorem~3}{Nesterenko1985EstimatesForTheCharacteristicFunctionOfAPrimeIdeal}]\label{Theorem nesterenko bound on amount of residue polynomials}

Let $\mathfrak p\subseteq\mathbb Z[X_0,...,X_n] $ ($n\geq1$) be a projective, $k$-dimensional, prime ideal with $\mathfrak p\cap\mathbb Z=(0)$, let $T\in\mathbb R^{>0}$ and let $t\in\mathbb N$ such that:

\[
    e^{(k+1)t}
        \geq
        \max(30,2k\deg(\mathfrak p)t+3\deg(\mathfrak p)\log\operatorname{H}(\mathfrak p)+6n(k+1)\deg(\mathfrak p)^2+3(k+1)\deg(\mathfrak p)^3).
\]

Then,

\[
    \log|\mathfrak M_\mathfrak p(T,t)|
        \leq
    6^{k+1}((n^2+n+2)\deg(\mathfrak p)t^{k+1}+\log\operatorname{H}(\mathfrak p)t^{k+1}+\deg(\mathfrak p)t^k\log T),
\]

where $\mathfrak{M}_{\mathfrak{p}}(T,t)$ denote the set of residues modulo $\mathfrak{p}$ of homogeneous polynomials in $\mathbb{Z}[X_0, \ldots, X_n]$ of degree $t$ and Height at most $T$.

\end{ThmCor}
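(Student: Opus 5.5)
The plan is to bound $|\mathfrak M_\mathfrak p(T,t)|$ in two factors: the \emph{rank} of the $\mathbb Z$-module of residues in degree $t$, and the \emph{arithmetic size} of each residue once written in a controlled basis. Set $A:=\mathbb Z[X_0,\ldots,X_n]/\mathfrak p$ and let $A_t$ be its degree-$t$ piece. Since $\mathfrak p\cap\mathbb Z=(0)$, $A_t$ is a torsion-free finitely generated $\mathbb Z$-module. Its rank is the Hilbert function $H_\mathfrak p(t)$, and I expect it to be at most a constant times $\deg(\mathfrak p)\,t^k$. Each element of $\mathfrak M_\mathfrak p(T,t)$ is then a lattice point of $A_t\otimes\mathbb R$, and I would count those lying in the image of the box $\{\text{coefficients}\le T\}$.

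\textbf{Step 1 (Noether normalization with small coefficients).} Choose $k+1$ linear forms $Y_0,\ldots,Y_k$ with integer coefficients of size $O(n)$ (or $O(\deg\mathfrak p)$) such that $A$ is integral, hence finite, over $\mathbb Z[Y_0,\ldots,Y_k]$ after inverting a controlled integer. Such forms exist by avoiding a proper Zariski closed set of bad choices whose degree is bounded in terms of $\deg\mathfrak p$; this is where the $n^2$ and $\deg(\mathfrak p)^2,\deg(\mathfrak p)^3$ terms should enter. Then $A\otimes\mathbb Q$ is generated over $\mathbb Q[Y]$ by at most $\deg\mathfrak p$ elements $m_1,\ldots,m_{\deg\mathfrak p}$ of bounded degree. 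Consequently $A_t\otimes\mathbb Q$ is spanned by the products $Y^\beta m_j$ with $|\beta|\le t$. This gives $H_\mathfrak p(t)\le\deg(\mathfrak p)\binom{t+k}{k}$, which is $\lesssim\deg(\mathfrak p)t^k$ once $t$ is large. The hypothesis $e^{(k+1)t}\ge\ldots$ is exactly what makes $t$ large enough for this and for the error terms below.

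\textbf{Step 2 (reduction with height control).} Given a homogeneous $f$ of degree $t$ and Height $\le T$, I would reduce $f$ modulo $\mathfrak p$ to the normal form $\sum_{\beta,j} c_{\beta j}Y^\beta m_j$. For this I would use the integral-dependence relations of each $X_i$ over $\mathbb Z[Y]$. These come from the Chow form of $\mathfrak p$ specialized to the chosen linear forms, so their log-height is $\lesssim\log\operatorname{H}(\mathfrak p)+O(\deg\mathfrak p\cdot n\log n)$. Tracking heights through at most $t$ reduction steps of a degree-$t$ monomial, I would aim to show that each $c_{\beta j}$ is a rational number with denominator dividing a fixed $D_t$ and with
\[
\log|c_{\beta j}|\le\log T+O\Big(t\big(n^2+\tfrac{\log\operatorname{H}(\mathfrak p)}{\deg\mathfrak p}\big)+k\log t\Big).
\]
The residue of $f$ is determined by the vector $(c_{\beta j})$, and the possible such vectors lie in a box of dimension $\lesssim\deg(\mathfrak p)t^k$. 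Counting points of $D_t^{-1}\mathbb Z^{\dim}$ in that box gives
\[
\log|\mathfrak M_\mathfrak p(T,t)|\lesssim\deg(\mathfrak p)t^k\Big(\log T+t\big(n^2+\tfrac{\log \operatorname{H}(\mathfrak p)}{\deg\mathfrak p}\big)+\log D_t\Big).
\]
This matches the shape $6^{k+1}\big((n^2+n+2)\deg(\mathfrak p)t^{k+1}+\log\operatorname{H}(\mathfrak p)t^{k+1}+\deg(\mathfrak p)t^k\log T\big)$, and the factor $6^{k+1}$ should absorb the binomial constants.

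\textbf{Main obstacle.} The delicate point is Step 2, specifically the height bookkeeping. A naive division algorithm multiplies coefficients by the height of the relations at each of $t$ steps. That contributes $t\log\operatorname{H}(\mathfrak p)$ per coefficient, and multiplying by $\deg(\mathfrak p)t^k$ coefficients gives $\deg(\mathfrak p)\log\operatorname{H}(\mathfrak p)t^{k+1}$, which is too large by the factor $\deg\mathfrak p$. The denominators $D_t$ cause the same kind of loss. To fix this, I would first try to avoid explicit division and argue directly with lattices. The map $\mathbb Z[X]_t\to A_t$ sends the box onto a set contained in a region of volume roughly $(T\cdot\#\text{monomials})^{H_\mathfrak p(t)}$, measured in a norm on $A_t\otimes\mathbb R$ given by integration over $V(\mathfrak p)$. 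The number of lattice points is then at most this volume divided by $\operatorname{covol}(A_t)$, up to a factor of $2^{H_\mathfrak p(t)}$. It would remain to bound the covolume below, i.e. $-\log\operatorname{covol}(A_t)\lesssim(\log\operatorname{H}(\mathfrak p)+n^2\deg\mathfrak p)t^{k+1}$. This is the arithmetic Hilbert–Samuel estimate, which I would derive by induction on $k$ by cutting with a hyperplane $Y_k$ and using the additivity of $\operatorname{H}$ and $\deg$ under intersection. The base case $k=0$ is the case where $\mathfrak p$ corresponds to a Galois orbit of points, where everything is explicit.
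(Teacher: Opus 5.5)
The paper does not prove this statement; it imports it verbatim from Nesterenko (1985). Your proposal therefore has to stand on its own, and it has a genuine gap: the one estimate that carries the content of the theorem is assumed rather than proved. You correctly see that the normal-form reduction of Step~2 loses a factor $\deg\mathfrak p$ in the height term and leaves the denominators $D_t$ uncontrolled. You then reduce everything to
$-\log\operatorname{covol}(A_t)\lesssim(\log\operatorname{H}(\mathfrak p)+n^2\deg\mathfrak p)\,t^{k+1}$.

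The lattice-point count in that reduction is fine. To make it rigorous, note that the zonotope $Z=\sum_m[-1,1]\,\bar m$ spanned by the images of the degree-$t$ monomials contains a fundamental domain of $A_t$. Hence $|\mathfrak M_{\mathfrak p}(T,t)|\le\operatorname{vol}((T+1)Z)/\operatorname{covol}(A_t)$. The covolume bound itself, however, is needed uniformly for every $t$ the hypothesis allows, and the hypothesis permits $t$ as small as order $\log(\deg\mathfrak p+\log\operatorname{H}(\mathfrak p))$. At that level it is an effective arithmetic Hilbert--Samuel inequality, which is essentially the theorem itself.

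The induction you sketch for it does not go through as described. From $0\to A_{t-1}\xrightarrow{\,Y_k\,}A_t\to(A/Y_kA)_t\to0$, the covolume of $A_t$ splits into three factors:
\begin{enumerate}
\item the covolume of $Y_kA_{t-1}$ in the metric of $A_t$;
\item a torsion index, because $A/Y_kA$ can have $\mathbb Z$-torsion from vertical components of the section (e.g.\ $(X_1-2X_0)+(X_1)=(2X_0,X_1)$), so $Y_kA_{t-1}$ need not be saturated in $A_t$;
\item the covolume of the torsion-free part of $(A/Y_kA)_t$ in the \emph{quotient} metric.
\end{enumerate}
None of these is controlled by the inductive hypothesis:
\begin{itemize}
\item $\mathfrak p+(Y_k)$ is not prime, not saturated, and may meet $\mathbb Z$, so the $(k-1)$-dimensional statement does not apply to it.
\item The quotient metric is not an intrinsic metric on $V(\mathfrak p)\cap\{Y_k=0\}$.
\item Comparing $\operatorname{covol}(Y_kA_{t-1})$ with $\operatorname{covol}(A_{t-1})$ needs a \emph{lower} bound on the determinant of multiplication by $Y_k$ between the $L^2$-metrized spaces. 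That is an archimedean ``$\int\log|Y_k|$'' term you would have to control in finite degree $t$.
\end{itemize}
What you call ``additivity of $\operatorname{H}$ and $\deg$'' is really the arithmetic B\'ezout \emph{inequality} for the cycle $V\cdot\{Y_k=0\}$. It says nothing about these degree-$t$ module-theoretic quantities, so the decisive input is still missing.

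Step~1 also contains a false claim. $A\otimes\mathbb Q$ has rank $\deg\mathfrak p$ over $\mathbb Q[Y]$, but it is generated by $\deg\mathfrak p$ elements only if it is free, i.e.\ Cohen--Macaulay. For the rational quartic $\mathbb Q[s^4,s^3t,st^3,t^4]$ over $\mathbb Q[s^4,t^4]$ you need five generators: $1,s^3t,st^3,s^6t^2,s^2t^6$. Moreover, generators can have degree of order $\deg\mathfrak p$. A hypothesis that lets $t$ be only logarithmic in $\deg\mathfrak p$ cannot absorb that, contrary to your remark. The bound $H_{\mathfrak p}(t)\le\deg(\mathfrak p)\binom{t+k}{k}$ is true, but it needs a different argument.
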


\begin{Prop}\label{Proposition polynomial in intersection of ideals}
Let $\{\mathfrak p_1,\ldots,\mathfrak p_s\}$ be a collection of prime ideals in $\mathbb Q[x_1,....,x_n]$ of dimension $k$.
Then, there exists a non-zero homogeneous polynomial $p\in\mathbb Z[x_1,...,x_n]\cap\bigcap_i\mathfrak p_i$ satisfying
\[
    \operatorname{t}(p)\le C\cdot\left(\sum_i\operatorname{t}(\mathfrak p_i)\right)^\frac1{n-k}
\]

for some constant $C$ depending only on $n$.
\end{Prop}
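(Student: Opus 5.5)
The plan is a Siegel-type pigeonhole argument: bound the number of residues of candidate polynomials modulo each $\mathfrak p_i$ using \iref{Theorem nesterenko bound on amount of residue polynomials}. Following the convention of \iref{Subsection prelim}, I identify each $\mathfrak p_i$ with its homogenization $\mathfrak p_i^{\mathrm{proj}}\subseteq\mathbb Z[X_0,\ldots,X_n]$, i.e.\ the ideal of the projective closure of $V(\mathfrak p_i)$. This is a homogeneous prime of projective dimension $k$ with $\mathfrak p_i^{\mathrm{proj}}\cap\mathbb Z=(0)$, and it has the same degree and height as $\mathfrak p_i$. The polynomial $p$ is produced as a homogeneous element $p^{\mathrm{proj}}$ of degree $t$ in $\mathbb Z[X_0,\ldots,X_n]\cap\bigcap_i\mathfrak p_i^{\mathrm{proj}}$. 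Under the identification $p(x)=p^{\mathrm{proj}}(1,x)$, this is exactly a homogeneous polynomial in the sense of the statement, and it lies in every $\mathfrak p_i$. Both $\deg$ and $\operatorname{h}$ are read off $p^{\mathrm{proj}}$.

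\textbf{Choice of parameters.} Put $S:=\sum_i\operatorname{t}(\mathfrak p_i)$. Take $t:=\lceil C_0 S^{1/(n-k)}\rceil$ and Height bound $H:=e^{t}$, with $C_0$ a large constant depending on $n$. The number of homogeneous polynomials of degree $t$ in $X_0,\ldots,X_n$ with coefficients in $[0,H]\cap\mathbb Z$ has logarithm at least $\binom{t+n}{n}\log H\ge \frac{t^{n+1}}{n!}$.

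\textbf{Counting residues.} By \iref{Theorem nesterenko bound on amount of residue polynomials}, applied with $T:=H$ to each $\mathfrak p_i^{\mathrm{proj}}$, the logarithm of the number of residues modulo $\mathfrak p_i^{\mathrm{proj}}$ is at most
\begin{equation*}
6^{k+1}\bigl((n^2+n+2)\deg\mathfrak p_i\,t^{k+1}+\operatorname{h}(\mathfrak p_i)\,t^{k+1}+\deg\mathfrak p_i\,t^{k+1}\bigr)\le C_1\operatorname{t}(\mathfrak p_i)\,t^{k+1}.
\end{equation*}
Summing over $i$, the map to $\prod_i\mathbb Z[X]/\mathfrak p_i^{\mathrm{proj}}$ has image of logarithmic size at most $C_1 S t^{k+1}$. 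This is less than $t^{n+1}/n!$ once $t^{n-k}> n!\,C_1 S$, which our choice of $C_0$ guarantees. Hence two distinct polynomials $P_1,P_2$ share all residues. Their difference $p^{\mathrm{proj}}:=P_1-P_2$ is nonzero and homogeneous of degree $t$, has Height at most $H$, and lies in every $\mathfrak p_i^{\mathrm{proj}}$. Therefore $\operatorname{t}(p)\le t+\log H=2t\le C S^{1/(n-k)}$.

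\textbf{Main obstacle.} The main obstacle is the hypothesis of \iref{Theorem nesterenko bound on amount of residue polynomials}. For each $i$ it requires
\begin{equation*}
e^{(k+1)t}\ge\max\bigl(30,\ 2k\deg\mathfrak p_i\,t+3\deg\mathfrak p_i\operatorname{h}(\mathfrak p_i)+6n(k+1)\deg\mathfrak p_i^2+3(k+1)\deg\mathfrak p_i^3\bigr).
\end{equation*}
The right-hand side is polynomial in $\operatorname{t}(\mathfrak p_i)\le S$, while $t\gtrsim S^{1/(n-k)}$, so the inequality holds as soon as $t\ge C_2\log S$. This is automatic for $S$ beyond a threshold depending only on $n$. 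For small $S$ I simply enlarge $t$ to a constant depending on $n$, absorbed into $C$. I also have to check two compatibility points: the Height normalization of the Chow form matches Nesterenko's $\operatorname{H}(\mathfrak p)$ up to $O_n(\deg)$, and homogenization preserves $\operatorname{t}$. Both are standard facts, contributing only constants depending on $n$.
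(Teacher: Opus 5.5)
Your proposal is correct and follows the same route as the paper. Both arguments use a Siegel-type pigeonhole over homogeneous integer polynomials with Height $e^t$, with Nesterenko's residue bound giving $\log|\mathfrak M_{\mathfrak q_i}(e^t,t)|\le \mathrm{const}\cdot\operatorname{t}(\mathfrak p_i)\,t^{k+1}$, so a collision occurs once $t^{n-k}$ exceeds a constant times $\sum_i\operatorname{t}(\mathfrak p_i)$. Your version is slightly tidier than the paper's: you fix the degree to be exactly $t$, matching the definition of $\mathfrak M_{\mathfrak p}(T,t)$, and you read off $\operatorname{t}(p)\le 2t$ directly.
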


\begin{proof}

Let $t$ be a natural number with $t \ge 30$, and set $T := e^t$. Assume that
\[
    \forall i:\quad t \ge \mathrm{const} \cdot \operatorname{t}(\mathfrak{p}_i)^{\frac1n},
\]
for a sufficiently large constant. We apply \iref{Theorem nesterenko bound on amount of residue polynomials} to each $\mathfrak{q}_i$ which is defined as the projectivization of each $\mathfrak{p}_i \cap \mathbb{Z}[x_1, \ldots, x_n]$, with our parameters $T$ and $t$. We verify the growth condition required for the application of the theorem (notice that $\deg(\mathfrak{q}_i)=\deg(\mathfrak{p}_i)$ and $\operatorname{h}(\mathfrak{q}_i)=\operatorname{h}(\mathfrak{p}_i)$):
\[
    e^{(k+1)t} \ge e^t \ge \frac{t^{3n+1}}{(3n+1)!}
\]
dominates the right-hand side:
\[
    2k\deg(\mathfrak{p}_i)t + 3\deg(\mathfrak{p}_i)\operatorname{h}(\mathfrak{p}_i) 
    + 6n(k+1) \deg(\mathfrak{p}_i)^2 + 3(k+1)\deg(\mathfrak{p}_i)^3 
    \le \mathrm{const} \cdot \operatorname{t}(\mathfrak{p}_i)^3 t.
\]
Hence, the condition is satisfied under our assumption on $t$. Thus, we obtain the bound
\begin{align*}
    \forall i:&\\
    &\log \left| \mathfrak{M}_{\mathfrak{q}_i}(T,t) \right|
    \le 6^{k+1} \left( (n^2 + n + 2) \deg(\mathfrak{q}_i) t^{k+1} 
        + \operatorname{h}(\mathfrak{p}_i) t^{k+1} 
        + \deg(\mathfrak{p}_i)t^k \log T \right)
\end{align*}

which gives us the estimate
\begin{equation} \label{Equation bound of M(T,t)}
    \log \left| \mathfrak{M}_{\mathfrak{q}_i}(T, t) \right| 
    \le \mathrm{const} \cdot \operatorname{t}(\mathfrak{p}_i) \, t^{k+1}.
\end{equation}
We now aim to find two distinct homogeneous polynomials $p_1, p_2 \in \mathbb{Z}[X_0, \ldots, X_n]$ with $\operatorname{t}(p_1), \operatorname{t}(p_2) \le t$ such that
\[
    \forall i: \quad p_1 \equiv p_2 \mod \mathfrak{q}_i.
\]
Let us denote by $\mathscr{S}(t)$ the set of homogeneous polynomials $p\in \mathbb{Z}[X_0, \ldots, X_n]$ with $\operatorname{t}(p)\le t$:
\[
    \mathscr{S}(t) = \left\{ p \in \mathbb{Z}[X_0, \ldots, X_n] \mid p: \text{ homogeneous, } \operatorname{t}(p) \le t \right\}.
\]
Consider the natural map between finite sets:
\[
    \mathscr{S}(t) \longrightarrow \prod_i \mathfrak{M}_{\mathfrak{q}_i}(T, t), \quad p \mapsto \left(...,p \bmod \mathfrak{q}_i,...\right)_i.
\]
By the pigeonhole principle, if
\[
    |\mathscr{S}(t)| > \prod_i \left| \mathfrak{M}_{\mathfrak{q}_i}(T, t) \right|,
\]
then such $p_1$ and $p_2$ exist. Now, we estimate the cardinality of $\mathscr{S}(t)$:

\[
    |\mathscr{S}(t)| \ge (1 + \lfloor T \rfloor)^{\left( 1 + \left\lfloor \frac{t}{n} \right\rfloor \right)^n} \ge T^{(\frac tn)^n} = e^{\frac{t^{n+1}}{n^n}}.
\]
On the other hand, thanks to \iref{Equation bound of M(T,t)}, 
\[
    \prod_i \left| \mathfrak{M}_{\mathfrak{q}_i}(T, t) \right| 
    \le e^{\mathrm{const} \cdot t^{k+1} \sum_i \operatorname{t}(\mathfrak{p}_i)}.
\]
Therefore, it suffices that
\[
    \frac{t^{n+1}}{n^n} > \mathrm{const} \cdot t^{k+1} \sum_i \operatorname{t}(\mathfrak{p}_i),
\]
which reduces to
\[
    t^{n - k} > \mathrm{const} \cdot \sum_i \operatorname{t}(\mathfrak{p}_i).
\]
This inequality holds whenever
\[
    t \ge \mathrm{const} \cdot \left( \sum_i \operatorname{t}(\mathfrak{p}_i) \right)^{\frac1{n-k}}.
\]

In conclusion, the following set of conditions suffices:
\begin{align*}
    t &\ge \mathrm{const} \cdot \left( \sum_i \operatorname{t}(\mathfrak{p}_i) \right)^{\frac1{n-k}}, \\
    t &\ge \mathrm{const} \cdot \max_i \left\{ \operatorname{t}(\mathfrak{p}_i)^{\frac1n} \right\}, \\
    t &\ge 30.
\end{align*}
All of these are simultaneously satisfied by choosing
\[
    t \ge \mathrm{const} \cdot \left( \sum_i \operatorname{t}(\mathfrak{p}_i) \right)^{\frac1{n-k}},
\]
with a sufficiently large constant. Once such polynomials $p_1$ and $p_2$ are found, the affinization of their difference $p(x_1,...,x_n) := (p_1 - p_2)(1,x_1,...,x_n)$ lies in $\mathbb Z[x_1,...,x_n]\cap\bigcap_i \mathfrak{p}_i$ and satisfies the required bounds.
\end{proof}

The upcoming proposition estimates the maximum value on $\Gamma$ of a polynomial that has sufficiently many zeroes on $\Gamma$. This is related to Bound \eqref{|p(omega)|} in \iref{Proposition small polynomial} and uses the following:

\begin{ThmCor}[\mycite{Lemma~1}{IlyashenkoYakovenko1996CountingRealzerosOfAnalyticFunctionsSatisfyingLinearOrdinaryDifferentialEquations}]\label{Theorem ilyashenko yakovenko bound on number of zeroes of analytic function}

Let $f:\overline{D_1}\to\mathbb C$ be analytic on $D_1$ and continuous on $\overline{D_1}$. Then:

\[
    N_{\overline{D_{\frac1e}}}(f)
        \leq
    C\cdot\log\frac{\displaystyle\max_{z\in \overline{D_1}} |f(z)|}{\displaystyle\max_{z\in \overline{D_{\frac1e}}} |f(z)|},
\]

where $\operatorname{N}_{\overline{D_{\frac1e}}}(f)$ denotes the number of zeroes of $f$ in $\overline{D_{\frac1e}}$ and $C$ is some absolute constant.

\end{ThmCor}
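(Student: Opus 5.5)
The plan is to factor out the zeroes of $f$ with a finite Blaschke product and then compare maxima on the two discs. We may assume $f\not\equiv 0$, since otherwise the right hand side is undefined. Then $f$ has finitely many zeroes in $\overline{D_{\frac1e}}$. Write them as $z_1,\ldots,z_N$, repeated according to multiplicity, so that $N=N_{\overline{D_{\frac1e}}}(f)$. Set
\[
  B(z):=\prod_{j=1}^N \frac{z-z_j}{1-\overline{z_j}z}.
\]
Since $|z_j|\le\frac1e<1$, the function $B$ is analytic on a neighbourhood of $\overline{D_1}$. It satisfies $|B|=1$ on $|z|=1$, and it vanishes exactly at the $z_j$ with the right multiplicities.

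Next consider $g:=f/B$. It is analytic on $D_1$, because the zeroes of $B$ are cancelled by zeroes of $f$ of at least the same order. It is also continuous on $\overline{D_1}$ minus the $z_j$, and in fact on all of $\overline{D_1}$ since the singularities are removable. On $|z|=1$ we have $|g|=|f|\le M:=\max_{\overline{D_1}}|f|$. To avoid any subtlety with continuity up to the boundary, we apply the maximum principle on $\overline{D_\rho}$ and let $\rho\to1$, using that $|B|\to1$ uniformly on $|z|=\rho$. This gives $|g|\le M$ throughout $D_1$.

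The key estimate is a uniform bound on $|B|$ over the small disc. For $|z|\le\frac1e$ and $|z_j|\le\frac1e$ we have $|z-z_j|\le\frac2e$ and $|1-\overline{z_j}z|\ge1-e^{-2}$. Hence each factor of $B$ has modulus at most
\[
  q:=\frac{2/e}{1-e^{-2}}=\frac{2e}{e^2-1}\approx0.85<1 .
\]
Therefore $|B(z)|\le q^N$ on $\overline{D_{\frac1e}}$, and so
\[
  m:=\max_{\overline{D_{\frac1e}}}|f|=\max_{\overline{D_{\frac1e}}}|g||B|\le M q^N.
\]
Taking logarithms gives $N\le \frac{1}{\log(1/q)}\log\frac Mm$, which is the claim with the absolute constant $C=1/\log\frac{e^2-1}{2e}$.

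There is no real obstacle here. The only points needing care are the boundary behaviour, which is handled by the limiting argument $\rho\to1$, and the numerical check that $q<1$. The latter is exactly why the inner radius $\frac1e$ works. A naive Jensen-formula argument centred at a maximum point of $|f|$ on $\overline{D_{\frac1e}}$ fails for this radius, because the required disc does not fit inside $D_1$; that is why I prefer the Blaschke product route.
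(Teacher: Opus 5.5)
Your proof is correct and complete. You divide out the finite Blaschke product of the zeroes in $\overline{D_{\frac1e}}$, bound $|f/B|\le M$ by the maximum principle, and bound each Blaschke factor on $\overline{D_{\frac1e}}$ by $q=\frac{2e}{e^2-1}=1/\sinh 1<1$. This gives the claim with $C=1/\log\sinh 1\approx 6.2$, even with zeroes counted with multiplicity.

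The paper gives no proof of this statement. It imports it as Lemma~1 of Ilyashenko--Yakovenko, where it serves as a Jensen-type bound for the Bernstein index. Your argument is the standard self-contained way to prove such a bound. It uses the same mechanism as the repaired Jensen argument: precompose $f$ with the disc automorphism $z\mapsto\frac{z+a}{1+\overline{a}z}$, where $a$ is a maximum point of $|f|$ on $\overline{D_{\frac1e}}$, and then apply Jensen's inequality at $0$. This is exactly what fixes the failure of the naive Jensen approach you point out. Compared with the citation, your route gives an explicit constant and the stronger count with multiplicities. That is more than Proposition~\ref{Proposition polynomial with many zeroes is small} needs, since it only uses distinct zeroes.

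One correction to your closing remark: the radius $\frac1e$ is not what makes the method work. Your estimate $|1-\overline{z_j}z|\ge 1-r^2$ only needs the inner radius $r$ to satisfy $r<\sqrt2-1$. With the sharp pseudo-hyperbolic bound $\frac{|z-z_j|}{|1-\overline{z_j}z|}\le\frac{2r}{1+r^2}$ for $|z|,|z_j|\le r$, the same argument works for every $r<1$. For $r=\frac1e$ it improves your constant to $C=1/\log\cosh 1\approx 2.3$.
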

 
\begin{Prop}\label{Proposition polynomial with many zeroes is small}
There exists a constant $C$ depending only on $\Gamma$ such that any non-zero polynomial $p \in \overline{\mathbb{Q}}[x_1, \ldots, x_n]$ with at least $s$ distinct zeroes in $\Gamma$ satisfies the following inequality:
\[
    \max_{\omega \in \Gamma}\{|p(\omega)|\} \le
    e^{C\cdot \operatorname{t}(p) - \frac1C s}.
\]
\end{Prop}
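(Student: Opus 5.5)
We will pull $p$ back to the disc and apply \iref{Theorem ilyashenko yakovenko bound on number of zeroes of analytic function} to the analytic function $f:=p\circ\gamma:\overline{D_1}\to\mathbb C$. This $f$ is analytic on $D_1$ and continuous on $\overline{D_1}$, since $\gamma$ is.

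First, the zeroes. Each of the $s$ distinct zeroes of $p$ in $\Gamma=\gamma(\overline{D_{1/e}})$ has at least one preimage in $\overline{D_{1/e}}$. Distinct points of $\Gamma$ have distinct preimages, so $f$ has at least $s$ zeroes in $\overline{D_{1/e}}$. We must exclude $f\equiv 0$. In that case $\max_\Gamma|p|=0$ and the inequality holds trivially, so we may assume $f\not\equiv0$.

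Second, the upper bound on the large disc. Since $\gamma(\overline{D_1})$ is compact, set $R:=1+\max_{z\in\overline{D_1}}\|\gamma(z)\|_\infty$, which depends only on $\gamma$. Then
\[
\max_{\overline{D_1}}|f|\le \#\{\text{monomials}\}\cdot \operatorname{H}(p)\cdot R^{\deg p}\le \binom{n+\deg p}{n}e^{\operatorname{h}(p)}R^{\deg p}.
\]
This is at most $e^{C\operatorname{t}(p)}$, because $\binom{n+d}{n}\le 2^{n+d}$. If $p$ has algebraic coefficients rather than integer ones, we interpret the height via the house of the coefficients, and the same bound holds up to a constant.

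Third, apply the Ilyashenko--Yakovenko lemma:
\[
s\le N_{\overline{D_{1/e}}}(f)\le C_0\log\frac{\max_{\overline{D_1}}|f|}{\max_{\overline{D_{1/e}}}|f|}\le C_0\Bigl(C\operatorname{t}(p)-\log\max_{\Gamma}|p|\Bigr).
\]
Here $\max_{\overline{D_{1/e}}}|f|=\max_\Gamma|p|$. Rearranging gives $\log\max_\Gamma|p|\le C\operatorname{t}(p)-s/C_0$. Enlarging the constant yields the claimed form $e^{C\operatorname{t}(p)-\frac1C s}$.

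The main obstacle is only the bookkeeping in the second step: defining the height of a polynomial over $\overline{\mathbb Q}$ so that the coefficient bound $e^{O(\operatorname{t}(p))}$ holds. This is needed only for integer $p$ later, where it is immediate. Everything else is a direct application of the cited lemma.
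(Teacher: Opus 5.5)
Your proposal is correct and is the paper's proof: apply the Ilyashenko--Yakovenko lemma to $f=p\circ\gamma$, bound $\max_{\overline{D_1}}|f|$ by $e^{C\operatorname{t}(p)}$ from the coefficients, and rearrange using $\max_{\overline{D_{1/e}}}|f|=\max_\Gamma|p|$. The paper leaves implicit your exclusion of $f\equiv 0$, your monomial-count estimate and your remark on heights over $\overline{\mathbb Q}$, and you handle all three correctly.
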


\begin{proof}

We apply \iref{Theorem ilyashenko yakovenko bound on number of zeroes of analytic function} to the composition $f := p \circ \gamma$. Since we assume that $p$ has at least $s$ distinct zeroes on $\Gamma$, we obtain the lower bound:

\[
    e^{\frac1{\mathrm{const}} s}
    \le
    \frac{\displaystyle\max_{z\in\overline{D_1}} |p(\gamma(z))|}{\displaystyle\max_{z\in\overline{D_\frac1e}} |p(\gamma(z))|}.
\]
Moreover, observe that
\[
    \max_{\omega \in\overline{D_1}} |p(\gamma(z))|
    \le
    e^{\mathrm{const} \cdot \operatorname{t}(p)}.
\]
Combining these estimates, we conclude that
\[
    \max_{\omega \in \Gamma}\{|p(\omega)|\}
    \le
    e^{\mathrm{const} \cdot \operatorname{t}(p) - \frac1{\mathrm{const}} s},
\]
as desired.

\end{proof}

\subsection{Proof of Proposition~\ref{Proposition small polynomial}}
\begin{proof}
Let $N$ be a positive natural number. Assume that there exists a collection of different points $\omega_1,...,\omega_{\lceil N^{n-k-\alpha}\rceil}\in\Gamma$ such that each $\omega_i$ belongs to a prime ideal $\mathfrak p_i$ of dimension $k$ and with $\operatorname{t}(\mathfrak p_i)\le c\cdot N^\alpha$.
By \iref{Proposition polynomial in intersection of ideals} applied to the $\mathfrak p_i$, there exists a non-zero polynomial $p \in\mathbb Z[x_1,..,x_n]\cap\bigcap_i\mathfrak{p}_i$ such that
\begin{equation} \label{Equation bound of t(p)}
    \operatorname{t}(p) \le \mathrm{const} \cdot N.
\end{equation}
In particular, we have
\[
  p(\omega_i) = 0 \;\;\text{for all} \;i.
\]
Applying \iref{Proposition polynomial with many zeroes is small} together with \iref{Equation bound of t(p)}, we obtain
\begin{align*}
    \max_{\omega \in \Gamma}\{|p(\omega)|\}&\le e^{\mathrm{const} \cdot N - \frac1{\mathrm{const}} N^{n-k-\alpha}}.
\end{align*}
Since $n-k-\alpha>1$, it follows that for sufficiently large $N$,
\[
     \max_{\omega \in \Gamma}\{|p(\omega)|\} \le e^{-\frac1{\mathrm{const}} \cdot N^{n-k-\alpha}}.
\]
This completes the proof of the assertion.
\end{proof}

\section{The Main Theorem}

In this section, we assume that $k<\sqrt n-1$.

We start by defining some parameters to be used in the rest of this section.

\subsection{Choice of parameters}\label{Subsection choice of parameters}

Choose $\alpha,\varepsilon,\eta,\gamma_1,\gamma_2,\gamma_3,\theta$ (constants) to be positive real numbers satisfying

\[
\begin{alignedat}{3}
&\alpha               &\:<\:&   \frac{n}{k+1}-(k+1),\\
&\varepsilon   &\:<\:&   n-(k+1)(k+1+\alpha),\\ 
&\eta                   &\:<\:&   \min\left(\varepsilon,\dfrac{n-\varepsilon}{k+1}\right),\\
&\gamma_2          &\:=\:&   \varepsilon-\eta,\\
&\gamma_1          &\:<\:&   \gamma_2,\\
&\gamma_3          &\:<\:&   \dfrac1{k+1}-\dfrac\varepsilon{n(k+1)},\\
&\theta              &\:<\:&    \frac{\eta}{k+1}.
\end{alignedat}
\]

We define functions $\alpha,\tau,\sigma,U:\mathbb{N}\longrightarrow \mathbb R$ by
\begin{align*}
    \delta(x)=\tau(x)&=x^\frac{n-\varepsilon}{k+1},\\
    \sigma(x)&=x^\frac{\eta}{k+1},\\
             U(x)&=x^n.
\end{align*}

Choose constants $c_1,c_2,c_3$ and use the function $A(\omega)$ as in \iref{Equation c_i}, so that:

\begin{equation} 
c_1<\frac{\gamma_3}{\gamma_2}, \quad c_2>\frac1{\gamma_1}, c_3>1,\quad A(\omega)= 2n+1 + \log\left(1+c_3^2\cdot n\cdot\max_i\{|\omega_i|^2\}\right).
\end{equation}

\subsection{Statement of the main theorem}

\begin{ThmCor}[the main theorem] \label{Theorem main theorem}
Let $k,n$ be natural numbers with $n\ge 2$ and $k<\sqrt n - 1$. We denote by $\Gamma(T)$ the set of points where $\Gamma$ intersects any $k$-dimensional algebraic variety $V$ defined over $\mathbb Q$ with $\operatorname{t}(V)\le T$. Then, for any sufficiently large positive integer $T$, the points in $\Gamma(T)$ are contained in the images (under $\gamma$) of a polynomial-in-$T$ amount of disks of exponentially small radius.

More explicitly, for sufficiently large $T$, we have that $\Gamma(T)$ is contained in the images of a

\[
    C\cdot T^{c_2(n-k-\alpha+1)\max\left(1,\frac1{c_1\cdot\alpha}\right)}
\]
amount of disks of sum of diameters less than

\[
    e^{-\frac1C\cdot T^{\theta\cdot\max(c_1,\frac1{\alpha})}}
\]

for some constant $C$ depending only on $\xi,\gamma,K$ and on the choice of the constants in \iref{Subsection choice of parameters}.
\end{ThmCor}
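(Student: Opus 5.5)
We argue by contradiction along a dyadic/polynomial range of scales, combining \iref{Proposition small polynomial}, \iref{Corollary cor:pointwise-bound} (the pointwise lower bound) and \iref{Proposition modified transcendence criterion} with the parameters of \iref{Subsection choice of parameters}. Fix $T$ large and consider the range of integers $N$ with $T^{c_1}\le N\le T^{c_2}$ (after a re-indexing $T\mapsto T^{\max(1,1/(c_1\alpha))}$, so that $\operatorname{t}(V)\le T$ implies $\operatorname{t}(V)\le N^\alpha$ for every $N$ in this range). For each such $N$ we ask whether $\Gamma(T)$ contains at least $\lceil N^{n-k-\alpha}\rceil$ distinct points. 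If, for some $N$ in the range, it does not, then $\Gamma(T)$ is finite of cardinality $\le T^{c_2(n-k-\alpha)\max(1,1/(c_1\alpha))}$ and we cover it by that many disks of any prescribed radius, and we are done. So assume that for every $N$ in the range we have such points; each lies on a prime ideal of dimension $\le k$ (take an irreducible component; if the dimension is smaller, enlarge harmlessly, or run the argument for each dimension $\le k$) with $\operatorname{t}\le N^\alpha$. Then \iref{Proposition small polynomial} yields $p_N\in\mathbb Z[x]$ with $\operatorname{t}(p_N)\le N$ and $\max_\Gamma|p_N|\le e^{-N^{n-k-\alpha}/C}$.

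Next set $Q_N:=p_N^{i_N}$ with $i_N:=\lceil C N^{k+\alpha}\rceil$, so that $\max_\Gamma|Q_N|<e^{-N^n}=e^{-U(N)}$, and $\deg Q_N,\operatorname{h}(Q_N)\lesssim N^{k+1+\alpha}\le \delta(N)=\tau(N)=N^{(n-\varepsilon)/(k+1)}$; this last inequality is exactly where $\varepsilon<n-(k+1)(k+1+\alpha)$, hence $k<\sqrt n-1$, is used. For the lower bound in \eqref{Equation assumption QN}, apply \iref{Corollary cor:pointwise-bound} to $p_N$ with $h:=e^{-N^\theta}$: outside $C N^n$ disks with total radius $\le e^{-N^\theta}$ we get $\log|p_N(\omega)|\ge -CN^{n+\theta}$, hence $\log|Q_N(\omega)|\ge -CN^{n+k+\alpha+\theta}$. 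We need this to beat $A\cdot(\delta+\tau)-\tfrac12 U(N-1)\sigma(N)\approx -\tfrac12 N^{n+\eta/(k+1)}$; since $\theta<\eta/(k+1)$ this holds provided the exponent bookkeeping $k+\alpha$ is absorbed, which I would arrange by using instead the bound with $\operatorname{t}(Q_N)$ directly: $\log|Q_N|=i_N\log|p_N|\ge -C N^{k+\alpha}N^{n+\theta}$ — here I expect to have to refine: apply the corollary to balance $n$ against $\eta$, possibly requiring the precise inequalities $\eta<\varepsilon$ and $\gamma_3$ choices. I also check assumptions (1)–(3) of \iref{Proposition modified transcendence criterion}: (1) $\sigma^{k+1}=N^\eta<N^{(n-\varepsilon)/(k+1)}$ by $\eta<(n-\varepsilon)/(k+1)$; (2) the ratio $U/((\delta+\tau)\delta^k\sigma^{k+1})\asymp N^{n-(n-\varepsilon)-\eta}=N^{\varepsilon-\eta}=N^{\gamma_2}$, sandwiched since $\gamma_1<\gamma_2$; (3) $U(a^{\gamma_3})=a^{n\gamma_3}\le a^{(n-\varepsilon)/(k+1)}$ by the choice of $\gamma_3$.

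Now let $E$ be the union over $N\in[T^{c_1},T^{c_2}]$ of the exceptional disks: at most $\sum_N CN^n\le C T^{c_2(n+1)}$ disks with total radius $\le \sum_N e^{-N^\theta}\le e^{-T^{c_1\theta}/C}$. For $\omega\in\Gamma(T)\setminus\gamma(E)$, every $Q_N$ in the range satisfies \eqref{Equation assumption QN} at $\omega$, so the criterion says $\omega$ lies on no $k$-dimensional $V$ with $\operatorname{t}(V)\le T$, contradicting $\omega\in\Gamma(T)$. Hence $\Gamma(T)\subseteq\gamma(E)$, giving the stated count and radius after undoing the re-indexing. The main obstacle is the lower-bound matching step: making the pointwise bound for the power $Q_N$ fall within the window of \eqref{Equation assumption QN} requires the exponents from \iref{Corollary cor:pointwise-bound} to be absorbed by $\sigma(N)$, and I expect the delicate part to be adjusting $i_N$ (and possibly applying the criterion with $U$ evaluated at $\operatorname{t}(Q_N)$-scale) so that both sides of \eqref{Equation assumption QN} hold simultaneously.
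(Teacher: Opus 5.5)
There is a genuine gap, and it is the step you flag yourself: the lower bound in \eqref{Equation assumption QN}. Corollary~\ref{cor:pointwise-bound} gives $\log|p_N(\omega)|\ge -CN^{n+\theta}$ off the exceptional discs. Your upper bound is $\log\max_\Gamma|p_N|=-\operatorname*{ord}_\Gamma p_N$, and all you know is $\operatorname*{ord}_\Gamma p_N\ge N^{n-k-\alpha}/C$. The ratio of the two exponents, $CN^{n+\theta}/\operatorname*{ord}_\Gamma p_N$, can therefore be of order $N^{k+\alpha+\theta}$. This ratio does not change when you pass to $Q_N=p_N^{i_N}$, whatever $i_N$ is, so neither adjusting $i_N$ nor rescaling $U$ can help. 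The window in \eqref{Equation assumption QN} only tolerates a ratio of about $\sigma(N)=N^{\eta/(k+1)}$, so you would need $k+\alpha+\theta<\eta/(k+1)$. Under the constraints of \S\ref{Subsection choice of parameters} one has $\eta<\min(\varepsilon,\frac{n-\varepsilon}{k+1})\le\frac{n}{k+2}$. This forces at least $n>k(k+1)(k+2)$, so it fails, for example, for $k=1$ and $n\in\{5,6\}$, which the theorem covers. It also fails, even for $k=0$, for every admissible choice with $\eta\le\alpha$. So the argument as written does not reach $k<\sqrt n-1$.

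The missing ingredient is the \emph{relative} lower bound, Lemma~\ref{Lemma lower bound on polynomial with disks}. Off at most $C(\operatorname*{ord}_\Gamma p+\operatorname{t}(p))$ discs of total diameter $<h$, it gives $\log|p(\gamma(x))|\ge -C\log\frac1h\,(\operatorname*{ord}_\Gamma p+\operatorname{t}(p))$. So the pointwise bound is comparable to the actual size of $p$ on $\Gamma$, not to $\operatorname{t}(p)^n$. Combine it with the \emph{minimal} $i_N$ such that $i_N\operatorname*{ord}_\Gamma p_N>N^n$; this still gives $i_N\le CN^{k+\alpha}$, hence $\operatorname{t}(Q_N)\le CN^{k+1+\alpha}$. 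Since $(i_N-1)\operatorname*{ord}_\Gamma p_N\le N^n$,
\[
\log|Q_N(\omega)|\ge -C\log\tfrac1h\bigl(i_N\operatorname*{ord}_\Gamma p_N+i_N\operatorname{t}(p_N)\bigr)\ge -C\log\tfrac1h\bigl(N^n+\operatorname*{ord}_\Gamma p_N+N^{k+1+\alpha}\bigr).
\]
Only at this point does Theorem~\ref{Theorem order-bound} enter: it bounds the single leftover term, $\operatorname*{ord}_\Gamma p_N\le C\operatorname{t}(p_N)^n\le CN^n$. Using $k+1+\alpha<n$ and taking $h=e^{-N^\theta/C}$, you get $|Q_N(\omega)|>e^{-N^{n+\theta}}$. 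This fits the window because $\theta<\eta/(k+1)$.

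The rest of your outline agrees with the paper:
\begin{itemize}
\item the re-indexing via $R=\max(T,T^{1/(c_1\alpha)})$;
\item Proposition~\ref{Proposition small polynomial};
\item your check of Assumptions (1)--(3);
\item the union of exceptional discs over $N$.
\end{itemize}
One further step needs Lemma~\ref{Lemma t of prime ideal containing ideal}: when you pass to a $\mathbb Q$-irreducible component through $\omega$, that lemma is what keeps its $\operatorname{t}$ bounded by a constant times $\operatorname{t}(V)$.
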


\begin{Rem}\label{Remark main theorem}

Assuming $T\geq 2$, since $\Gamma(T)\subseteq \Gamma(T^a)$ for any constant $a\geq 1$, then, by covering $T^a$ instead of $a$, then, for any $\rho>0$, we can can actually cover  $\Gamma(T)$ using a
\[
C\cdot\max\left(T^{c_2(n-k-\alpha+1)\max\left(1,\frac1{c_1\cdot\alpha}\right)},T^{\frac{c_2(n-k-\alpha+1)}{{\theta\cdot c_1}}\rho}\right)
\]
amount of disks of sum of diameters less than
\[
    e^{-T^\rho},
\]
for some constant $C$ depending only on $\xi,\gamma,K$ and on the choice of the constants in \iref{Subsection choice of parameters}.

\end{Rem}

\subsection{Relevant theorems from other works}

This section relies on \iref{Theorem order-bound} (we recall here the definition $\operatorname*{ord}_\Gamma p:=-\log\max_{\omega\in\Gamma}\{|p(\omega)|\}$), together with the following lemmas:
  
\begin{Lem}[See \mycite{Remark 8.2.4}{BinyaminiSalant2026Technical}]\label{Lemma lower bound on polynomial with disks}

There exists a constant $C>0$ depending only on $\Gamma$ that satisfies the following:

For any non-zero polynomial $p\in\mathbb Z[x_1,...,x_n]$ satisfying $(-\log\max_{\omega\in\Gamma}\{|p(\omega)|\})\geq 0$, given any $0<h<\frac1e$, there are a finite number of disks $\{D^i\}_{i=1}^a$ with sum of diameters less than $h$, and with $a\leq C\cdot(-\log\max_{\omega\in\Gamma}\{|p(\omega)|\}+\operatorname{t}(p))$, such that:

\begin{align*}
    \forall x\in\overline{D_\frac1e}-\bigcup_i D^i&:\\
    \log|p(\gamma(x))|
        &\geq
        -C\cdot\log\frac1h\cdot( 
    -\log\max_{\omega\in\Gamma}\{|p(\omega)|\}
        +
    \operatorname{t}(p)).
\end{align*}

\end{Lem}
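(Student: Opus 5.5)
The plan is the classical Cartan--Boutroux argument applied to the analytic function $f:=p\circ\gamma$ on $\overline{D_1}$. Write $M:=\max_{\overline{D_1}}|f|$ and $m:=\max_{\overline{D_{1/e}}}|f|=\max_{\omega\in\Gamma}|p(\omega)|$. Since $\gamma(\overline{D_1})$ is compact and $p$ has integer coefficients, we have $\log M\le C\operatorname{t}(p)$: each coefficient is at most $e^{\operatorname{h}(p)}$, there are at most $e^{O(\deg p)}$ monomials, and each monomial is bounded by $e^{O(\deg p)}$ on $\gamma(\overline{D_1})$. Together with $-\log m=\operatorname*{ord}_\Gamma p\ge0$ this gives the basic quantity
\[
  L:=\log\frac{M}{m}\le C\bigl(\operatorname*{ord}_\Gamma p+\operatorname{t}(p)\bigr).
\]
Everything will be bounded in terms of $L$.

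\textbf{Step 1 (counting zeros on an intermediate disk).} Fix radii $1/e<r_1<r_2<1$ depending only on absolute data. I will bound the number $a$ of zeros of $f$ in $\overline{D_{r_1}}$ by $C\cdot L$. This is done via Jensen's formula on a disk of radius $r_2$ around a point $z_0\in\overline{D_{1/e}}$ with $|f(z_0)|=m$; equivalently, one can use a rescaled form of \iref{Theorem ilyashenko yakovenko bound on number of zeroes of analytic function}. Either way the bound is $a\le C\log(M/m)$, with $C$ depending only on the radii. These $a$ zeros, counted with multiplicity, will index the exceptional disks, so $a\le C(\operatorname*{ord}_\Gamma p+\operatorname{t}(p))$ as required.

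\textbf{Step 2 (factorization).} Write $f=P\cdot g$, where $P(z)=\prod_{j=1}^a(z-z_j)$ runs over the zeros in $\overline{D_{r_1}}$ and $g$ has no zeros in $D_{r_1}$. To bound $g$ from above, note that on $|z|=r_2$ we have $|P|\ge (r_2-r_1)^a$, so $\log|g|\le \log M+a\log\frac1{r_2-r_1}\le \log M+CL$ there. By the maximum principle the same bound holds on $D_{r_2}$.

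To bound $g$ from below, evaluate at $z_0$: since $|P(z_0)|\le 2^a$,
\[
  \log|g(z_0)|\ge \log m-a\log 2\ge \log m-CL.
\]
The function $u:=\log M+CL-\log|g|$ is then nonnegative and harmonic on $D_{r_1}$, with $u(z_0)\le C'L$. Harnack's inequality on the compact set $\overline{D_{1/e}}\subset D_{r_1}$ gives $u\le C''L$ there, hence $\log|g|\ge -CL$ on $\overline{D_{1/e}}$.

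\textbf{Step 3 (Cartan's lemma).} Cartan's estimate says that for the monic polynomial $P$ of degree $a$ and any $0<h<1/e$, we have $|P(z)|\ge (h/(2e))^a$ outside at most $a$ disks whose radii sum to at most $h$. After adjusting constants, these become disks with sum of diameters less than $h$. Outside these disks,
\[
  \log|f|\ge -a\log\frac{2e}{h}-CL\ge -C\log\frac1h\cdot(\operatorname*{ord}_\Gamma p+\operatorname{t}(p)),
\]
using that $\log\frac1h\ge1$.

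The main obstacle is Step 1 together with the Harnack step. Both require that the point where $|f|$ is large, namely $z_0$, lie well inside the disk where zeros are counted, with room on both sides. This is why the intermediate radii $r_1,r_2$ are introduced rather than applying \iref{Theorem ilyashenko yakovenko bound on number of zeroes of analytic function} verbatim on $D_{1/e}$. The constants then depend only on these fixed radii and on $\gamma$.
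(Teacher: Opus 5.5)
Your argument is correct: zero count via the Bernstein index, division by the polynomial of zeros, Harnack for the zero-free quotient, then Cartan's lemma. The paper gives no proof of its own here; it imports the lemma from the companion paper as a consequence of standard complex-variable methods, and your route is that standard argument.

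Two small repairs are needed.

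First, in Step 1 neither stated justification works literally. For nonconstant $f$ the maximum principle puts $z_0$ on $|z|=1/e$, and a disk centred there cannot both lie in $D_1$ and contain $\overline{D_{r_1}}$ (this would need a radius $\le 1-1/e$ and $>2/e$). Also, the Ilyashenko--Yakovenko lemma cannot be rescaled to a radius ratio below $e$. Instead, apply Poisson--Jensen on $D_1$ at $z_0$, or equivalently divide $f$ by the Blaschke product of its zeros in $\overline{D_{r_1}}$. This gives $a\le \log(M/m)/\log\frac{1+r_1^2}{2r_1}$.

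Second, the Harnack step gives $\log|g|\ge \log m-CL$ rather than $\log|g|\ge -CL$, since $\log M$ may be negative. This still yields the stated bound, because $-\log m=\operatorname*{ord}_\Gamma p$ is already part of the right-hand side.
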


\begin{Lem}[see \mycite{Proposition 4.7}{NesterenkoPhilippon2001IntroductionToAlgebraicIndependenceTheory}]\label{Lemma t of prime ideal containing ideal}

Let $W$ be an equidimensional variety defined over $\mathbb Q$, not necessarily irreducible, and let $\omega\in W$. Then, $W$ contains a component $V$, irreducible over $\mathbb Q$, which contains $\omega$ and satisfies:

\[
    \operatorname{t}(V)\leq (n^2+1)\cdot\operatorname{t}(W).
\]

\end{Lem}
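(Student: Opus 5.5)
The plan is to reduce the statement to two facts about Chow forms: degree is additive over components, and the log-height of a factor of an integer polynomial is controlled through Mahler's measure.

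\textbf{Step 1: factor the Chow form.} Let $d=\dim W$. Write the decomposition of $W$ into components irreducible over $\mathbb Q$ as $W=V_1\cup\cdots\cup V_m$. All $V_j$ have dimension $d$, since $W$ is equidimensional. Pick any $j$ with $\omega\in V_j$; I will show that $V:=V_j$ works, and in fact that every component satisfies the bound. The Chow form $F_W$ lies in $\mathbb Z[\boldsymbol u^{(0)},\ldots,\boldsymbol u^{(d)}]$, with $d+1$ groups of $n+1$ variables. Up to a nonzero rational constant, it is the product $\prod_j F_{V_j}^{m_j}$ of the Chow forms of the components, with their multiplicities. By Gauss's lemma we may take each $F_{V_j}$ primitive in $\mathbb Z[\boldsymbol u]$, and then $F_W=c\prod_j F_{V_j}^{m_j}$ with $c\in\mathbb Z\setminus\{0\}$.

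\textbf{Step 2: the degree bound.} $F_{V_j}$ is multihomogeneous of degree $\deg V_j$ in each group. Additivity of degree gives $\deg V\le \deg W$ immediately.

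\textbf{Step 3: the height bound.} I will use the multivariable Mahler measure $M$. It is multiplicative, so $M(F_W)=|c|\prod_j M(F_{V_j})^{m_j}$. Every nonzero integer polynomial has $M\ge 1$, hence $M(F_V)\le M(F_W)$. Next I compare $M$ with the naive height $H$ in both directions, for a polynomial in $N=(d+1)(n+1)$ variables of degree at most $D=\deg W$ in each variable and total degree $(d+1)D$.
\begin{itemize}
\item The upper comparison is $M(P)\le \|P\|_2\le \sqrt{\#\text{monomials}}\;H(P)$, which costs at most $\tfrac12\log\binom{(d+1)D+N}{N}\le c_n D$.
\item The lower comparison is $H(P)\le\prod_{\text{vars}}\binom{\deg_{x_i}P}{\lfloor\deg_{x_i}P/2\rfloor}M(P)\le 2^{(d+1)D}M(P)$. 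Here one counts the partial degree in each group, or equivalently uses the total degree, giving $\log H\le \log M+(d+1)D\log 2$.
\end{itemize}
Combining these gives
\[
h(V)\le h(W)+O(n)\deg W .
\]
Together with Step 2 this yields $\operatorname{t}(V)\le \operatorname{t}(W)+O(n)\deg W\le (n^2+1)\operatorname{t}(W)$, once the constants are checked.

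\textbf{Main obstacle.} The only delicate point is bookkeeping. The constant has to come out as $n^2+1$, and this requires tracking the normalization of the height of a variety, that is, which norm on the Chow form is used, in the convention of \cite{NesterenkoPhilippon2001IntroductionToAlgebraicIndependenceTheory}. With $d\le n$ and $N\le (n+1)^2$, all the $\log$-binomial terms are $\le n^2\deg W$. If a different height normalization is used, an extra $O(n\log n)\deg W$ error appears, and it is absorbed in the same way.
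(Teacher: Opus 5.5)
\textbf{There is a genuine gap, in the lower comparison of Step~3.} Your route is sound: factor the Chow form of $W$ into those of its $\mathbb Q$-irreducible components, use additivity of degree, and bound the height of a factor through Mahler's measure. This is the standard way to get the estimate the paper only quotes from Nesterenko--Philippon. Steps~1--2, Gauss's lemma, $M\ge 1$ for nonzero integer polynomials and $M\le\|\cdot\|_2$ are all fine.

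The problem is that the lower comparison is false as written, and it sits exactly where the constant $n^2+1$ is decided.
\begin{itemize}
\item Mahler's bound $|a_{\mathbf k}|\le\prod_i\binom{d_i}{k_i}M(P)$ uses the partial degree $d_i$ in each \emph{individual} variable. In a Chow form each of the $(d+1)(n+1)$ variables $u^{(l)}_i$ may have degree $\deg V$. So you get $H\le 2^{(d+1)(n+1)\deg V}M$, not $2^{(d+1)\deg V}M$.
\item Switching to total degree is not equivalent, and it fails in general. For $P=(x_1+\cdots+x_N)^D$ one has $M(P)\le N^{D/2}$, while the largest coefficient is $N^D$ up to a factor polynomial in $D$. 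So $H(P)/M(P)$ beats $2^D$ once $N\ge 5$.
\end{itemize}
Hence the height loss is of order $n^2\deg W$, not $O(n)\deg W$. Since $\operatorname{h}(W)$ may be $0$, there is no slack elsewhere to absorb it. The bookkeeping you indicate ($d\le n$, $N\le(n+1)^2$) gives a loss of about $(n+1)^2\log 2\cdot\deg W$, which exceeds $n^2\deg W$ for small $n$ (e.g.\ $9\log 2>4$ when $n=2$). Even using $d\le n-1$, the crude count $n(n+1)\log 2$ still exceeds $n^2$ when $n=2$.

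\textbf{The repair stays inside your approach.}
\begin{itemize}
\item If $d=n$ then $W=\mathbb C^n$ and there is nothing to prove, so assume $d+1\le n$.
\item Set $u^{(l)}_0=1$ in every group. For a multihomogeneous form this changes neither the coefficients nor the Mahler measure. Only $(d+1)n\le n^2$ variables remain, each of partial degree at most $\deg V\le D:=\deg W$.
\item Use $\binom{m}{\lfloor m/2\rfloor}\le 2^{m-1}$ for $m\ge 1$, and $\|F_W\|_2\le\binom{D+n}{n}^{(d+1)/2}H(F_W)$ for the dehomogenized form.
\end{itemize}
This gives
\[
  \operatorname{h}(V)\le \operatorname{h}(W)+n^2(D-1)\log 2+\tfrac n2\log\binom{D+n}{n}\le \operatorname{h}(W)+n^2D .
\]
The last inequality holds at $D=1$, where it reads $\log(n+1)\le 2n$. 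It then persists as $D$ grows: the left increment $\frac n2\log\frac{D+1+n}{D+1}\le\frac n2\log(1+\frac n2)$ never exceeds the right increment $n^2(1-\log 2)$. Therefore $\operatorname{t}(V)\le(n^2+1)\deg W+\operatorname{h}(W)\le(n^2+1)\operatorname{t}(W)$ for every component, in particular for one through $\omega$.
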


\subsection{Proving the main theorem}

\subsubsection{Framework of the proof}

Notice, a direct computation yields that the functions $\delta,\tau,\sigma,U$ satisfy Assumptions~\eqref{Assumption tau + ndelta > sigma},~\eqref{Assumption growth of U},~\eqref{Assumption U is polynomial} in \iref{Proposition modified transcendence criterion} for large enough $a$.

Let $T$ be a positive integer. We define $R:=\max\left(T,T^{\frac1{c_1\cdot\alpha}}\right)$. If the size of $\Gamma(T)$ less than $T^{c_2(n-k-\alpha)\max\left(1,\frac1{c_1\cdot\alpha}\right)}$, then, $\Gamma(T)$ can be covered by $T^{c_2(n-k-\alpha)\max\left(1,\frac1{c_1\cdot\alpha}\right)}$ balls of any size, and we are done. So, we assume otherwise and work with $R$.

Let $N \in \mathbb{N} \cap[R^{c_1}, R^{c_2}]$. We see by \iref{Lemma t of prime ideal containing ideal} that $\Gamma(T)$ gives us a collection of at least $N^{n-k-\alpha}$ distinct points $\omega_i$, each of which belongs to the zero locus of a prime ideal $\mathfrak{p}_i\subseteq\mathbb Q[x_1,...,x_n]$ of dimension $k$ and with $\operatorname{t}(\mathfrak{p}_i) \le (n^2+1)\cdot N^\alpha$.

In the next parts, we will use these points to prove that if $T$ is large enough, then $\Gamma(R)$ can be covered by the images under $\gamma$ of a $C\cdot R^{c_2\cdot\left(n-k-\alpha+1\right)}$ amount of disks with sum of diameters less than $e^{-\frac1C\cdot R^{c_1\theta}}$, for some constant $C$ depending only on $\xi,\gamma,K$ and on the choice of the constants in \iref{Subsection choice of parameters}. Since $R=\max\left(T,T^{\frac1{c_1\cdot\alpha}}\right)$, the theorem follows.

\subsubsection{Construction of auxiliary polynomials}

\begin{Prop} \label{final prop}

Assuming that $k\leq n-2$ and that $T$ is sufficiently large, then, for each integer $N \in [R^{c_1}, R^{c_2}]$, there exists a non-zero polynomial 
\[
    Q_N \in\mathbb Z[x_1, \ldots, x_n]
\]
satisfying the following properties:
\begin{enumerate}

\item $\mathrm{t}( Q_N) \le C\cdot N^{k+1+\alpha}.$\label{Property first property}
\item $
    e^{-N^{n+\theta}} 
    < 
    | Q_N(\omega)| 
    < 
    e^{-N^n}
    \quad \text{for all } \omega \in \gamma\left( \overline{D_{\frac1e}} - A_N \right).$\label{Property second property}
\end{enumerate}
where $A_N $ is a union of at most $CN^{n-k-\alpha}$ disks with sum of diameters less than $e^{-\frac1C N^\theta}$ for some constant $C$ depending only on $\xi,\gamma,K$ and on the choice of the constants in \iref{Subsection choice of parameters}.

\end{Prop}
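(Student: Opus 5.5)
We will build $Q_N$ as a power of the auxiliary polynomial from Proposition~\ref{Proposition small polynomial}, and then get the lower bound off a small exceptional set from Lemma~\ref{Lemma lower bound on polynomial with disks}.

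\emph{Step 1 (auxiliary polynomial).} Fix $N\in[R^{c_1},R^{c_2}]$. By the framework paragraph there are at least $N^{n-k-\alpha}$ distinct points $\omega_i\in\Gamma$. Each lies on a $\mathbb Q$-prime ideal $\mathfrak p_i$ of dimension $k$ with $\operatorname{t}(\mathfrak p_i)\le (n^2+1)N^\alpha$. Proposition~\ref{Proposition small polynomial}, applied with $c=n^2+1$, gives a nonzero $p_N\in\mathbb Z[x]$ with $\operatorname{t}(p_N)\le N$ and
\[
\operatorname*{ord}_\Gamma p_N\ge \tfrac1C N^{n-k-\alpha}.
\]
Theorem~\ref{Theorem order-bound} gives the complementary bound $\operatorname*{ord}_\Gamma p_N\le C\,\operatorname{t}(p_N)^n\le CN^n$. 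Set $o_N:=\operatorname*{ord}_\Gamma p_N$, so that
\[
\tfrac1C N^{n-k-\alpha}\le o_N\le CN^n .
\]

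\emph{Step 2 (the power).} Put $Q_N:=p_N^{i_N}$ with
\[
i_N:=\lceil (N^n+1)/o_N\rceil\le C' N^{k+\alpha}.
\]
This gives $\operatorname{t}(Q_N)\le i_N\operatorname{t}(p_N)+O(i_N\log(n+1)\deg p_N)\le CN^{k+1+\alpha}$; the height of a power is controlled by subadditivity of $\operatorname{h}$ up to $O(\deg)$. That is Property~\eqref{Property first property}. It also gives the upper half of Property~\eqref{Property second property}: on all of $\Gamma$,
\[
|Q_N|=|p_N|^{i_N}\le e^{-i_N o_N}\le e^{-N^n-1}<e^{-N^n}.
\]

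\emph{Step 3 (lower bound off disks).} Apply Lemma~\ref{Lemma lower bound on polynomial with disks} to $p_N$, which is legitimate since $o_N\ge0$, with $h:=e^{-\frac1C N^\theta}$. This produces at most
\[
a\le C(o_N+\operatorname{t}(p_N))\le C(CN^n+N)
\]
disks with sum of diameters below $h$. Outside them,
\[
\log|p_N(\gamma(x))|\ge -C\cdot\tfrac1C N^\theta\,(o_N+N),
\]
and therefore
\[
\log|Q_N|\ge -i_N N^\theta(o_N+N)\ge -C''N^{n+\theta},
\]
using $i_No_N\le N^n+1+o_N\le C N^n$ and $i_N N\le CN^{k+1+\alpha}\le CN^n$. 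The last inequality holds because $k+1+\alpha<n$ by the choice of parameters.

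\emph{Main obstacle: the constants.} Property~\eqref{Property second property} asks for exactly $e^{-N^{n+\theta}}$, with no constant in front. I would absorb $C''$ in one of two ways. One option is to run Step 3 with a slightly smaller $\theta'<\theta$ and then use $C''N^{n+\theta'}<N^{n+\theta}$ for large $N$; the disk diameters then become $e^{-N^{\theta'}/C}$, which requires adjusting the statement's $\theta$. The cleaner option is to take $h:=e^{-N^{\theta}/(C_0C)}$ with $C_0$ large, so that the factor $\log\frac1h$ carries $1/C_0$ and cancels $C''$.

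A second issue is the disk count. The statement claims at most $CN^{n-k-\alpha}$ disks, whereas Lemma~\ref{Lemma lower bound on polynomial with disks} only yields about $CN^n$ from the crude bound $o_N\le CN^n$. I expect this is resolved either by reading the count as $\operatorname{poly}(N)$, which is all the covering argument downstream uses, or by refining the construction. The refinement would be to take $p_N$ with $o_N$ comparable to $N^{n-k-\alpha}$. Proposition~\ref{Proposition small polynomial} only gives a lower bound on $o_N$, though, so I would accept the count $C(o_N+N)$ and check that the final count in Theorem~\ref{Theorem main theorem} still holds. Apart from these bookkeeping points the argument is a direct combination of the cited results.
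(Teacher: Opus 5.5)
Your argument is the paper's argument. It takes $Q_N=p_N^{i_N}$ with $i_N$ essentially the least power making $\max_\Gamma|Q_N|<e^{-N^n}$, uses Theorem~\ref{Theorem order-bound} to get $\operatorname*{ord}_\Gamma p_N\le CN^n$ (hence $i_N\operatorname*{ord}_\Gamma p_N\le CN^n$), and uses Lemma~\ref{Lemma lower bound on polynomial with disks} with $h=e^{-N^\theta/C}$, $C$ large, so that $\log\frac1h$ absorbs the constants.

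The disk-count discrepancy you flag is real, and the paper does not resolve it either. The paper reads $\mathrm{const}\cdot N^{n-k-\alpha}$ disks directly off the lemma, which would need an upper bound $\operatorname*{ord}_\Gamma p_N\lesssim N^{n-k-\alpha}$ that is never established; only the lower bound is. What both arguments actually give is $C(\operatorname*{ord}_\Gamma p_N+\operatorname{t}(p_N))\le CN^n$ disks. That still yields a polynomial total in the proof of Theorem~\ref{Theorem main theorem}, namely $R^{c_2(n+1)}$ instead of $R^{c_2(n-k-\alpha+1)}$.
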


\begin{proof}
By \iref{Proposition small polynomial}, there exists a non-zero polynomial $p_N\in\mathbb Z[x_1,...,x_n]$ satisfying the following bounds:

\begin{equation}\label{Equation bound_tpN}
    \mathrm{t}(p_N)
        \le
    N,
\end{equation}

\begin{equation*}
    \max_{\omega \in \Gamma}\{|p_N(\omega)|\}
        \le 
    e^{-\frac1{\mathrm{const}} \cdot N^{n-k-\alpha}}. \nonumber
\end{equation*}

By \iref{Lemma lower bound on polynomial with disks} applied to $p_N$ and $h$ to be determined, we find a union $A_N$ of at most $\mathrm{const} \cdot N^{n-k-\alpha}$ disks with sum of diameters at most $h$ such that for all $\omega \in \gamma\left( \overline{D_{\frac1e}} - A_N \right)$ we have:
\begin{equation} \label{Equation lower bound pN}
    |p_N(\omega)|
        >
    e^{\mathrm{const}\cdot\log\frac1h\cdot\log\max_{\omega\in\Gamma}\{|p_N(\omega)|\}-\mathrm{const}\cdot\log\frac1h\cdot\operatorname{t}(p_N)}.
\end{equation}

\medskip

Let $i_N \in \mathbb{N}$ be the smallest positive integer such that
\begin{equation}\label{Equation upper bound pN^iN}
    \max_{\omega \in \Gamma}\{|p_N^{i_N}(\omega)|\}< e^{-N^n},
\end{equation}
or equivalently,
\[
i_N\cdot\log\max_{\omega \in \Gamma}\{|p_N(\omega)|\}< -N^n.
\]
Note that
\begin{equation} \label{Equation bound of iN}
     i_N \le {\mathrm{const}} \cdot N^{k+\alpha}.
\end{equation}
and that
\begin{equation}\label{Equation iN-1}
(i_N-1)\cdot\log\max_{\omega \in \Gamma}\{|p_N(\omega)|\}
    \ge
-N^n.
\end{equation}
Set
\[
     Q_N := p_N^{i_N}.
\]

From \iref{Equation bound_tpN} and \iref{Equation bound of iN} we deduce
\begin{equation}
    {\mathrm t}(Q_N) 
        \le
    {\mathrm{const}} \cdot N^{k+1+\alpha},
    \label{Equation bound_tQN} 
\end{equation}

\medskip

which is \iref{Property first property}.

We now verify that $Q_N$ satisfies \iref{Property second property}.

We established the upper bound in \iref{Equation upper bound pN^iN}. As for the lower bound, we have that from the above Inequalities \eqref{Equation lower bound pN}, \eqref{Equation iN-1}, \eqref{Equation bound_tpN}, and \eqref{Equation bound of iN}, that for all $\omega \in \gamma\left( \overline{D_{\frac1e}} - A_N \right)$:
\begin{align*}
|Q_N(\omega)|
    &=
|p_N^{i_N}(\omega)|
    \\&>
e^{\mathrm{const}\cdot\log\frac1h\cdot\log\max_{\omega\in\Gamma}\{|p_N(\omega)|\}\cdot(i_N-1)+\mathrm{const}\cdot\log\frac1h\cdot\log\max_{\omega\in\Gamma}\{|p_N(\omega)|\}-\mathrm{const}\cdot\log\frac1h\cdot\operatorname{t}(p_N)\cdot i_N}
    \\&\ge
e^{-\mathrm{const}\cdot\log\frac1h\cdot N^n+\mathrm{const}\cdot\log\frac1h\cdot\log\max_{\omega\in\Gamma}\{|p_N(\omega)|\}-\mathrm{const}\cdot\log\frac1h\cdot N\cdot N^{k+\alpha}},
\end{align*}
and combining this with \iref{Theorem order-bound} applied to $p_N$ and \iref{Equation bound_tpN}, we get that:
\[
    |Q_N(\omega)|
        \ge
    e^{-\mathrm{const}\cdot\log\frac1h\cdot(2N^n+N^{k+1+\alpha})},
\]
and since $n\ge k+1+\alpha$, then we finally have that for all $\omega \in \gamma\left( \overline{D_{\frac1e}} - A_N \right)$:
\[
    |Q_N(\omega)|
        >
    e^{-\mathrm{const}\cdot\log\frac1h\cdot N^n},
\]
and by a choice of $h:=e^{-\frac1C N^\theta}$ for some large enough constant $C$, then we have that
\[
    |Q_N(\omega)|
        >
    e^{-N^{n+\theta}},
\]
which completes the proof.

\end{proof}

\subsubsection{Completing the proof}
\begin{proof}[\textbf{Proof of \iref{Theorem main theorem}}]

Let

\[
    \bigl( Q_N \bigr)_{\,{R^{c_1} \le N \le R^{c_2}}},
    (A_N)_{\,R^{c_1} \le N \le R^{c_2}}
\]

be the families of polynomials and disks obtained in Proposition~\ref{final prop}, and let
\[
    \omega = (\omega_1,\ldots,\omega_n) 
    \in \gamma\!\left( 
        \overline{D_{\frac1e}} - 
        \bigcup_{N \in \mathbb{N} \cap [R^{c_1},\,R^{c_2}]} A_N
    \right).
\]

We remark that for $R$ large enough, we get that $\bigcup_{N \in \mathbb{N} \cap [R^{c_1},\,R^{c_2}]} A_N$ is a union of at most a $\mathrm{const}\cdot R^{c_2\cdot\left(n-k-\alpha+1\right)}$ amount of disks with sum of diameters less than $e^{-\frac1{\mathrm{const}}\cdot R^{c_1\theta}}$.

By our assumption that $k<\sqrt n-1$ and by our choice of $\varepsilon$, we have that for $N$ (or equivalently $T$) sufficiently large:
\[
    \operatorname{t}(Q_N)
        \leq
    N^{\frac{ n- \varepsilon }{k+1}}
        =
    \delta(N)
        =
    \tau(N).
\]

We show that these polynomials $Q_N$ satisfy \iref{Equation assumption QN} in \iref{Proposition modified transcendence criterion} for $\omega$, with the functions $(\delta,\tau,\sigma,U)$ and with $A(\omega)$ as before. 

Since

\[
    A(\omega)\cdot\bigl( \delta(N) + \tau(N) \bigr) 
    - \frac12\, U(N-1)\,\sigma(N)
        \le
    2\max_{\omega\in\Gamma}A(\omega)\cdot N^{\frac{n}{k+1} - \frac{\varepsilon}{k+1}}
       -\frac12\cdot (N-1)^{n + \frac{\eta}{k+1}},
\]

then, in order to prove \iref{Equation assumption QN} for $N$ sufficiently large, 
it suffices to show that
\[
    \max\!\left\{
        \frac{n}{k+1} - \frac{\varepsilon}{k+1},\,
        n + \theta
    \right\}
    < 
    n + \frac{\eta}{k+1}.
\]
This follows immediately from the definition of $\theta$.

Thus, by \iref{Proposition modified transcendence criterion}, $\omega$ does not belong to an algebraic variety of dimension $\le k$ and of degree and log-height $\le R$.
And so, we have that all the points in $\Gamma(R)$ must belong to $\gamma\left(\bigcup_{N\in\mathbb N\cap[R^{c_1},R^{c_2}]}A_N\right)$, which completes the proof of \iref{Theorem main theorem}.
\end{proof}

\section{Appendix: Calculations Relating to \texorpdfstring{Definitions \iref{Subsection definitions philippon's criterion}}{Philippon's Criterion Definitions}}

\begin{Lem}\label{Lemma 1}
Let $\delta, \tau \geq 2$ be real numbers, and let $Q \in \mathbb{Z}[x_1, \ldots, x_n]$ be a non-zero polynomial of degree at most $\delta$, with log-height at most $\tau$.
Define the homogeneous polynomial
\[
Q^{\mathrm{proj}} := X_0^{\deg Q} \cdot Q\left(\frac{X_1}{X_0}, \ldots, \frac{X_n}{X_0}\right).
\]
Then the following estimates hold:

\begin{enumerate}
\item $\operatorname{h}_1(Q^{\mathrm{proj}}) \le \tau + n\log\delta$.\label{Part 1 of lemma 1}

\item Let $x \in \mathbb{C}^n$, and write $\boldsymbol{x} = (1 : x) \in \mathbb{P}^n(\mathbb{C})$. Then,\label{Part 2 of lemma 1}

\[
\frac{\operatorname{norm}(Q^{\mathrm{proj}}, \boldsymbol{x})}{\operatorname{norm}(Q^{\mathrm{proj}})} 
\le 
|Q(x)| \cdot e^{\frac{\log(n+1)}2\delta}.
\]
\end{enumerate}

\end{Lem}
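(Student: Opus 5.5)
The plan is to compare both quantities directly with the coefficients of $Q$. Write $d:=\deg Q\le\delta$. Then $Q^{\mathrm{proj}}$ is homogeneous of degree $d$ in $X_0,\ldots,X_n$, and its coefficients are exactly the coefficients of $Q$, re-indexed by $\alpha=(d-|\beta|,\beta)$. Two elementary facts will be used:
\begin{itemize}
\item the multinomial coefficients satisfy $1\le\binom{d}{\alpha}\le\sum_{|\alpha'|=d}\binom{d}{\alpha'}=(n+1)^d$;
\item the number of monomials of degree $d$ in $n+1$ variables is $\binom{d+n}{n}$.
\end{itemize}

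\textbf{Part (1).} Since every $\binom{d}{\alpha}\ge 1$, we get
\[
\operatorname{norm}(Q^{\mathrm{proj}})^2\le\sum_\alpha|Q^{\mathrm{proj}}_\alpha|^2\le \binom{d+n}{n}e^{2\tau}.
\]
It then suffices to check that $\binom{d+n}{n}\le\delta^{2n}$. I would use $\binom{d+n}{n}=\prod_{j=1}^n\frac{d+j}{j}\le(d+1)^n$, valid since $\frac{d+j}{j}\le d+1$ for $j\ge1$. Then $(d+1)\le\delta+1\le\delta^2$, which holds because $\delta\ge2$. Taking square roots and logarithms gives $\operatorname{h}_1(Q^{\mathrm{proj}})\le\tau+n\log\delta$.

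\textbf{Part (2).} First bound the numerator. Since $Q^{\mathrm{proj}}(1,x)=Q(x)$ and $\sum|x_i|^2\ge1$ when $x_0=1$, we have
\[
\operatorname{norm}(Q^{\mathrm{proj}},\boldsymbol{x})=\frac{|Q(x)|}{(1+\|x\|_2^2)^{d/2}}\le|Q(x)|.
\]
Next bound the denominator from below. Because $Q$ is a non-zero polynomial with integer coefficients, some coefficient satisfies $|Q^{\mathrm{proj}}_{\alpha}|\ge1$. Combined with $\binom{d}{\alpha}\le(n+1)^d$, this gives
\[
\operatorname{norm}(Q^{\mathrm{proj}})\ge(n+1)^{-d/2}\ge e^{-\frac{\log(n+1)}{2}\delta}.
\]
Dividing the numerator bound by this lower bound yields the claimed inequality.

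There is no real obstacle. The only points needing care are the counting inequality $\binom{d+n}{n}\le\delta^{2n}$, which is where $\delta\ge2$ is used, and the integrality of the coefficients, which is what makes the lower bound on $\operatorname{norm}(Q^{\mathrm{proj}})$ possible.
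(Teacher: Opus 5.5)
Your proof is correct and follows essentially the same route as the paper: in Part (1) you bound $\operatorname{norm}(Q^{\mathrm{proj}})^2$ by $e^{2\tau}$ times the number of monomials, which is at most $(d+1)^n$, and in Part (2) you bound the numerator by $|Q(x)|$ and the denominator from below via $\binom{d}{\alpha}\le(n+1)^d$. You are slightly more explicit than the paper, which leaves unstated both the integer coefficient of modulus at least $1$ needed for the lower bound and the inequality $d+1\le\delta+1\le\delta^2$ that uses $\delta\ge2$.
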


\begin{proof}
\hfill\\
\begin{enumerate}
\item The definition of $\operatorname{h}_1(Q^{\mathrm{proj}})$ implies
\[
\operatorname{h}_1(Q^{\mathrm{proj}}) 
\le \operatorname{h}(Q) + \frac12 \,\log\left((\deg Q^{\mathrm{proj}} + 1)^n\right)
\le \tau + n \,\log\,\delta.
\]
\item The definitions of the norms ensure
\[
\frac{\operatorname{norm}(Q^{\mathrm{proj}}, \boldsymbol{x})}{\operatorname{norm}(Q^{\mathrm{proj}})}
\le \frac{|Q(x)|}{\sqrt{\sum_{\alpha} \frac{|Q^{\mathrm{proj}}_\alpha|^2}{\binom{\deg Q^{\mathrm{proj}}}{\alpha}}}}.
\]
Since the binomial coefficients satisfy $\binom{\deg Q^{\mathrm{proj}}}{\alpha} \le (n+1)^{\deg Q^{\mathrm{proj}}}$ for multi-indices $\alpha$, the denominator is at least $e^{-\frac{\log(n+1)}2\delta}$. Thus,
\[
\frac{\operatorname{norm}(Q^{\mathrm{proj}}, \boldsymbol{x})}{\operatorname{norm}(Q^{\mathrm{proj}})}
\le |Q(x)| \cdot e^{\frac{\log(n+1)}2 \delta}.
\]
\end{enumerate}
\end{proof}

\begin{Lem}\label{Lemma Dist implies l_2 distance}
Let $x=(x_1,\ldots,x_n) \in \mathbb{C}^n$, and write $\boldsymbol{x} := (1 : x) \in \mathbb{P}^n(\mathbb{C})$. Let $d>1$ be some real number.

Then, for every $r \in \left(0, \frac{1-\frac1d}{\sqrt{1+\sum_{i=1}^n|x_i|^2}\cdot \sqrt{n+1}}\right]$, we have
\[
    \overline{B_{\boldsymbol{x}, \operatorname{Dist}}(r)}
    \subseteq
    \{\boldsymbol{y}=(1:y)\in \mathbb{P}^n(\mathbb{C}) \mid y\in \mathbb{C}^n, \|x-y\|_2\le c_dr\}.
\]
where
\begin{equation}\label{Equation c}
c_d:=\sqrt{1+\sum_{i=1}^n|x_i|^2}\cdot \sqrt{1+d^2\cdot n\cdot\max_i\{|x_i|^2\}}.
\end{equation}
\end{Lem}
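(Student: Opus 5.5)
The plan is to normalize a representative of $\boldsymbol{y}$, use the Plücker-type numerator of $\operatorname{Dist}$ to control the affine coordinates, and reduce everything to a lower bound on the $0$-th coordinate of $\boldsymbol{y}$. Write $X:=\sqrt{1+\sum_{i}|x_i|^2}$, $m:=\max_i|x_i|$, and $\hat x:=(1,x_1,\ldots,x_n)$, so $\|\hat x\|_2=X$. Let $\boldsymbol{y}\in\overline{B_{\boldsymbol{x},\operatorname{Dist}}(r)}$ and choose a representative $y=(y_0,\ldots,y_n)$ with $\|y\|_2=1$. By definition,
\[
\sum_{0\le i<j\le n}|x_iy_j-x_jy_i|^2=\operatorname{Dist}(\boldsymbol{x},\boldsymbol{y})^2\,X^2\le r^2X^2 .
\]
Keeping only the terms with $i=0$ (where $x_0=1$) gives the key estimate
\[
\Bigl(\sum_{j=1}^n|y_j-x_jy_0|^2\Bigr)^{1/2}\le rX .
\]

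The first step is to show that $y_0\neq0$ with a quantitative lower bound. By the triangle inequality, the key estimate gives $\bigl(\sum_{j\ge1}|y_j|^2\bigr)^{1/2}\le |y_0|\,\|x\|_2+rX$. Since $\|y\|_2=1$, this yields
\[
1\le |y_0|^2+\bigl(|y_0|\,\|x\|_2+rX\bigr)^2 .
\]
The hypothesis $r\le \frac{1-1/d}{X\sqrt{n+1}}$ gives $rX\le (1-\tfrac1d)/\sqrt{n+1}<1$. In particular $y_0=0$ is impossible, so $\boldsymbol{y}=(1:y')$ with $y'_j:=y_j/y_0$.

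Dividing the key estimate by $|y_0|$ gives
\[
\|y'-x\|_2\le \frac{rX}{|y_0|}.
\]
Hence the claim $\|x-y'\|_2\le c_d r$ reduces exactly to
\[
\frac1{|y_0|}\le \sqrt{1+d^2\,n\,m^2}. \tag{$\ast$}
\]

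The main obstacle is verifying $(\ast)$ from the quadratic inequality above together with the smallness of $rX$. I would argue by contradiction. Suppose $|y_0|<s:=(1+d^2nm^2)^{-1/2}$. Using $\|x\|_2\le\sqrt n\,m$, the right-hand side of the quadratic inequality is then at most $s^2+(s\sqrt n\,m+rX)^2$. It should suffice to show $s\sqrt n\,m+rX\le \sqrt{1-s^2}=d\sqrt n\,m\,s$, i.e.\ that $rX\le (d-1)\sqrt n\,m\,s$, which would give the contradiction. This works when $m$ is not small, but it degenerates as $m\to0$.

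So the step must really be carried out with the exact quadratic in $|y_0|$ and the precise constant $(1-1/d)/\sqrt{n+1}$, rather than through the crude triangle inequality. My first attempt would be to keep the full sum including the $i,j\ge1$ Plücker terms, which also vanish to first order. Alternatively, I would decompose $y=\lambda\hat x/X+w$ with $w\perp\hat x$ and $\|w\|_2=\operatorname{Dist}\le r$, giving $|y_0|\ge |\lambda|/X-r\ge \sqrt{1-r^2}/X-r$. I would then compare this lower bound with $s$ using $X^2\le 1+nm^2$ and the factor $1/d$ of slack built into the admissible range of $r$. I expect this elementary but delicate inequality chase to be the only genuine difficulty. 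The remaining steps are the direct consequences of the definition of $\operatorname{Dist}$ described above.
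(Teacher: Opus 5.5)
Your reduction up to $(\ast)$ is correct, but $(\ast)$ is never proved. The finer inequality chase you propose for small $m$ cannot succeed, because for small $x$ both $(\ast)$ and the lemma as literally stated are false.

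Take $x=0$. Then $X=1$ and $c_d=1$, while for $\boldsymbol y=(1:y)$ one computes $\operatorname{Dist}(\boldsymbol x,\boldsymbol y)=\|y\|_2/\sqrt{1+\|y\|_2^2}$. So every $y$ with $\|y\|_2=r/\sqrt{1-r^2}>r$ lies in $\overline{B_{\boldsymbol{x},\operatorname{Dist}}(r)}$ but violates $\|x-y\|_2\le c_d r$, for every admissible $r$. Concretely: $n=1$, $d=2$, $r=0.3$, $y=0.31$. By continuity the same happens for all sufficiently small $x$. So the degeneration you observed as $m\to0$ is real, not an artifact of the triangle inequality.

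The paper's proof has the same defect, hidden in its normalization $\max_i|b_i|=1$. Its last step uses $|x_{i_{\max}}|\le\max_i|x_i|$. This fails when the maximum is attained at $i_{\max}=0$ (so $x_{i_{\max}}=x_0=1$) and $\max_{i\ge1}|x_i|<1$; in that case one only gets $|y_i|\le1$. What that argument actually proves is the lemma with $m$ replaced in $c_d$ by $M:=\max(1,m)$, i.e.\ with the maximum taken over $0\le i\le n$. Downstream this only changes the constants $c_d'$ in Lemma~\ref{Lemma 2} and $A(\omega)$, which is harmless for the main theorem.

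With that correction, your own argument closes at once, with no finer analysis. Set $s:=(1+d^2nM^2)^{-1/2}$ and use $\|x\|_2\le\sqrt n\,M$. You isolated the condition $rX\le(d-1)\sqrt n\,M s$. Since $rX\le\frac{d-1}{d\sqrt{n+1}}$, this condition holds as soon as $1+d^2nM^2\le d^2n(n+1)M^2$, i.e.\ $dnM\ge1$, which is automatic because $M\ge1$. The same computation shows the literal statement is true whenever $m\ge 1/(dn)$.

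So your route is a valid alternative proof of the corrected lemma, and it differs from the paper's. You normalize $\boldsymbol y$ in $\ell^2$ and extract the lower bound $|y_0|\ge s$ from one quadratic inequality involving all coordinates at once. The paper normalizes in $\ell^\infty$ and reads off $|b_0|\ge 1/(d|x_{i_{\max}}|)$ from the single dominant coordinate, giving $\max_i|y_i|\le dM$. It then re-inserts the representative $(1:y)$ into $\operatorname{Dist}$. Both routes yield the same constant $X\sqrt{1+d^2nM^2}$.
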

\begin{proof}

We denote $d_1:=\sqrt{1+\sum_{i=1}^n|x_i|^2}\cdot \sqrt{n+1}$.

Let $\boldsymbol{y} \in \overline{B_{\boldsymbol{x}, \operatorname{Dist}}(r)}$. Write
\[
    \boldsymbol{y} =: (b_0 : b_1 : \cdots : b_n).
\]
Without loss of generality, we may assume that $\max_i |b_i| = 1$ (by rescaling). Let $i_{\max}$ be such that $|b_{i_{\max}}| = 1$. 
The definition of projective distance together with the choice of $\boldsymbol{y}$ implies for each $i$
\[
    \frac{|b_i - b_0x_i |}{d_1}\le \operatorname{Dist}(\boldsymbol{x}, \boldsymbol{y}) \le r.
\]
In particular, for $i = i_{\max}$, this implies
\begin{equation} \label{Equation bound of 1-xb}
    |1 - b_0  x_{i_{\max}}| \le d_1r,
\end{equation}
and $b_0\neq 0$ since $d_1r<1$.
Therefore, we may normalize $\boldsymbol{y}$ as
\[
    \boldsymbol{y} = (1 : y_1 : \cdots : y_n),
\]
with all $y_i := \frac{b_i}{b_0}$. 

The choice of $r$ with \iref{Equation bound of 1-xb} implies
\begin{align*}
|b_0|
    \geq
\dfrac{1-d_1r}{|x_{i_{\max}}|}
    \geq
\dfrac1{d|x_{i_{\max}}|},
\end{align*}
and
\begin{equation} \label{Equation upper bound yi} 
\max_{1\le i \le n}\{|y_i|\}\leq d|x_{i_{\max}}|.
\end{equation}
Using again the projective distance estimate together with \iref{Equation upper bound yi}, we have
\[
r
    \geq
\dfrac{\sqrt{\sum_{1\le i \le n}|x_i - y_i|^2}}{\sqrt{1+\sum_i|x_i|^2}\sqrt{1+\sum_{i}|y_i|^2}}
    \geq
 \dfrac{\sqrt{\sum_{1\le i \le n}|x_i - y_i|^2}}{c_d}.
\]
Therefore,
\[
    \| (x_1, \ldots, x_n) - (y_1, \ldots, y_n) \|_2\le c_dr,
\]
and $\boldsymbol{y}$ lies in the projectivization of the $l_2$-ball of radius $c_dr$ centered at $x$.
\end{proof}

\begin{Lem}\label{Lemma 2}
Let $\omega = (\omega_1, \ldots, \omega_n) \in \mathbb{C}^n$, let $d>1$ be some real number, and let $r>0$ be a real number satisfying:

\[
    r\leq\min\left(\frac{1-\frac1d}{\sqrt{1+\sum_{i=1}^n|\omega_i|^2}\cdot \sqrt{n+1}},\frac1{\sqrt{1+\sum_{i=1}^n|\omega_i|^2}\cdot \sqrt{1+d^2\cdot n\cdot\max_i\{|\omega_i|^2\}}}\right).
\]

Let $\delta, \tau\geq1$ be real numbers, and let
\[
Q \in \mathbb{Z}[x_1, \ldots, x_n]
\]
be a non-zero polynomial of degree at most $\delta$ and log-height at most $\tau$.
Set
\begin{align*}
\boldsymbol{\omega} &:= (1 : \omega_1 : \cdots : \omega_n) \in \mathbb{P}^n(\mathbb{C}), \\
Q^{\mathrm{proj}} &:= X_0^{\deg Q} \cdot Q\left(\frac{X_1}{X_0}, \ldots, \frac{X_n}{X_0} \right).
\end{align*}
Define
\[
c_d' = 2n+1 + \log\left(1+d^2\cdot n\cdot\max_i\{|\omega_i|^2\}\right).
\]
Assume that
\[
e^{c_d'(\delta + \tau)} r<|Q(\omega)|.
\]
Then $Q^{\mathrm{proj}}$ does not vanish on the closed projective ball $\overline{B_{\boldsymbol{\omega}, \operatorname{Dist}}(r)}$.
\end{Lem}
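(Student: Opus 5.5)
We argue by contrapositive. Suppose $Q^{\mathrm{proj}}$ vanishes at some $\boldsymbol{y}\in\overline{B_{\boldsymbol{\omega},\operatorname{Dist}}(r)}$; we will show $|Q(\omega)|\le e^{c_d'(\delta+\tau)}r$, contradicting the hypothesis. The first step is to localize $\boldsymbol{y}$ in affine coordinates. The first constraint on $r$ is exactly the hypothesis of \iref{Lemma Dist implies l_2 distance} (with $x=\omega$), so $\boldsymbol{y}=(1:y)$ with $y\in\mathbb C^n$ and $\|\omega-y\|_2\le c_d r$. Here $c_d$ is as in \eqref{Equation c}. Since $\boldsymbol{y}$ has first coordinate $1$, the vanishing of $Q^{\mathrm{proj}}$ at $\boldsymbol{y}$ means $Q(y)=0$. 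The second constraint on $r$ gives $c_d r\le 1$, so $\|y-\omega\|_2\le 1$. The proof of \iref{Lemma Dist implies l_2 distance} also gives $\max_i|y_i|\le d\max_i|\omega_i|$.

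Next we apply a mean value estimate to the segment from $y$ to $\omega$:
\[
|Q(\omega)|=|Q(\omega)-Q(y)|\le \|\omega-y\|_2\cdot \sqrt n\cdot\max_{z\in[y,\omega]}\max_j\left|\partial_jQ(z)\right|.
\]
We bound the partial derivatives crudely. $Q$ has at most $(\delta+1)^n$ monomials, each coefficient is at most $e^\tau$, and differentiating contributes a factor at most $\delta$. On the segment every coordinate satisfies $|z_i|\le \max(|\omega_i|,|y_i|)\le \max(1,d\max_i|\omega_i|)$, so each monomial of degree at most $\delta$ is bounded by $M^{\delta}$, where $M:=\sqrt{1+d^2 n\max_i|\omega_i|^2}$. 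This gives
\[
|\partial_jQ(z)|\le \delta(\delta+1)^n e^{\tau}M^{\delta}.
\]

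Finally we combine the bounds:
\[
|Q(\omega)|\le c_d r\cdot\sqrt n\,\delta(\delta+1)^n e^\tau M^\delta .
\]
Write $c_d=\sqrt{1+\sum|\omega_i|^2}\cdot M$ and note $\sqrt{1+\sum|\omega_i|^2}\le M$ (using $d>1$). Since $\delta\ge1$, we have $\log M\le \tfrac12\log(1+d^2n\max|\omega_i|^2)$ and $\log(\sqrt n\,\delta(\delta+1)^n)\le (2n+1)\delta$. Hence $\log(|Q(\omega)|/r)\le (2n+1)\delta+\tau+(\delta+2)\log M\le c_d'(\delta+\tau)$. This uses $\tau\ge1$ to absorb the extra $2\log M$ into $\tau\log M$. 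That gives the contradiction.

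The main obstacle is this final bookkeeping: checking that the polynomial factors in $\delta$, the radius factor $c_d$, and the growth $M^\delta$ of monomials all fit inside the specific constant $c_d'$. The factor $c_d$ is absorbed using $\delta+\tau\ge2$, and $(\delta+1)^n$ using $\log(\delta+1)\le\delta$. If the constant comes out slightly off, I would sharpen the monomial bound using the Euclidean norm of $z$ instead of the max norm.
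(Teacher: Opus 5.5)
Your proof is correct and is essentially the paper's argument. You use \iref{Lemma Dist implies l_2 distance} to get an affine zero $y$ of $Q$ with $\|y-\omega\|_2\le c_dr$, bound $|Q(y)-Q(\omega)|$ by a crude (number of monomials)$\times$(coefficient)$\times$(monomial size)$\times c_dr$ estimate, and absorb everything into $c_d'$ using $c_d\le 1+d^2n\max_i|\omega_i|^2$ and $\tau\ge1$; your bookkeeping matches the paper's.

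The only difference is in the Lipschitz step. You use the mean value inequality along $[y,\omega]$, which needs a bound on $|y_i|$. The paper instead Taylor-expands $Q$ at $\omega$ and uses $\|y-\omega\|_\infty\le c_dr\le1$, so it only needs bounds on $|\omega_i|$. Your bound $\max(1,d\max_i|\omega_i|)$ is the right one. The intermediate claim $\max_i|y_i|\le d\max_i|\omega_i|$ can fail when $b_0$ is the largest homogeneous coordinate of $\boldsymbol y$; in that case one only gets $|y_i|\le 1$, which your $\max(1,\cdot)$ already covers.
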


\begin{proof}
Let $c_d$ be as in \iref{Equation c}:

\[
c_d:=\sqrt{1+\sum_{i=1}^n|\omega_i|^2}\cdot \sqrt{1+d^2\cdot n\cdot{\max_i\{|\omega_i|^2\}}}.
\]

By \iref{Lemma Dist implies l_2 distance}, it suffices to show that $Q^{\mathrm{proj}}$ has no zero in the set
\[
    \left\{ \boldsymbol{y} = (1 : y_1 : \cdots : y_n) \in \mathbb{P}^n(\mathbb{C}) \;\middle| \;y \in \mathbb{C}^n, \;\|\omega-y\|_2 \le c_dr \right\}.
\]
Let $y \in \mathbb{C}^n$ satisfy $\|\omega-y\|_2 \le c_dr$ (so that $\|\omega-y\|_\infty\leq c_dr$ as well). Consider the Taylor expansion of $Q$ at the point $\omega$. We have
\[
    Q(y)-Q(\omega) = \sum_{\substack{\alpha = (\alpha_1, \ldots, \alpha_n) \\ \exists i, \;\alpha_i \ge 1}} q_\alpha \prod_{i=1}^n (y_i - \omega_i)^{\alpha_i},
\]
where
\[
    q_\alpha = \frac1{\alpha!} \left( \frac{\partial}{\partial x} \right)^\alpha Q(\omega).
\]

We now estimate $|q_\alpha|$. Write $Q =: \sum_{\beta} Q_{\beta} x^{\beta}$. Then, by definition,
\[
    \frac1{\alpha!} \left( \frac{\partial}{\partial x} \right)^\alpha Q 
    = \sum_{\substack{\beta=(\beta_1,\ldots,\beta_n) \\ \beta \ge \alpha}} \prod_{i=1}^n \binom{\beta_i}{\alpha_i} Q_{\beta} x^{\beta - \alpha}.
\]
We will now use the estimates
\begin{align*}
    \binom{\beta_i}{\alpha_i} &\le 2^{\beta_i}, \\
    |Q_{\beta}| &\le e^{\tau},
\end{align*}
to deduce the following bound:
\begin{align*}
    |q_\alpha| 
    \le (\delta + 1)^n 2^\delta e^\tau \left(\max_{1 \le i \le n}\{\max(1,|\omega_i|)\} \right)^{\delta}.
\end{align*}
Since $\|x-y\|_\infty \le c_dr\leq1$, we obtain
\begin{align*}
    |Q(y)-Q(\omega)|
        &\leq 
    (\delta + 1)^n |q_\alpha| \cdot c_dr \\
        &\leq
    (\delta+1)^{2n}\cdot2^\delta\cdot e^\tau\cdot\left(\max_{1 \le i \le n}\{\max(1,|\omega_i|)\}\right)^{\delta}\cdot c_dr\\
        &\leq
    e^{c_d'(\delta + \tau)} r,
\end{align*}
which, together with the assumption that $e^{c_d'(\delta + \tau)} r < |Q(\omega)|$, implies that $Q(y)\neq 0$.

Hence, $Q^{\mathrm{proj}}$ has no zero in $\overline{B_{\boldsymbol{x}, \operatorname{Dist}}(r)}$.
\end{proof}

\bibliographystyle{plain}
\bibliography{references}

\end{document}